\theoremstyle{plain}
\newtheorem{thm}{Theorem}[section]
\newtheorem{lem}[thm]{Lemma}
\newtheorem{cor}[thm]{Corollary}
\theoremstyle{remark}
\newtheorem{rem}[thm]{Remark}
\newtheorem{example}[thm]{Example}
\newtheorem{defi}[thm]{Definition}
\newcommand{\inI}{I^{\circ}}
\newcommand{\dint}{{\rm d}}
\newcommand{\N}{{\mathbb N}}
\begin{document}

\begin{frontmatter}
%%%%%%%%%%%%%%%%%%%%%%%%%%%%%%%%%%%%%%%%%%%%%%
%%                                          %%
%% Enter the title of your article here     %%
%%                                          %%
%%%%%%%%%%%%%%%%%%%%%%%%%%%%%%%%%%%%%%%%%%%%%%
\title{Reversibility of elliptical slice sampling revisited}
%\title{A sample article title with some additional note\thanksref{T1}}
\runtitle{Reversibility of elliptical slice sampling revisited}
%\thankstext{T1}{A sample of additional note to the title.}

\begin{aug}
%%%%%%%%%%%%%%%%%%%%%%%%%%%%%%%%%%%%%%%%%%%%%%%
%% ORCID can be inserted by command:         %%
%% \orcid{0000-0000-0000-0000}               %%
%%%%%%%%%%%%%%%%%%%%%%%%%%%%%%%%%%%%%%%%%%%%%%%
\author[A]{\inits{M.H.}\fnms{Mareike		}~\snm{Hasenpflug}\ead[label=e1]{mareike.hasenplfug@uni-passau.de}}
\author[B]{\inits{V.T.}\fnms{Viacheslav}~\snm{Telezhnikov}\ead[label=e2]{viacheslav.telezhnikov@expertanalytics.de}}
\author[A]{\inits{D.R.}\fnms{Daniel}~\snm{Rudolf}\ead[label=e3]{daniel.rudolf@uni-passau.de}}
%%%%%%%%%%%%%%%%%%%%%%%%%%%%%%%%%%%%%%%%%%%%%%
%% Addresses                                %%
%%%%%%%%%%%%%%%%%%%%%%%%%%%%%%%%%%%%%%%%%%%%%%
\address[A]{Faculty of Computer Science and Mathematics, Universit\"at Passau, Innstra\ss e 33, 94032 Passau\printead[presep={,\ }]{e1,e3}}

\address[B]{previously known as Viacheslav Natarovskii, Expert~Analytics~GmbH, Hubertusstra\ss e~83, 82131 Gauting\printead[presep={,\ }]{e2}}
\end{aug}

\begin{abstract}
We extend elliptical slice sampling, a Markov chain transition kernel suggested in \cite{MuAdMa10}, to infinite-dimensional separable Hilbert spaces and discuss its well-definedness.
We point to a regularity requirement, provide an alternative proof of the desirable reversibility property and show that it induces a positive semi-definite Markov operator. Crucial within the proof of the formerly mentioned results is the analysis of a shrinkage Markov chain that may be interesting on its own.
\end{abstract}

\begin{keyword}
\kwd{Elliptical slice sampling}
\kwd{reversibility}
\kwd{shrinkage procedure}
\end{keyword}

\end{frontmatter}

%%%%%%%%%%%%%%%%%%%%%%%%%%%%%%%%%%%%%%%%%%%%%%
%%%% Main text entry area:

\section{Introduction}
\label{sec:introduction}

Markov chain Monte Carlo simulations are one of the major tools for approximate sampling of posterior distributions in the context of Bayesian inference.
Elliptical slice sampling (ESS), which has been proposed in \cite{MuAdMa10}, provides a popular  algorithmic transition mechanism, see e.g. \cite{nishihara2014parallel,murray2016pseudo,lie2021dimension}, that leads to a Markov chain which suits the goal of approximate sampling.

The idea of ESS is based on a particular Gaussian Metropolis random walk, see \cite{neal1999regression}, that is nowadays sometimes called preconditioned Crank-Nicolson Metropolis (see \cite{cotter2013mcmc, rudolf2018generalization}), and the shrinkage procedure also due to Neal, see \cite{neal2003slice}. Given the current state, randomly a suitable acceptance region (a level set) and an ellipse is specified. Afterwards, by using the aforementioned shrinkage procedure, the next instance of the Markov chain is generated on the intersection of the ellipse and the acceptance region. 
		
Computationally of advantage is that 
this intersection is a one-dimensional compact set, where its size varies in a way that the jump distances are in each iteration `self-adapted' to the current location and target distribution. Moreover, those customized `probing regions' guarantee a well adjusted exploration of the state space. Therefore, appreciated features of ESS compared to the Gaussian Metropolis random walk are that there are no rejections, that there is no required tuning of a step-size parameter and that it allows for larger jumps, since a richer choice of possible updates is available, cf. \cite{MuAdMa10}.

From the theory side, recently in \cite{pmlr-v139-natarovskii21a} geometric convergence on finite-dimensional spaces has been proven under weak assumptions on the target distribution.
Moreover, numerical experiments in \cite{MuAdMa10,pmlr-v139-natarovskii21a, gessner2022integrals, ray2020efficient, grenioux23a} indicate dimension independent performance of ESS. That motivates the question of well-defined\-ness
on (possibly) infinite-dimen\-sional separable Hilbert spaces. 
Generally, almost always
the first step in investigating Markov chain sampling  (in new scenarios) is the verification of the correct desired stationary distribution. This is fundamental 
because all convergence results, ranging from plain ergodicity (see e.g.\cite{Ti94}), over convergence in total variation distance (cf. \cite{meyn2009markov}) and Wasserstein distance (cf. \cite[Section 20]{douc2018markov}), to spectral gaps with implications for time averages (see e.g. \cite{Jo04,rudolf2012explicit}), 
rely on the target being the stationary distribution. Therefore we focus on the verification of reversibilty, that implies stationarity and (quite commonly) is the base upon which any further convergence analyses are built on.
%%%%%%%%%%%%%%%%%%%%%%%%%%%%
Our contribution regarding ESS is fourfold:
\begin{enumerate}
	\item The algorithm contains a `shrinkage loop' and we provide a sufficient condition on the distribution of interest for the termination of that loop, which leads to the well-definedness of the transition mechanism and the corresponding Markov chain.
	\item We illuminate that the process on the ellipse actually relies on a Markov chain on $[0,2\pi)$ that is reversible~w.r.t.~the uniform distribution on a suitably transformed acceptance region.  
	\item
	By providing alternative arguments to \cite[Section~2.3]{MuAdMa10} we prove reversibility of the ESS transition kernel in infinite-dimensional settings, which particularly implies that the target measure is a stationary distribution. 
	\item We show that the Markov operator induced by the ESS transition kernel is positive semi-definite on the space of functions that are square integrable with respect to (w.r.t.) the target distribution.
\end{enumerate}
In contrast to the finite-dimensional framework,
for which ESS has been proposed, we consider a possibly infinite-dimensional scenario. 

	For such settings, beyond
	the already mentioned preconditioned Crank-Nicolson Metropolis, including its various modifications (see \cite{cotter2013mcmc, law2014proposal, pinski2015algorithms, rudolf2018generalization}), 
	there exist a variety of other Markov chain Monte Carlo approaches. 
	For example, the Metropolis adjusted Langevin algorithm (see \cite{cotter2013mcmc}), Hamiltonian Monte Carlo sampling (see \cite{beskos2011hybrid, ottobre2016function}) and methods that incorporate the geometry of the posterior distribution (see \cite{cui2016dimension, beskos2017geometric}). Beneficial within the ESS approach may be the fact that no gradient knowledge and no tuning of a step-size parameter is required.

The distribution of interest is specified on an infinite-dimensional separable Hilbert space $\mathcal{H}$ which is equipped with its corresponding Borel $\sigma$-algebra $\mathcal{B}(\mathcal{H})$. We will see 
that ESS is well-defined in such a framework, as already suggested in \cite{lie2021dimension}. We consider $\rho: \mathcal{H} \rightarrow (0, \infty)$ as the likelihood function and a Gaussian reference measure $\mu_0 = \mathcal{N}(0,C)$ defined on $\mathcal{H}$ as the prior distribution, where $C\colon \mathcal{H} \to \mathcal{H}$ is a non-singular covariance operator\footnote{This means that $C\colon \mathcal{H}\to \mathcal{H}$ is a linear bounded, self-adjoint and positive definite trace class operator with ${\rm ker}\, C = \{0\}$.}. 
Then, the probability measure of interest, the posterior/target distribution, denoted by $\mu$, is given as 
\[
\mu({\rm d}x) = \frac{1}{Z}\, \rho(x) \mu_0({\rm d}x)
\]
with normalizing constant $Z=\int_{\mathcal{H}} \rho(x) \mu_0({\rm d}x)$. We consider in the following ESS for approximate sampling of $\mu$ and, in particular, show that if $\rho$ is lower-semicontinuous, i.e., the super level sets of $\rho$ are open sets (within $\mathcal{H}$), then the while-loop in the shrinkage procedure terminates and the transition mechanism leads to a transition kernel that is reversible w.r.t. $\mu$.

%  	Outline
We briefly outline the structure of the paper. At the beginning of Section~\ref{sec: prel_and_not} we provide the general setting and the transition mechanisms in algorithmic form. Then, we motivate with a simple example the issue regarding the termination criterion formulated in the algorithms and   
develop a representation of a transition kernel that corresponds to the shrinkage procedure on the circle. In Section~\ref{sec:shrink} we prove that the aforementioned kernel is reversible w.r.t.~a~suitable uniform distribution on a subset of the circle. Finally, in Section~\ref{sec:rev_ess} we show how the reversibility of the shrinkage carries over to the transition kernel of ESS. There
we also obtain the positive semi-definiteness of the corresponding Markov operator.

%%%%%%%%%%%%%%%%%%%%%%%

\section{Preliminaries and notation}
\label{sec: prel_and_not}

We state two equivalent versions of the transition mechanism of
elliptical slice sampling in algorithmic form and provide our
notation. Let $(\Omega,\mathcal{F},\mathbb{P})$ be the underlying probability space of all subsequently used random variables. On the real line $\mathbb{R}$, equipped with its canonical Borel $\sigma$-algebra $\mathcal{B}(\mathbb{R})$, let $\lambda(\cdot)$ denote the Lebesgue measure. For bounded $I\in \mathcal{B}(\mathbb{R})$, with $\lambda(I)>0$ let $\mathcal{U}_I$ be the uniform distribution on $I$. 
In Algorithm~\ref{alg:ess_ala_Murray} the transition mechanism of elliptical slice sampling, as stated in \cite{MuAdMa10},
is presented. 
\begin{algorithm}[htb]
	\caption{Elliptical slice sampling}
	\label{alg:ess_ala_Murray}
	\textbf{Input:}
	$\rho$ and $x_{\text{in}}\in\mathcal{H}$ considered as current state;\\
	\textbf{Output:} $x_{\text{out}}\in\mathcal{H}$ considered as the next state;\\[-2.5ex] 
	\begin{algorithmic}[1]	
		\STATE Draw $T\sim \mathcal{U}_{(0,\rho(x_{\text{in}}))}$, call the result $t$;
		\STATE Draw $W \sim \mu_0 = \mathcal{N}(0, C)$, call the result $w$;
		\STATE Draw $\Gamma\sim \mathcal{U}_{[0,2\pi)}$, call the result $\gamma$;
		\STATE Set $\gamma^{\min} := \gamma - 2\pi$ and set $\gamma^{\max} := \gamma$;
		\WHILE{$\rho(\cos(\gamma) x_{\text{in}} + \sin(\gamma) w) \leq t$}
		\IF{$\gamma < 0$}
		\STATE Set $\gamma^{\min} := \gamma$;
		\ELSE
		\STATE Set $\gamma^{\max} := \gamma$;
		\ENDIF
		\STATE Draw $\Gamma\sim \mathcal{U}_{(\gamma^{\min}, \gamma^{\max})}$, call the result $\gamma$;
		\ENDWHILE
		\RETURN $x_{\text{out}} := \cos(\gamma) x_{\text{in}} + \sin(\gamma) w$.
	\end{algorithmic}
\end{algorithm}
For the analysis below it is convenient to reformulate and split the transition mechanism of Algorithm~\ref{alg:ess_ala_Murray}. 
For this define
for $t\geq 0$ the (super-) level set of $\rho$ w.r.t. $t$ as
\[
\mathcal{H}(t) := \{ x\in \mathcal{H} \colon \rho(x)>t \}
\] 
and for $\alpha,\beta\in [0,2\pi)$ let
\[
I(\alpha,\beta) := 
\begin{cases}
[0,\beta) \cup [\alpha,2\pi), & \alpha > \beta\\
[\alpha,\beta), & \alpha <\beta\\
[0, 2\pi), & \alpha = \beta,		
\end{cases} \qquad
	J(\alpha,\beta) := 
	\begin{cases}
	[0,\beta) \cup [\alpha,2\pi), & \alpha > \beta\\
	[\alpha,\beta), & \alpha <\beta\\
	\emptyset, & \alpha = \beta	
	\end{cases}
\]
be the notation 
for intervals
	that respect the geometry of the circle.  Observe that 
$I(\alpha,\beta)\cap J(\beta,\alpha)=\emptyset$ and $I(\alpha,\beta)\cup J(\beta,\alpha)=[0,2\pi)$. Moreover, for $\alpha\not=\beta$ we have $I(\alpha,\beta)=J(\alpha,\beta)$.
A useful identity is readily available by distinguishing different cases:
\begin{lem}
	\label{lem: I_interval_prop}
	Let $\alpha, \beta, \gamma \in [0,2\pi)$. If $\gamma \neq \alpha$ we have
	$
	\mathbf{1}_{I(\alpha,\beta)}(\gamma) = \mathbf{1}_{J(\beta,\gamma)}(\alpha).
	$
	Conversely if $\gamma \neq \beta$ we have $
	\mathbf{1}_{J(\alpha,\beta)}(\gamma) = \mathbf{1}_{I(\gamma,\alpha)}(\beta).
	$
\end{lem}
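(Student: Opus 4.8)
The plan is to reduce both identities to elementary inequalities for a single ``counterclockwise distance'' on the circle. I would define $\delta(\alpha,\beta) := \beta-\alpha$ if $\alpha \le \beta$ and $\delta(\alpha,\beta) := \beta-\alpha+2\pi$ if $\alpha>\beta$, so that $\delta(\alpha,\beta) = (\beta-\alpha)\bmod 2\pi \in [0,2\pi)$. Two auxiliary facts drive everything. First, directly from the case definition of $J$, by writing out $\delta(\alpha,\cdot)$ piecewise in each of the three cases $\alpha<\beta$, $\alpha>\beta$, $\alpha=\beta$, one checks that
\[
\gamma \in J(\alpha,\beta) \quad\Longleftrightarrow\quad \delta(\alpha,\gamma) < \delta(\alpha,\beta)
\]
for all $\alpha,\beta,\gamma \in [0,2\pi)$. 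Second, for fixed $\beta$ the map $\gamma \mapsto \delta(\beta,\gamma)$ is a bijection of $[0,2\pi)$ onto itself, so $\delta(\beta,\gamma)=\delta(\beta,\alpha)$ forces $\gamma=\alpha$. I would also use that $\mathbf{1}_{I(\alpha,\beta)}(\gamma) = 1 - \mathbf{1}_{J(\beta,\alpha)}(\gamma)$, which is immediate from the already noted relations $I(\alpha,\beta)\cap J(\beta,\alpha)=\emptyset$ and $I(\alpha,\beta)\cup J(\beta,\alpha)=[0,2\pi)$.

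For the first identity, assume $\gamma\neq\alpha$. Then $\mathbf{1}_{I(\alpha,\beta)}(\gamma)=1$ is equivalent to $\gamma\notin J(\beta,\alpha)$, hence, applying the reformulation to $J(\beta,\alpha)$, to $\delta(\beta,\gamma)\ge\delta(\beta,\alpha)$. Since $\gamma\neq\alpha$ excludes the equality $\delta(\beta,\gamma)=\delta(\beta,\alpha)$, this is equivalent to $\delta(\beta,\alpha)<\delta(\beta,\gamma)$, which, applying the reformulation to $J(\beta,\gamma)$, is exactly $\alpha\in J(\beta,\gamma)$. Hence $\mathbf{1}_{I(\alpha,\beta)}(\gamma)=\mathbf{1}_{J(\beta,\gamma)}(\alpha)$. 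The second identity follows by the same argument with the points permuted: assuming $\gamma\neq\beta$, the condition $\mathbf{1}_{I(\gamma,\alpha)}(\beta)=1$ is equivalent to $\beta\notin J(\alpha,\gamma)$, i.e. to $\delta(\alpha,\beta)\ge\delta(\alpha,\gamma)$, i.e. (using $\gamma\neq\beta$ to exclude equality) to $\delta(\alpha,\gamma)<\delta(\alpha,\beta)$, i.e. to $\gamma\in J(\alpha,\beta)$; thus $\mathbf{1}_{J(\alpha,\beta)}(\gamma)=\mathbf{1}_{I(\gamma,\alpha)}(\beta)$.

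The only step requiring genuine case distinctions is the reformulation $\gamma\in J(\alpha,\beta)\Leftrightarrow\delta(\alpha,\gamma)<\delta(\alpha,\beta)$, and that is routine bookkeeping over the three cases in the definition of $J$; everything afterwards is manipulation of real inequalities. I therefore expect no real obstacle, only the need to be careful at the one place where the hypotheses are used: the passage between the non-strict inequality coming from $I(\alpha,\beta)=[0,2\pi)\setminus J(\beta,\alpha)$ and the strict inequality defining membership in a $J$-interval. The assumption $\gamma\neq\alpha$ (respectively $\gamma\neq\beta$) is exactly what is needed there, which also explains why both identities fail without it. An alternative route is a direct case analysis over the relative cyclic positions of $\alpha$, $\beta$, $\gamma$, but it is longer and hides the symmetry between the two assertions.
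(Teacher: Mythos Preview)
Your proof is correct. The paper itself offers no argument beyond the remark that the identity ``is readily available by distinguishing different cases,'' i.e.\ a brute-force enumeration of the relative positions of $\alpha,\beta,\gamma$. You take a genuinely different route: by introducing the counterclockwise distance $\delta(\alpha,\beta)=(\beta-\alpha)\bmod 2\pi$ and establishing once that $\gamma\in J(\alpha,\beta)\Leftrightarrow\delta(\alpha,\gamma)<\delta(\alpha,\beta)$, you reduce both identities to the same manipulation of a strict real inequality, with the hypotheses $\gamma\neq\alpha$ (resp.\ $\gamma\neq\beta$) entering precisely to convert $\ge$ into $>$ via injectivity of $\delta(\beta,\cdot)$. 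This buys you a clean explanation of why the two assertions are symmetric permutations of one another and isolates exactly where the side conditions are needed; the paper's case distinction is shorter to state but gives no such insight and requires the reader to supply several sub-cases.
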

For given $x,w\in \mathcal{H}$ define the function $p_{x,w}\colon [0,2\pi) \to \mathcal{H}$ as
\[
p_{x,w}(\theta) := \cos(\theta) x + \sin(\theta) w,
\]
which describes an ellipse in $\mathcal{H}$ with conjugate diameters determined by $x,w$. 
We remind the reader on the definition of the pre-image of $p_{x,w}$, which is, for $A\in\mathcal{B}(\mathcal{H})$ 
given as
\[
p_{x,w}^{-1}(A) := \{ \theta\in [0,2\pi) \colon p_{x,w}(\theta )\in A \}.
\] 
It determines the part of $[0,2\pi)$ that leads via $p_{x,w}$ to elements on the ellipse intersected with $A$.  
In the aforementioned reformulation of Algorithm~\ref{alg:ess_ala_Murray} we aim to highlight the structure of the elliptical slice sampling approach. It is given in Algorithm~\ref{alg:ess_ala_ana}, calling Algorithm~\ref{alg:shrink} as a built-in procedure. This procedure gives a transition mechanism on a set $S\in\mathcal{B}([0,2\pi))$. 
\begin{algorithm}[htb]
	\caption{Shrinkage, called as $\text{shrink}(\theta_{\text{in}},S)$}
	\label{alg:shrink}
	\textbf{Input:} $S\in \mathcal{B}([0,2\pi))$, $\theta_{\text{in}}\in S$ considered as current state;\\
	\textbf{Output:} $\theta_{\text{out}}\in S$ considered as next 
	state; \\[-2.5ex]
	\begin{algorithmic}[1]
		\STATE Set $i := 1$ and draw $\Gamma_i\sim \mathcal{U}_{[0,2\pi)}$, call the result $\gamma_i$;
		\STATE Set $\gamma^{\min}_i := \gamma_i$ and $\gamma^{\max}_i := \gamma_i$;
		\WHILE{$\gamma_i \notin S$}
		\IF{$\gamma_i \in J(\gamma^{\min}_i, \theta_{\text{in}})$}
		\STATE Set $\gamma^{\min}_{i+1} := \gamma_i$ and $\gamma^{\max}_{i+1} := \gamma^{\max}_{i}$;
		\ELSE
		\STATE Set $\gamma^{\min}_{i+1} := \gamma^{\min}_{i}$ and $\gamma^{\max}_{i+1} := \gamma_{i}$;
		\ENDIF
		\STATE Draw $\Gamma_{i+1}\sim \mathcal{U}_{I(\gamma_{i+1}^{\min},\gamma^{\max}_{i+1})}$, call the result $\gamma_{i+1}$;
		\STATE Set $i := i + 1$;
		\ENDWHILE
		\STATE \textbf{return} $\theta_{\text{out}} := \gamma_i$.
	\end{algorithmic}
\end{algorithm}
\begin{algorithm}[htb]
	\caption{Reformulated Elliptical slice sampling}
	\label{alg:ess_ala_ana}
	\textbf{Input:} $\rho$ and $x_{\text{in}}\in\mathcal{H}$ considered as current state;\\
	\textbf{Output:} $x_{\text{out}}\in\mathcal{H}$ considered as 
	next state;\\[-2.5ex] 
	\begin{algorithmic}[1]	
		\STATE Draw $T\sim \mathcal{U}_{(0,\rho(x_{\text{in}}))}$, call the result $t$;
		\STATE Draw $W \sim \mu_0 = \mathcal{N}(0, C)$, call the result $w$;
		\STATE Set $\gamma := \text{shrink}(0,p^{-1}_{x_{\text{in}},w}(\mathcal{H}(t)))$; \qquad (Algorithm~\ref{alg:shrink})
		\RETURN $x_{\text{out}} := \cos(\gamma) x_{\text{in}} + \sin(\gamma) w$.
	\end{algorithmic}
\end{algorithm}
% Transition mechanism ala Murray and ala analysis are the same:

Comparing Algorithm~\ref{alg:ess_ala_Murray} and Algorithm~\ref{alg:ess_ala_ana} one observes that line~1, line~2 and the return-line coincide (after $\gamma$ has been computed). Given realizations $t$ and $w$ line~3 until line~12 of Algorithm~\ref{alg:ess_ala_Murray}, including the while-loop, correspond to calling the shrinkage procedure of Algorithm~\ref{alg:shrink} within Algorithm~\ref{alg:ess_ala_ana} with input $\theta_{\text{in}}=0$ and $S=p_{x_{\text{in}},w}^{-1}(\mathcal{H}(t))$. For convincing yourself that those parts also coincide note that
\[
p_{x_{\text{in}},w}^{-1}(\mathcal{H}(t)) = \{ \theta\in[0,2\pi)\colon \rho(\cos(\theta)x_{\text{in}}+\sin(\theta)w)>t \}
\]
and therefore the termination criterion in the while-loops remains the same. In the first iteration both algorithms draw the angles always uniformly distributed on an interval of length $2\pi$. Moreover, there and in general,
the $2\pi$-periodicity of the function $p_{x_{\text{in}},w}$ is exploited in the construction of the shrinked intervals in the while-loop in Algorithm~\ref{alg:ess_ala_Murray}, whereas in Algorithm~\ref{alg:shrink} we work with the generalized intervals {$I(\alpha,\beta)$ and $J(\alpha,\beta)$} for given $\alpha,\beta\in [0,2\pi)$. 
To finally convince yourself that indeed the same transitions are performed it is useful to specify how one samples uniformly distributed in the generalized intervals. 
Namely, for $\alpha< \beta$, just sample uniformly distributed in $[\alpha,\beta)$ to get a realization w.r.t. $\mathcal{U}_{I(\alpha,\beta)}$.
For $\alpha\geq\beta$ and uniform sampling in $I(\alpha,\beta)$, draw $V\sim\mathcal{U}_{[\alpha-2\pi,\beta)}$ with the result $v$ and set the output as $v+2\pi$ if $v\in [\alpha-2\pi,0)$ and $v$ otherwise.
Employing this procedure for realizing $\mathcal{U}_{I(\alpha,\beta)}$ in Algorithm~\ref{alg:shrink}, driven by the same random numbers as the interval sampling in Algorithm~\ref{alg:ess_ala_Murray}, yields finally the same transitions and angles\footnote{The angles and shrinked intervals coincide up to transformation to $[0,2\pi)$.}.
Notice that the suggested representation of ESS in terms of Algorithm~\ref{alg:shrink} sheds light into a new perspective about the transition mechansim, which is valuable rather for theoretical than practical purposes. For example, taking it literally it is of course computational infeasible to access $S=p^{-1}_{x_{\text{in}}}(\mathcal{H}(t))$, whereas in Algorithm~\ref{alg:ess_ala_Murray} a membership oracle w.r.t $p^{-1}_{x_{\text{in}}}(\mathcal{H}(t))$ is sufficient.

\subsection{
	Properties and notation of the shrinkage procedure
}

% Problem bzgl. terminierung der SChleife anhand von Beispiel
The shrinkage procedure of Algorithm~\ref{alg:shrink} (and Algorithm~\ref{alg:ess_ala_Murray}) is only well-defined if the while-loop terminates. In particular, if $\lambda(S)=0$, then for any $I(\alpha,\beta)$, with $\alpha,\beta\in [0,2\pi)$, and $V\sim\mathcal{U}_{I(\alpha,\beta)}$ we have
\begin{equation}
\label{eq: prob_to_be_in_S}
\mathbb{P}(V\in S) = \mathcal{U}_{I(\alpha,\beta)}(S) = \frac{\lambda(S\cap I(\alpha,\beta))}{\lambda(I(\alpha,\beta))} = 0.
\end{equation}
Consequently, for an input $S\in \mathcal{B}([0,2\pi))$ with $\lambda(S)=0$ the shrinkage procedure of Algorithm~\ref{alg:shrink} does not terminate almost surely, since in line~9 there one chooses uniformly distributed in a suitable generalized interval and by \eqref{eq: prob_to_be_in_S} the probability to be in $S$ is zero.
With the following illustrating example we illuminate the well-definedness problem in terms of Algorithm~\ref{alg:ess_ala_ana} with a toy scenario.
\begin{example}
	For $d\in\mathbb{N}$ consider $\mathcal{H}=\mathbb{R}^d$ and let $\varepsilon>0$ as well as  $\mu=\mathcal{N}(0,I)$ be the standard normal distribution in $\mathbb{R}^d$ with $I\in\mathbb{R}^{d\times d}$ being the identity matrix. Moreover, let $\rho \colon \mathbb{R}^d \to [\varepsilon,1+\varepsilon]$ be given as
	\[
	\rho(x) = \mathbf{1}_{[0,1]^d}(x) + \varepsilon,\qquad x\in \mathbb{R}^d.
	\]
	Observe that $\mathcal{H}(t)=[0,1]^d$ for $t> \varepsilon$. We see that the fact that this is a closed set might lead (for certain inputs) to a well-definedness issue. 
	For 

	\[w\in\big\{(\widetilde{w}^{(1)},\dots,\widetilde{w}^{(d)})\in\mathbb{R}^d\colon
	\exists i,j\in \{1,\dots,d\} \;\text{s.t.}\;
	\widetilde{w}^{(i)}<0,\;\widetilde{w}^{(j)}>0
	\big\}\]
	we have
	\begin{align*}
	& p_{0,w}([0,2\pi))  =\{\widetilde{w}\in \mathbb{R}^d\colon \widetilde{w}=s w,\; s\in [-1,1]\},\qquad
	 p_{0,w}([0,2\pi)) \cap \mathcal{H}(t)  = \{0\}\subset \mathbb{R}^d,
	\end{align*}
	such that $p_{0,w}^{-1}(\mathcal{H}(t))=\{0\}\subset [0,2\pi)$. For the random variables $T$ and $W$ as in Algorithm~\ref{alg:ess_ala_ana} we obtain that
	\[
	\mathbb{P}\left(\lambda(p^{-1}_{0,W}(\mathcal{H}(T)))=0\right) = \frac{2^d-2}{2^d(1+\varepsilon)}.
	\]
	Thus, for input $x_{\text{in}}=0$ and $\rho$, with the former probability the while-loop in the shrinkage procedure does not terminate.
\end{example}
In the following we introduce the mathematical objects to formulate sufficient conditions for guaranteeing an almost sure termination of the aforementioned while-loops and a desired reversibility property of the shrinkage procedure.

We start with some notation. For probability measures $\mu,\nu$ defined on possibly different measurable spaces the corresponding product measure on the Cartesian product space is denoted as $\mu\otimes\nu$. Moreover, for the Dirac measure at $v$ (on an arbitrary measurable space) we write $\delta_{v}(\cdot)$. Having two random variables/vectors $X,Y$ we denote the distribution of $X$ as $\mathbb{P}_X$ and the conditional distribution of $X$ given $Y$ as $\mathbb{P}_{X\mid Y}$.

Fix $S\in \mathcal{B}([0,2\pi))$ and let $\theta\in S$ with $\theta_{\text{in}}=\theta$. Define
\begin{align*}
\Lambda &:= \{(\gamma,\gamma^{\min},\gamma^{\max})\in [0,2\pi)^3 
\colon \gamma \in I(\gamma^{\min},\gamma^{\max})\},\\
\Lambda_\theta & := \{ (\gamma,\gamma^{\min},\gamma^{\max})\in [0,2\pi)^3 \colon \gamma,\theta\in I(\gamma^{\min},\gamma^{\max}),  \gamma \neq \theta \}.
\end{align*}
Considering $z_1=(\gamma_1,\gamma_1^{\min},\gamma_1^{\max})$ from Algorithm~\ref{alg:shrink} as a realization of a random vector $Z_1=(\Gamma_1,\Gamma^{\min}_1,\Gamma^{\max}_1)$ on $([0,2\pi)^3,\mathcal{B}([0,2\pi)^3)$, we have by line 1-2 of the aforementioned procedure that the distribution of $Z_1$ is given by
\begin{equation}
\label{eq: init_distr}
\mathbb{P}_{Z_1}(C) = \int_0^{2\pi}
\delta_{(\gamma,\gamma,\gamma)}(C)\,	 \frac{{\rm d}\gamma}{2\pi}, \qquad C\in \mathcal{B}([0,2\pi)^3).
\end{equation}
Assume that $\Theta$ is a random variable
mapping to $S$ with distribution $\mathcal{U}_S$ and consider $\theta\in S$ with $\theta=\theta_{\rm in}$ as a realization of $\Theta$. For convenience we refer to $\Theta$ also as anchor (random variable).
{Given $\Theta=\theta$, note that $\Gamma_1\in I(\Gamma_1^{\min},\Gamma_1^{\max})$, $\theta\in S \subseteq I(\Gamma_1^{\min},\Gamma_1^{\max})$ and $\Gamma_1 \neq \theta$ almost surely, such that $Z_1(\omega)\in \Lambda_\theta$ for almost all $\omega \in \Omega$.}
Moreover, given $\Theta=\theta$ the sequence $(z_n)_{n\in\mathbb{N}}$, with $z_n = (\gamma_n,\gamma_n^{\min},\gamma_n^{\max})\in [0,2\pi)^3$, from iterating over lines 4-9 (ignoring the stopping criterion in the while loop) of Algorithm~\ref{alg:shrink} is a realization of a sequence of random variables $(Z_n)_{n\in\mathbb{N}}$ with $Z_n=(\Gamma_n,\Gamma_n^{\min},\Gamma_n^{\max})$. For illustrative purposes we provide
the dependency graph of $(Z_n)_{n\in\mathbb{N}}$ conditioned on $\Theta=\theta$ in Figure~\ref{fig}.
\begin{figure}[h]
	\begin{center}
		\includegraphics{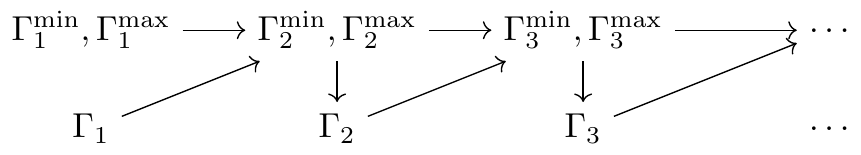}
	\end{center}
	\caption{The dependency graph of the $(Z_n)_{n\in\mathbb{N}}$ conditioned on $\Theta=\theta$.}
	\label{fig}
\end{figure}

Moreover, we call $(Z_n)_{n \in \mathbb{N}}$ the unstopped shrinkage sequence. Conditioned on the anchor $\Theta$ the unstopped shrinkage sequence satisfies the Markov property, i.e.,
	\begin{align}
	\label{al: MP}
	\mathbb{P}_{Z_{n+1} \mid Z_1,\dots,Z_n,\Theta}(C)
	= & \mathbb{P}_{Z_{n+1}\mid Z_n,\Theta}(C), \qquad  C\in \mathcal{B}([0,2\pi)^3).
	\end{align}
From the algorithmic description (cf. Algorithm~\ref{alg:shrink}), for $A\in \mathcal{B}([0,2\pi)), B\in\mathcal{B}([0,2\pi)^2)$, one can read off the following conditional distribution properties
\begin{align}
\label{al: gamma_given_interval}
&\mathbb{P}_{\Gamma_{n+1}\mid \Gamma_{n+1}^{\min},\Gamma_{n+1}^{\max},{Z_n,\dots,Z_1,} \Theta} (A)  =
\mathbb{P}_{\Gamma_{n+1}\mid \Gamma_{n+1}^{\min},\Gamma_{n+1}^{\max}} (A)
= \mathcal{U}_{I(\Gamma_{n+1}^{\min},\Gamma_{n+1}^{\max})}(A),\\
\label{al: interval_given_prev} 
&\mathbb{P}_{\Gamma_{n+1}^{\min},\Gamma_{n+1}^{\max}\mid Z_{n},\Theta}(B)  = 
\mathbf{1}_{J(\Gamma_{n}^{\min},\Theta)}(\Gamma_{n}) 
\delta_{(\Gamma_{n},\Gamma_{n}^{\max})}(B) + 
\mathbf{1}_{I(\Theta,\Gamma_{n}^{\max})}(\Gamma_{n}) 
\delta_{(\Gamma_{n}^{\min},\Gamma_{n})}(B).
\end{align}  
From \eqref{al: gamma_given_interval} 
we deduce $\mathbb{P}_{\Gamma_{n+1}\mid \Gamma_{n+1}^{\min},\Gamma_{n+1}^{\max},Z_n,\dots,Z_1,\Theta}=\mathbb{P}_{\Gamma_{n+1}\mid \Gamma_{n+1}^{\min},\Gamma_{n+1}^{\max},Z_n,\Theta}$, such that by \eqref{al: interval_given_prev} the right-hand side of \eqref{al: MP} can be represented as
\begin{align*}
\mathbb{P}_{Z_{n+1}\mid Z_n,\Theta} (A\times B) & = \int_B \mathbb{P}_{\Gamma_{n+1}\mid \Gamma_{n+1}^{\min} = \gamma^{\min} ,\Gamma_{n+1}^{\max}=\gamma^{\max},Z_n, \Theta} (A) \, \mathbb{P}_{\Gamma_{n+1}^{\min},\Gamma_{n+1}^{\max}\mid Z_{n},\Theta}({\rm d}\gamma^{\min} {\rm d} \gamma^{\max})\\
& = \mathbf{1}_{J(\Gamma_{n}^{\min},\Theta)}(\Gamma_{n}) \int_B \mathcal{U}_{I(\gamma^{\min},\gamma^{\max})}(A) \delta_{(\Gamma_{n},\Gamma_{n}^{\max})}({\rm d}\gamma^{\min} {\rm d} \gamma^{\max}) \\
&  \qquad  +
\mathbf{1}_{I(\Theta,\Gamma_n^{\max})}(\Gamma_n)  \int_B \mathcal{U}_{I(\gamma^{\min},\gamma^{\max})}(A)
\delta_{(\Gamma_n^{\min},\Gamma_n)}({\rm d} \gamma^{\min}{\rm d}\gamma^{\max}).
\end{align*}
We can rewrite this in terms of a transition kernel. Given $\Theta=\theta$ and current state 
$z=(\gamma,\gamma^{\min},\gamma^{\max})\in \Lambda_\theta$ we define a transition kernel $R_\theta$ on $\Lambda_\theta \times \mathcal{B}([0,2\pi)^3)$ by
\begin{align*}
& R_{\theta}((\gamma,\gamma^{\min},\gamma^{\max}), C) \;:= \mathbb{P}_{Z_{n+1}\mid Z_n=(\gamma,\gamma^{\min},\gamma^{\max}),\Theta=\theta} (C) \\
& =\mathbf{1}_{J(\gamma^{\min},\theta)}(\gamma) \int_C \mathcal{U}_{I(\alpha^{\min},\alpha^{\max})}({\rm d}\alpha) \delta_{\gamma}({\rm d} \alpha^{\min}) \delta_{\gamma^{\max}}({\rm d} \alpha^{\max}) \\
&  \qquad  +
\mathbf{1}_{I(\theta,\gamma^{\max})}(\gamma)  \int_C \mathcal{U}_{I(\alpha^{\min},\alpha^{\max})}({\rm d}\alpha)
\delta_{\gamma^{\min}}({\rm d} \alpha^{\min}) \delta_{\gamma}({\rm d}\alpha^{\max}), \quad\; C\in \mathcal{B}([0,2\pi)^3).
\end{align*}
We note the following properties:
\begin{lem}  \label{lem: repres_R_theta}

	For $\theta\in S$, for any $z=(\gamma,\gamma^{\min},\gamma^{\max})\in \Lambda_\theta$ and any $C\in \mathcal{B}([0,2\pi)^3)$
		the transition kernel $R_\theta$ of the unstopped shrinkage sequence conditioned on the anchor $\Theta=\theta$ satisfies 
		\begin{align*}
		& R_\theta((\gamma,\gamma^{\min},\gamma^{\max}), C) \\
		& = \mathbf{1}_{I(\gamma,\gamma^{\min})}(\theta) \cdot \mathcal{U}_{I(\gamma,\gamma^{\max})} \otimes 
		\delta_{(\gamma,\gamma^{\max})}(C) + \mathbf{1}_{J(\gamma^{\max},\gamma)}(\theta) \cdot \mathcal{U}_{I(\gamma^{\min},\gamma)} \otimes 
		\delta_{(\gamma^{\min},\gamma)}(C)\\
		& = \mathbf{1}_{I(\gamma,\gamma^{\max})}(\theta) \cdot \mathcal{U}_{I(\gamma,\gamma^{\max})} \otimes 
		\delta_{(\gamma,\gamma^{\max})}(C) + \mathbf{1}_{J(\gamma^{\min},\gamma)}(\theta) \cdot \mathcal{U}_{I(\gamma^{\min},\gamma)} \otimes 
		\delta_{(\gamma^{\min},\gamma)}(C),
		\end{align*}
		which reflects the case distinction of Algorithm~\ref{alg:shrink}.
		Moreover, we have $R_\theta((\gamma,\gamma^{\min},\gamma^{\max}),\Lambda_\theta)=1$.
\end{lem}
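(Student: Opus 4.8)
The plan is to start from the defining formula for $R_\theta((\gamma,\gamma^{\min},\gamma^{\max}),C)$ and rewrite the two indicator factors $\mathbf{1}_{J(\gamma^{\min},\theta)}(\gamma)$ and $\mathbf{1}_{I(\theta,\gamma^{\max})}(\gamma)$ as indicators evaluated at $\theta$ rather than at $\gamma$, using the interval-swap identities of Lemma~\ref{lem: I_interval_prop}. Concretely, since $z=(\gamma,\gamma^{\min},\gamma^{\max})\in\Lambda_\theta$ we have $\gamma\neq\theta$, and also $\gamma\in I(\gamma^{\min},\gamma^{\max})$, so the relevant arguments are distinct and the lemma applies: $\mathbf{1}_{J(\gamma^{\min},\theta)}(\gamma)=\mathbf{1}_{I(\gamma,\gamma^{\min})}(\theta)$ (second identity of Lemma~\ref{lem: I_interval_prop} with the substitution $\alpha\leftarrow\gamma^{\min}$, $\beta\leftarrow\theta$, $\gamma\leftarrow\gamma$, valid because $\gamma\neq\theta$), and similarly $\mathbf{1}_{I(\theta,\gamma^{\max})}(\gamma)=\mathbf{1}_{J(\gamma^{\max},\gamma)}(\theta)$ (first identity, using $\gamma\neq\theta$). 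Substituting the Dirac masses $\delta_\gamma({\rm d}\alpha^{\min})\delta_{\gamma^{\max}}({\rm d}\alpha^{\max})$ into the first integral forces $\alpha^{\min}=\gamma$, $\alpha^{\max}=\gamma^{\max}$, so $\mathcal{U}_{I(\alpha^{\min},\alpha^{\max})}$ becomes $\mathcal{U}_{I(\gamma,\gamma^{\max})}$ and the integral is $\mathcal{U}_{I(\gamma,\gamma^{\max})}\otimes\delta_{(\gamma,\gamma^{\max})}(C)$; the second integral collapses analogously to $\mathcal{U}_{I(\gamma^{\min},\gamma)}\otimes\delta_{(\gamma^{\min},\gamma)}(C)$. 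This yields the first displayed equality.

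For the second displayed equality I would show the two indicator pairs partition the same event, i.e. that on $\Lambda_\theta$ one has $\mathbf{1}_{I(\gamma,\gamma^{\min})}(\theta)=\mathbf{1}_{I(\gamma,\gamma^{\max})}(\theta)$ on the set where the accompanying Dirac term is active, and likewise $\mathbf{1}_{J(\gamma^{\max},\gamma)}(\theta)=\mathbf{1}_{J(\gamma^{\min},\gamma)}(\theta)$. The cleaner route is to argue directly: because $z\in\Lambda_\theta$ means $\theta\in I(\gamma^{\min},\gamma^{\max})$ with $\gamma\in I(\gamma^{\min},\gamma^{\max})$ as well, the circle arc $I(\gamma^{\min},\gamma^{\max})$ is split by $\gamma$ into the two sub-arcs $I(\gamma^{\min},\gamma)$ and $I(\gamma,\gamma^{\max})$ (up to the endpoint $\gamma$), and $\theta\neq\gamma$ lies in exactly one of them; so $\mathbf{1}_{I(\gamma,\gamma^{\min})}(\theta)$ and $\mathbf{1}_{I(\gamma,\gamma^{\max})}(\theta)$ are the two complementary indicators of that dichotomy, and the same for the $J$-versions, hence both representations describe the identical case split — $\theta$ on the arc from $\gamma$ to $\gamma^{\max}$ versus $\theta$ on the arc from $\gamma^{\min}$ to $\gamma$. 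I will spell this out by the same case distinction ($\gamma^{\min}\le\gamma^{\max}$ versus $\gamma^{\min}>\gamma^{\max}$, and the position of $\gamma$) that underlies Lemma~\ref{lem: I_interval_prop}.

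Finally, for $R_\theta((\gamma,\gamma^{\min},\gamma^{\max}),\Lambda_\theta)=1$: from either representation, $R_\theta(z,\cdot)$ is a mixture (with weights summing to $1$, by the dichotomy just established) of the two probability measures $\mathcal{U}_{I(\gamma,\gamma^{\max})}\otimes\delta_{(\gamma,\gamma^{\max})}$ and $\mathcal{U}_{I(\gamma^{\min},\gamma)}\otimes\delta_{(\gamma^{\min},\gamma)}$, so $R_\theta(z,[0,2\pi)^3)=1$; it remains to check the mixture components are supported in $\Lambda_\theta$. For the first component a point $(\alpha,\gamma,\gamma^{\max})$ with $\alpha\in I(\gamma,\gamma^{\max})$ satisfies $\alpha\in I(\gamma,\gamma^{\max})$ by construction and, on the event $\mathbf{1}_{I(\gamma,\gamma^{\max})}(\theta)=1$, also $\theta\in I(\gamma,\gamma^{\max})$; moreover $\alpha\neq\theta$ $\mathcal{U}_{I(\gamma,\gamma^{\max})}$-a.s. since $\{\theta\}$ is a null set — so the component sits in $\Lambda_\theta$ almost surely, and symmetrically for the second. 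I expect the main obstacle to be purely bookkeeping: correctly tracking the three-way geometry of the generalized intervals $I(\cdot,\cdot)$, $J(\cdot,\cdot)$ on the circle so that the endpoint conventions (half-open intervals, the role of $0$) line up and the two indicator representations genuinely coincide rather than merely agreeing a.e.; invoking Lemma~\ref{lem: I_interval_prop} at each step keeps this under control provided one is careful that the "$\neq$" hypotheses needed there are guaranteed by membership in $\Lambda_\theta$.
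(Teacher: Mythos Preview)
Your proposal is correct and follows essentially the same approach as the paper: apply Lemma~\ref{lem: I_interval_prop} (using $\gamma\neq\theta$ from $\Lambda_\theta$) for the first equality, use the constraint $\theta\in I(\gamma^{\min},\gamma^{\max})$ together with the disjoint decomposition $I(\gamma^{\min},\gamma^{\max})=J(\gamma^{\min},\gamma)\cup I(\gamma,\gamma^{\max})$ for the second, and verify each mixture component has full mass on $\Lambda_\theta$. The paper's proof is terser---it compresses your second-equality argument into the single phrase ``taking $z\in\Lambda_\theta$, in particular $\theta\in I(\gamma^{\min},\gamma^{\max})$, into account'' and computes $\mathcal{U}_{I(\gamma,\gamma^{\max})}\otimes\delta_{(\gamma,\gamma^{\max})}(\Lambda_\theta)=\mathbf{1}_{I(\gamma,\gamma^{\max})}(\theta)$ directly rather than phrasing it as a support statement---but the content is the same.
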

\begin{proof}
	The first equality follows by Lemma~\ref{lem: I_interval_prop} and the second equality by taking $z\in \Lambda_\theta$, in particular, $\theta\in I(\gamma^{\min},\gamma^{\max})$ into account. 
	It remains to show $R_\theta((\gamma,\gamma^{\min},\gamma^{\max}),\Lambda_\theta)=1$.
	Note that
	\begin{align*}
	\mathcal{U}_{I(\gamma,\gamma^{\max})} \otimes 
	\delta_{(\gamma,\gamma^{\max})}(\Lambda_\theta)  
	=  \mathbf{1}_{I(\gamma,\gamma^{\max})}(\theta) \cdot \mathcal{U}_{I(\gamma,\gamma^{\max})} \left(I(\gamma,\gamma^{\max})\setminus\{\theta\}\right)
	= \mathbf{1}_{I(\gamma,\gamma^{\max})}(\theta) 
	\end{align*}
	and by the same arguments
	$
	\mathcal{U}_{I(\gamma^{\min},\gamma)} \otimes 
	\delta_{(\gamma^{\min},\gamma)}(\Lambda_\theta) = 	\mathbf{1}_{I(\gamma^{\min},\gamma)}(\theta).
	$
	Since $\gamma\in I(\gamma^{\min},\gamma^{\max})$ we have 
	\[
	I(\gamma^{\min},\gamma^{\max}) = J(\gamma^{\min},\gamma)\cup I(\gamma,\gamma^{\max})
	\quad \text{and} \quad 
	J(\gamma^{\min},\gamma)\cap I(\gamma,\gamma^{\max})=\emptyset,
	\]
	such that 
	\begin{align*}
	R_\theta(z, \Lambda_\theta) & = \mathbf{1}_{I(\gamma,\gamma^{\max})}(\theta)  + \mathbf{1}_{J(\gamma^{\min},\gamma) \cap I(\gamma^{\min},\gamma) }(\theta)
	= \mathbf{1}_{I(\gamma,\gamma^{\max})}(\theta)  + \mathbf{1}_{J(\gamma^{\min},\gamma)  }(\theta) = 1.
	\qedhere
	\end{align*}
\end{proof}
A useful representation of the transition kernel in terms of random variables follows readily from the previous lemma.
\begin{lem} 
	\label{lem: kernel_as_expect}
	For any $\theta\in S$, for any $z\in \Lambda_\theta$, any $C\in \mathcal{B}([0,2\pi)^3)$ and any $n\geq 2$ the transition kernel $R_\theta$ of the unstopped shrinkage sequence allows the representation as conditional expectation of the previous iterates and the anchor $\Theta=\theta$ as
	\begin{align}
	\label{eq: kernel_as_expect}
	& R_\theta(z, C) 
	= \mathbb{E}[\mathbf{1}_{I(\Gamma_n^{\min},\Gamma_n^{\max})}(\theta)\, \mathcal{U}_{I(\Gamma_n^{\min},\Gamma_n^{\max})}\otimes \delta_{(\Gamma_n^{\min},\Gamma_n^{\max})}(C)\mid Z_{n-1}=z,\Theta = \theta].
	\end{align}
\end{lem}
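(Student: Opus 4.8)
The plan is to unfold the definition of $R_\theta$ as a one-step transition of the unstopped shrinkage sequence, namely $R_\theta(z,C) = \mathbb{P}_{Z_{n+1}\mid Z_n = z, \Theta = \theta}(C)$ for any $z\in\Lambda_\theta$ (this is how $R_\theta$ was introduced just before Lemma~\ref{lem: repres_R_theta}), and to rewrite the right-hand side of \eqref{eq: kernel_as_expect} by conditioning on $Z_n$ and then applying the tower property. First I would observe that, conditioned on $\Theta=\theta$ and $Z_{n-1}=z\in\Lambda_\theta$, the variable $Z_n = (\Gamma_n,\Gamma_n^{\min},\Gamma_n^{\max})$ is distributed according to $R_\theta(z,\cdot)$, which by Lemma~\ref{lem: repres_R_theta} is supported on $\Lambda_\theta$; in particular $\theta\in I(\Gamma_n^{\min},\Gamma_n^{\max})$ almost surely, so the indicator $\mathbf{1}_{I(\Gamma_n^{\min},\Gamma_n^{\max})}(\theta)$ equals $1$ on the relevant event and can be inserted or dropped freely.

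The key computation is then to identify the integrand in \eqref{eq: kernel_as_expect} as a conditional law of $Z_{n+1}$. Fixing the value $(\gamma_n,\gamma_n^{\min},\gamma_n^{\max})$ of $Z_n$, I would show that
\[
\mathcal{U}_{I(\gamma_n^{\min},\gamma_n^{\max})}\otimes\delta_{(\gamma_n^{\min},\gamma_n^{\max})}(C)
= \mathbb{P}_{Z_{n+1}\mid Z_n = (\gamma_n,\gamma_n^{\min},\gamma_n^{\max}),\Theta=\theta}(C)
\]
is, up to the case distinction, exactly the statement that $Z_{n+1}$ has first coordinate uniform on $I(\Gamma_{n+1}^{\min},\Gamma_{n+1}^{\max})$ and second/third coordinates degenerate; here one must check that for $Z_n\in\Lambda_\theta$ the updated pair $(\Gamma_{n+1}^{\min},\Gamma_{n+1}^{\max})$ produced by \eqref{al: interval_given_prev} again satisfies $I(\Gamma_{n+1}^{\min},\Gamma_{n+1}^{\max}) = I(\gamma_n^{\min},\gamma_n^{\max})$ on each branch of the case distinction — this is precisely the content encoded in the two equivalent forms given in Lemma~\ref{lem: repres_R_theta}. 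Once this identification is in place, the expression inside the expectation in \eqref{eq: kernel_as_expect} is $\mathbb{P}_{Z_{n+1}\mid Z_n,\Theta=\theta}(C)$, a measurable function of $Z_n$, and the tower property together with the Markov property \eqref{al: MP} gives
\[
\mathbb{E}\bigl[\mathbb{P}_{Z_{n+1}\mid Z_n,\Theta=\theta}(C)\mid Z_{n-1}=z,\Theta=\theta\bigr]
= \mathbb{P}_{Z_{n+1}\mid Z_{n-1}=z,\Theta=\theta}(C),
\]
and since the chain is time-homogeneous this last quantity equals $R_\theta(z,C)$, closing the argument.

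The only genuinely delicate point is bookkeeping: making sure that the almost-sure membership $Z_n\in\Lambda_\theta$ (so that the $\mathbf{1}_{I(\Gamma_n^{\min},\Gamma_n^{\max})}(\theta)$ factor is harmless and the integrand is well-defined as a transition kernel on $\Lambda_\theta$) is correctly invoked, and that the two representations of $R_\theta$ from Lemma~\ref{lem: repres_R_theta} are matched to the two branches of \eqref{al: interval_given_prev} so that the interval $I(\gamma^{\min},\gamma^{\max})$ is genuinely invariant under one shrinkage step. Everything else is a routine application of the tower property and of the Markov property \eqref{al: MP}; I do not anticipate any analytic difficulty, only the need to keep the case distinction and the conditioning hierarchy straight. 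I would present the proof as: (i) recall $R_\theta(z,\cdot) = \mathbb{P}_{Z_{n+1}\mid Z_n=z,\Theta=\theta}$ and that it is supported on $\Lambda_\theta$; (ii) rewrite $R_\theta(z,C) = \mathbb{E}[\mathbb{P}_{Z_{n+1}\mid Z_n,\Theta=\theta}(C)\mid Z_{n-1}=z,\Theta=\theta]$ via the tower property and \eqref{al: MP}; (iii) identify $\mathbb{P}_{Z_{n+1}\mid Z_n,\Theta=\theta}(C)$ with $\mathbf{1}_{I(\Gamma_n^{\min},\Gamma_n^{\max})}(\theta)\,\mathcal{U}_{I(\Gamma_n^{\min},\Gamma_n^{\max})}\otimes\delta_{(\Gamma_n^{\min},\Gamma_n^{\max})}(C)$ using Lemma~\ref{lem: repres_R_theta}.
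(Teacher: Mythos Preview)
Your proposal contains a genuine indexing error that breaks the argument. The integrand in \eqref{eq: kernel_as_expect} is \emph{not} the conditional law of $Z_{n+1}$ given $Z_n$: it involves only $(\Gamma_n^{\min},\Gamma_n^{\max})$, and by \eqref{al: interval_given_prev} this pair is a \emph{deterministic} function of $(Z_{n-1},\Theta)$. Consequently the conditional expectation on the right of \eqref{eq: kernel_as_expect} is just evaluation, and one matches the two cases of \eqref{al: interval_given_prev} directly against the second representation in Lemma~\ref{lem: repres_R_theta}. This is the ``follows readily'' that the paper refers to; no tower property or two-step transition is needed.

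Two of your intermediate claims are in fact false. First, the identity $I(\Gamma_{n+1}^{\min},\Gamma_{n+1}^{\max}) = I(\gamma_n^{\min},\gamma_n^{\max})$ does \emph{not} hold: the whole point of the shrinkage step is that the interval strictly shrinks (to $I(\gamma_n,\gamma_n^{\max})$ or $I(\gamma_n^{\min},\gamma_n)$), so your displayed identification $\mathcal{U}_{I(\gamma_n^{\min},\gamma_n^{\max})}\otimes\delta_{(\gamma_n^{\min},\gamma_n^{\max})}(C) = \mathbb{P}_{Z_{n+1}\mid Z_n=(\gamma_n,\gamma_n^{\min},\gamma_n^{\max}),\Theta=\theta}(C)$ fails. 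Second, time-homogeneity does \emph{not} give $\mathbb{P}_{Z_{n+1}\mid Z_{n-1}=z,\Theta=\theta}(C)=R_\theta(z,C)$; the left side is the two-step kernel $R_\theta^2(z,C)$. The fix is to drop the detour through $Z_{n+1}$ entirely: observe that $(\Gamma_n^{\min},\Gamma_n^{\max})$ is $\sigma(Z_{n-1},\Theta)$-measurable, evaluate the integrand in the two branches of \eqref{al: interval_given_prev}, and recognise the result as the formula for $R_\theta(z,C)$ given in Lemma~\ref{lem: repres_R_theta}.
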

We add another property regarding the distribution of $\Theta$ given $Z_1,\dots,Z_n$ that is proven in Appendix~\ref{app: 1}
\begin{lem} \label{lem: theta_conditioned_on_interval}
	For any $n\in\mathbb{N}$ and $A\in \mathcal{B}([0,2\pi))$ the conditional distribution of the anchor $\Theta$ given $Z_1 = (\Gamma_1, \Gamma_1^{\min}, \Gamma_1^{\max}), \ldots, Z_n = (\Gamma_n, \Gamma_n^{\min}, \Gamma_n^{\max})$ of the unstopped shrinkage sequence is
	\begin{equation}
	\label{eq: theta_conditioned_on_interval}
	\mathbb{P}_{\Theta\mid Z_1,\dots, Z_{n}} (A) = \mathbb{P}_{\Theta\mid \Gamma_n^{\min},\Gamma_n^{\max}}(A) = \frac{\mathbb{P}_{\Theta}(A\cap I(\Gamma_n^{\min},\Gamma_n^{\max}))}{\mathbb{P}_\Theta(I(\Gamma_n^{\min},\Gamma_n^{\max}))}.
	\end{equation}
\end{lem}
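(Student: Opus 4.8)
The plan is to argue by induction on $n$, exploiting the dependency structure of the unstopped shrinkage sequence conditioned on $\Theta$ together with the fact that $\Theta\sim\mathcal U_S$ was the anchor. The first equality, $\mathbb{P}_{\Theta\mid Z_1,\dots,Z_n}=\mathbb{P}_{\Theta\mid \Gamma_n^{\min},\Gamma_n^{\max}}$, says that once we know the current interval endpoints, the earlier iterates carry no further information about $\Theta$; the second equality identifies this conditional law as $\mathcal U_S$ restricted to $I(\Gamma_n^{\min},\Gamma_n^{\max})$, i.e. $\mathbb{P}_\Theta(\,\cdot\cap I(\Gamma_n^{\min},\Gamma_n^{\max}))/\mathbb{P}_\Theta(I(\Gamma_n^{\min},\Gamma_n^{\max}))$.

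\emph{Base case $n=1$.} By \eqref{eq: init_distr} we have $Z_1=(\Gamma_1,\Gamma_1,\Gamma_1)$ with $\Gamma_1\sim\mathcal U_{[0,2\pi)}$ \emph{independent} of $\Theta$ (line~1 of Algorithm~\ref{alg:shrink} draws $\Gamma_1$ without reference to $\theta$). Hence $\mathbb{P}_{\Theta\mid Z_1}=\mathbb{P}_\Theta=\mathcal U_S$. On the other hand $I(\Gamma_1^{\min},\Gamma_1^{\max})=I(\Gamma_1,\Gamma_1)=[0,2\pi)\supseteq S$, so the right-hand side of \eqref{eq: theta_conditioned_on_interval} is $\mathbb{P}_\Theta(A\cap[0,2\pi))/\mathbb{P}_\Theta([0,2\pi))=\mathbb{P}_\Theta(A)$, and both equalities hold.

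\emph{Inductive step.} Assume \eqref{eq: theta_conditioned_on_interval} for $n$. To pass to $n+1$ I would use that, by construction (cf. \eqref{al: interval_given_prev} and the Markov property \eqref{al: MP}), the joint law of $(\Theta,Z_1,\dots,Z_{n+1})$ factorises as: the law of $(\Theta,Z_1,\dots,Z_n)$, times the kernel for $(\Gamma_{n+1}^{\min},\Gamma_{n+1}^{\max})$ given $(Z_n,\Theta)$ from \eqref{al: interval_given_prev}, times the kernel for $\Gamma_{n+1}$ given $(\Gamma_{n+1}^{\min},\Gamma_{n+1}^{\max})$ from \eqref{al: gamma_given_interval}, the latter being independent of $\Theta$. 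Writing $E_{n+1}:=(\Gamma_{n+1}^{\min},\Gamma_{n+1}^{\max})$, a Bayes computation then gives, for a bounded test function $g$,
\[
\mathbb{E}[g(\Theta)\mid Z_1,\dots,Z_{n+1}] \;=\; \mathbb{E}[g(\Theta)\mid Z_1,\dots,Z_n,\,E_{n+1}],
\]
since conditioning additionally on $\Gamma_{n+1}$ adds nothing once $E_{n+1}$ is known. Now invoke \eqref{al: interval_given_prev}: conditioned on $Z_n$, the new endpoints $E_{n+1}$ are either $(\Gamma_n,\Gamma_n^{\max})$ or $(\Gamma_n^{\min},\Gamma_n)$, and the indicator $\mathbf 1_{J(\Gamma_n^{\min},\Theta)}(\Gamma_n)$ selecting the first case is, by Lemma~\ref{lem: I_interval_prop}, equal to $\mathbf 1_{I(\Gamma_n,\Gamma_n^{\min})}(\Theta)$ — a function of $\Theta$ and $Z_n$ only, and in fact only of whether $\Theta$ lies in the new subinterval $I(\Gamma_n,\Gamma_n^{\max})$ versus its complement inside $I(\Gamma_n^{\min},\Gamma_n^{\max})$. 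Therefore, starting from the inductive hypothesis that $\Theta\mid Z_1,\dots,Z_n$ is $\mathcal U_S$ restricted to $I(\Gamma_n^{\min},\Gamma_n^{\max})$, conditioning further on the event $\{E_{n+1}=(\Gamma_n,\Gamma_n^{\max})\}=\{\Theta\in I(\Gamma_n,\Gamma_n^{\max})\}$ (respectively its complement) yields $\mathcal U_S$ restricted to $I(\Gamma_n,\Gamma_n^{\max})=I(\Gamma_{n+1}^{\min},\Gamma_{n+1}^{\max})$ (respectively to $I(\Gamma_n^{\min},\Gamma_n)=I(\Gamma_{n+1}^{\min},\Gamma_{n+1}^{\max})$). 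This is exactly \eqref{eq: theta_conditioned_on_interval} for $n+1$, and it depends on $Z_1,\dots,Z_{n+1}$ only through $E_{n+1}=(\Gamma_{n+1}^{\min},\Gamma_{n+1}^{\max})$, giving both claimed equalities.

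The main obstacle I anticipate is the careful bookkeeping in the inductive step: one must verify that the selection indicator in \eqref{al: interval_given_prev}, rewritten via Lemma~\ref{lem: I_interval_prop}, really is $\mathbb{P}_\Theta$-a.s. equal to the indicator of $\Theta$ lying in one of the two sub-intervals whose union (up to the single point $\Gamma_n$, which is $\mathcal U_S$-null since $\lambda(S)>0$ is implicit) is $I(\Gamma_n^{\min},\Gamma_n^{\max})$, and that no probability mass escapes at the shared endpoint $\Gamma_n$. Handling the three cases in the definitions of $I(\cdot,\cdot)$ and $J(\cdot,\cdot)$, together with the degenerate sub-case where one of the two candidate sub-intervals is all of $[0,2\pi)$ or empty, is where the proof is most delicate; a clean way to package this is to do the Bayes computation abstractly with the kernel $R_\theta$ from Lemma~\ref{lem: repres_R_theta} and then read off the result, deferring the case analysis to the interval identities already recorded. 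The full details are given in Appendix~\ref{app: 1}.
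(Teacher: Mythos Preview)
Your plan is correct and follows essentially the same inductive route as the paper's proof: the base case via independence of $Z_1$ and $\Theta$ together with $I(\Gamma_1,\Gamma_1)=[0,2\pi)$, and the inductive step via the observation that $\Gamma_{n+1}$ carries no information about $\Theta$ beyond $(\Gamma_{n+1}^{\min},\Gamma_{n+1}^{\max})$ (from \eqref{al: gamma_given_interval}) and that knowing the new endpoints amounts to knowing in which of the two sub-arcs of $I(\Gamma_n^{\min},\Gamma_n^{\max})$ the anchor lies. The only organisational difference is that the paper carries this out by directly verifying the defining identity of the conditional expectation on product sets (your display \eqref{al: to_show}), expanding $R_\Theta(Z_n,C_{n+1})$ via Lemma~\ref{lem: repres_R_theta} and then invoking the induction hypothesis through \eqref{al: gamma_gamma_max}--\eqref{al: gamma_min_gamma}, whereas you phrase the same computation as a Bayes/successive-conditioning argument; the underlying interval identities and the handling of the null endpoint $\Gamma_n$ are identical.
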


\subsection{Stopping of the shrinkage procedure}

Now we are aiming to take the stopping criterion within the while loop of Algorithm~\ref{alg:shrink} into account. For this we introduce the $\sigma$-algebras $\mathcal{F}_n := \sigma (Z_1,\dots, Z_n)$ and the natural filtration $\{\mathcal{F}_n\}_{n \in \mathbb{N}}$ of $(Z_n)_{n\in\mathbb{N}}$. We define the (random) termination time $\tau_S$ of the while-loop as the first $n\in\mathbb{N}$ where $\Gamma_n$ is in $S$, i.e.,
\begin{align}
\label{eq:tau-definition}
\tau_S
:=
\inf
\left\{
n \in \mathbb{N}
:
Z_n \in S \times [0, 2\pi)^2
\right\},
\end{align}
where by convention $\inf \emptyset = \infty$. 
Note that $\tau_S$ is a stopping time w.r.t. the natural filtration, since 
\[
\{\tau_S = n\}
=
\bigcap_{k=1}^{n-1}
\left\{
Z_k \notin S \times [0, 2\pi)^2
\right\}
\cap
\left\{
Z_n \in S \times [0, 2\pi)^2
\right\}
\in
\mathcal{F}_n, 
\]
for any $n \in \mathbb{N}$. Moreover, the transition mechanism of Algorithm~\ref{alg:shrink} for input $S$ and $\theta$ can be formulated in terms of a transition kernel if the while-loop conditioned on 
the event
$\Theta=\theta$ terminates almost surely, that is, $\mathbb{P}(\tau_S<\infty\mid \Theta=\theta)=1$. 
	To illuminate a sufficient condition for that property we need the following profane definition.
	\begin{defi}
		A set $S\in \mathcal{B}([0,2\pi))$ is \emph{open on the circle} if for all $\theta \in S$ there exists an $\varepsilon > 0$ such that 
		$I(\theta-\varepsilon \mod 2\pi, \theta+\varepsilon \mod 2\pi) \subseteq S$.
		\end{defi}
	\noindent
	In other words, $S\in \mathcal{B}([0,2\pi))$ is open on the circle if it is the image of an open set under the natural bijection between $[0,2\pi)$ and the one-dimensional sphere.
\begin{lem}
	\label{lem: as_finite_st}  
	Assume that $S\in \mathcal{B}([0,2\pi))$ is
	open on the circle and non-empty. 
	Then, for any $\theta\in S$ we have
	$\mathbb{P}(\tau_S<\infty \mid \Theta=\theta)=1$, i.e., the stopping time, that corresponds to the number of iterations of the while-loop in Algorithm~\ref{alg:shrink} until termination, is almost surely finite.
\end{lem}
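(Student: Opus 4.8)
The plan is to track, under the conditional law given the anchor $\Theta=\theta$, the arc-length $\ell_n := \lambda\bigl(I(\Gamma_n^{\min},\Gamma_n^{\max})\bigr)$ of the bracketing interval in the unstopped shrinkage sequence, to show that it contracts by a fixed factor in expectation, and to exploit openness of $S$ at $\theta$: once $\ell_n$ falls below the radius of an arc around $\theta$ that lies in $S$, the bracketing interval --- which always contains $\theta$ since $Z_n\in\Lambda_\theta$ almost surely --- is trapped inside $S$, so the freshly drawn angle $\Gamma_n$ lies in $S$ and the while-loop has terminated.

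First I would prove the one-step contraction $\mathbb{E}[\ell_{n+1}\mid\mathcal{F}_{n-1},\Theta=\theta]\le\frac34\,\ell_n$ for $n\ge2$. Given $\mathcal{F}_{n-1}$ and $\Theta=\theta$, the pair $(\Gamma_n^{\min},\Gamma_n^{\max})$ is already determined by \eqref{al: interval_given_prev}, so $\ell_n$ is $\mathcal{F}_{n-1}$-measurable, and $\theta\in I(\Gamma_n^{\min},\Gamma_n^{\max})$. Parametrising this arc by arc-length starting from $\Gamma_n^{\min}$, the uniform draw $\Gamma_n$ has position $\Sigma\sim\mathcal{U}_{(0,\ell_n)}$ while $\theta$ has a fixed position $P\in[0,\ell_n)$; by the case distinction in \eqref{al: interval_given_prev}, equivalently in Lemma~\ref{lem: repres_R_theta}, the next bracketing interval is obtained by cutting the current one at $\Gamma_n$ and keeping the side containing $\theta$, whence $\ell_{n+1}=(\ell_n-\Sigma)\,\mathbf{1}_{\{\Sigma<P\}}+\Sigma\,\mathbf{1}_{\{\Sigma>P\}}$. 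Integrating out $\Sigma$ gives
\[
\mathbb{E}\bigl[\ell_{n+1}\mid\mathcal{F}_{n-1},\Theta=\theta\bigr]=\ell_n\Bigl(\frac12+\frac{P}{\ell_n}-\frac{P^2}{\ell_n^2}\Bigr)\le\frac34\,\ell_n,
\]
since $\frac12+u-u^2\le\frac34$ for $u\in[0,1]$. Taking expectations and iterating the resulting inequality $\mathbb{E}[\ell_{n+1}\mid\Theta=\theta]\le\frac34\,\mathbb{E}[\ell_n\mid\Theta=\theta]$ from $\ell_2=2\pi$ (the second bracketing interval is still the whole circle) yields $\mathbb{E}[\ell_n\mid\Theta=\theta]\le 2\pi\,(3/4)^{n-2}$ for all $n\ge2$, so $\mathbb{P}(\ell_n\ge\varepsilon\mid\Theta=\theta)\to0$ for every $\varepsilon>0$ by Markov's inequality.

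Next I would convert this into termination. Since $S$ is open on the circle and $\theta\in S$, fix $\varepsilon\in(0,\pi)$ with $I(\theta-\varepsilon \mod 2\pi,\, \theta+\varepsilon \mod 2\pi)\subseteq S$. On the event $\{\ell_n<\varepsilon\}$ the arc $I(\Gamma_n^{\min},\Gamma_n^{\max})$ contains $\theta$ and has length below $\varepsilon$, hence lies in the above $\varepsilon$-arc and therefore in $S$; in particular $\Gamma_n\in I(\Gamma_n^{\min},\Gamma_n^{\max})\subseteq S$, so $Z_n\in S\times[0,2\pi)^2$ and $\tau_S\le n$. Thus, up to a null set, $\{\tau_S=\infty\}\subseteq\{\ell_n\ge\varepsilon\}$ for every $n\ge2$, and letting $n\to\infty$ gives $\mathbb{P}(\tau_S=\infty\mid\Theta=\theta)\le\inf_{n\ge2}\mathbb{P}(\ell_n\ge\varepsilon\mid\Theta=\theta)=0$, which is the claim.

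I expect the main obstacle to be the bookkeeping in the contraction step: reading off correctly from \eqref{al: interval_given_prev}/Lemma~\ref{lem: repres_R_theta} that the update cuts the current bracketing interval at the newly drawn angle and retains the side through the anchor, identifying the resulting length as $(\ell_n-\Sigma)\mathbf{1}_{\{\Sigma<P\}}+\Sigma\,\mathbf{1}_{\{\Sigma>P\}}$, and handling the degenerate opening steps (where $\Gamma_1^{\min}=\Gamma_1^{\max}$ and the ``arc'' is the full circle $[0,2\pi)$) so that the arc-length parametrisation still makes sense and the contraction holds from $n=2$ on. The circle-geometry inclusion $I(\Gamma_n^{\min},\Gamma_n^{\max})\subseteq I(\theta-\varepsilon \mod 2\pi,\, \theta+\varepsilon \mod 2\pi)$ also needs a little care with the half-open endpoints, but is otherwise routine.
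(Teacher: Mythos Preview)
Your proof is correct but follows a genuinely different route from the paper's. The paper proves a uniform one-step minorization: for every $z=(\gamma,\gamma^{\min},\gamma^{\max})\in\Lambda_\theta$ one has $R_\theta(z,S\times[0,2\pi)^2)\ge \varepsilon/(2\pi)$, simply because any shrunk interval contains $\theta$ and hence overlaps the $\varepsilon$-arc around $\theta$ lying in $S$ in a set of length at least $\varepsilon$ (relative to an interval of length at most $2\pi$). Iterating this gives $\mathbb{P}(\tau_S>n\mid\Theta=\theta)\le(1-\varepsilon/(2\pi))^{n-1}$ directly. You instead track the arc-length $\ell_n$, show the target-independent supermartingale-type contraction $\mathbb{E}[\ell_{n+1}\mid\mathcal F_{n-1},\Theta=\theta]\le\tfrac34\ell_n$, and then invoke openness only at the very end to trap the bracket inside $S$ once $\ell_n<\varepsilon$. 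Both yield geometric tails; the paper's argument is shorter and more direct for the stated claim, while yours cleanly separates a universal contraction of the shrinkage bracket (rate $3/4$, independent of $S$ and $\theta$) from the role of the openness hypothesis, and in passing gives quantitative information about the bracket dynamics that the paper's proof does not. Your indexing ($\ell_2=2\pi$, contraction valid from $n\ge2$) and your reading of the update rule from \eqref{al: interval_given_prev}/Lemma~\ref{lem: repres_R_theta} as ``cut at $\Gamma_n$ and keep the side through $\theta$'' are both correct.
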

\begin{proof}
	By the fact that $S$ is open on the circle and $\theta\in S$ there exists an
	$\varepsilon>0$ such that $I_\theta:=I(\theta_{\varepsilon,-} , \theta_{\varepsilon,+})\subseteq S$, where
	\begin{align*}
	\theta_{\varepsilon,-} & = \theta-\varepsilon \mod 2\pi,\qquad
	\theta_{\varepsilon,+} = \theta+\varepsilon \mod 2\pi,
	\end{align*}
	with $\theta\in I_\theta$.
	Furthermore, note that $\lambda(I_\theta)=2\varepsilon$. Set $\widetilde{S} := S\times [0,2\pi)^2$ and observe that for any  $\gamma^{\min},\gamma^{\max} \in [0,2\pi)$ with $\theta\in I(\gamma^{\min},\gamma^{\max})$ we have
	\begin{align*}
	\mathcal{U}_{I(\gamma^{\min},\gamma^{\max})} (S) 
	& = \frac{\lambda(S\cap I(\gamma^{\min},\gamma^{\max}))}{\lambda(I(\gamma^{\min},\gamma^{\max}))}\\
	&\geq 
	\begin{cases}
	1 & \gamma^{\min}, \gamma^{\max}\in I_{\theta}\\
	\frac{\varepsilon}{\lambda(I(\gamma^{\min},\gamma^{\max}))} & \gamma^{\min} \in I_\theta, \gamma^{\max}\not\in I_\theta \quad \text{or} \quad \gamma^{\min} \not\in I_\theta, \gamma^{\max}\in I_\theta\\
	\frac{2\varepsilon}{\lambda(I(\gamma^{\min},\gamma^{\max}))} & \gamma^{\min}, \gamma^{\max}\not\in I_\theta
	\end{cases}\\
	& \geq \frac{\varepsilon}{2\pi}.
	\end{align*}
	Using this estimate, we obtain for any $z=(\gamma,\gamma^{\min},\gamma^{\max})\in \Lambda_\theta$ that 
	\begin{align*}
	&R_\theta(z,\widetilde{S}) = 
	\mathbf{1}_{I(\gamma,\gamma^{\max})}(\theta) \mathcal{U}_{I(\gamma,\gamma^{\max})}(S)
	+ \mathbf{1}_{J(\gamma^{\min},\gamma)}(\theta) \mathcal{U}_{I(\gamma^{\min},\gamma)}(S)
	\geq \frac{\varepsilon}{2\pi}.
	\end{align*}
	%%%%%%
	%%%%%%		START: Some arguments for the first inequality: Do not remove!!!	
	%%%%%%
	%
	%The first inequality of the former estimate can be seen as follows:\\
	%1st Case: Here it is clear that $I(\gamma^{\min},\gamma^{\max})\subseteq S$ such that the next state is also in $S$.\\
	%2nd Case: W.l.o.g. assume that $\gamma^{\min} \in S, \gamma^{\max}\not\in S$ the other `or' case follows by the same arguments. It follows that either $I(\theta_{\varepsilon,-},\theta) \subseteq S$ or $I(\theta,\theta_{\varepsilon,+})\subseteq S$ is contained in $I(\gamma^{\min},\gamma^{\max})$, such that we get the lower bound of the probability by the fact that $\lambda(I(\theta_{\varepsilon,-},\theta))=\lambda(I(\theta,\theta_{\varepsilon,+}))=\varepsilon$.\\
	%3rd Case: Here $I(\theta_{\varepsilon,-},\theta_{\varepsilon,+}) \subseteq I(\gamma^{\min},\gamma^{\max})$.
	%
	%%%%%%
	%%%%%%		END: Some arguments for the first inequality: Do not remove!!!	
	%%%%%%
	Recall that $Z_1$ with $\mathbb{P}_{Z_1}$ from \eqref{eq: init_distr} satisfies $Z_1\in \Lambda_\theta$ almost surely. Now 
	applying the former estimate iteratively leads to
	\begin{align*}
	& \mathbb{P}(Z_1,\dots,Z_n \not \in \widetilde S^{n}\mid \Theta=\theta) 
	 = \overbrace{\int_{\widetilde{S}^c} \cdots \int_{\widetilde{S}^c}}^{n-1}
	R_\theta(z_{n-1},\widetilde{S}^c) R_\theta(z_{n-2},{\rm d}z_{n-1}) \cdots R_\theta(z_{1},{\rm d}z_{2}) \mathbb{P}_{Z_1}({\rm d}z_1)\\
	& \leq \left(1-\frac{\varepsilon}{2\pi} \right)\; \mathbb{P}(Z_1,\dots,Z_{n-1} \not \in \widetilde S^{n-1} \mid \Theta=\theta) 
	 \leq \dots \leq \left(1-\frac{\varepsilon}{2\pi} \right)^{n-1} \mathbb{P}_{Z_1}(\widetilde{S}^c) \leq \left(1-\frac{\varepsilon}{2\pi} \right)^{n-1},
	\end{align*}
	such that
	\begin{align*}
	\mathbb{P}(\tau_S = \infty \mid \Theta= \theta ) & \leq \lim_{n\to \infty} \mathbb{P}(\tau_S > n \mid \Theta= \theta  ) 
	 \leq \lim_{n\to \infty} \mathbb{P}(Z_1,\dots,Z_n \not \in \widetilde S^{n} \mid \Theta= \theta ) \leq 0
	\end{align*}
	and the proof is finished.
\end{proof}
\begin{cor}
	Assume that the likelihood function/unnormalized density $\rho\colon \mathcal{H} \to (0,\infty)$ on the separable Hilbert space $\mathcal{H}$ is lower semi-continuous, that is, all level sets $\mathcal{H}(t)$ are open sets. Then
	\[
	\mathbb{P}(\tau_{p^{-1}_{x,w}(\mathcal{H}(t))}<\infty \mid \Theta=0) = 1, \qquad  
	\forall x,w\in \mathcal{H} \quad \text{and} \quad t\in (0,\rho(x)),
	\] 
	i.e., the stopping times, that correspond to the number of iterations of the while-loop of Algorithm~\ref{alg:shrink} until termination called within ESS, 
		are almost surely finite.
\end{cor}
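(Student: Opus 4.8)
The plan is to reduce the statement to Lemma~\ref{lem: as_finite_st} by verifying, for every fixed $x,w\in\mathcal{H}$ and $t\in(0,\rho(x))$, that the set $S:=p_{x,w}^{-1}(\mathcal{H}(t))$ is non-empty and open on the circle, and that the anchor value $\theta=0$ lies in $S$. Then the conclusion $\mathbb{P}(\tau_S<\infty\mid\Theta=0)=1$ is exactly what Lemma~\ref{lem: as_finite_st} delivers, and since $x,w,t$ were arbitrary (with $t<\rho(x)$), the corollary follows.

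First I would check membership of the anchor: $p_{x,w}(0)=\cos(0)x+\sin(0)w=x$, and $\rho(x)>t$ because $t\in(0,\rho(x))$, so $x\in\mathcal{H}(t)$ and hence $0\in p_{x,w}^{-1}(\mathcal{H}(t))=S$. In particular $S\neq\emptyset$. Next I would argue that $S$ is open on the circle. The map $\theta\mapsto p_{x,w}(\theta)=\cos(\theta)x+\sin(\theta)w$ is continuous from $\mathbb{R}$ into $(\mathcal{H},\|\cdot\|)$ (the coefficients $\cos,\sin$ are continuous scalar functions and $\mathcal{H}$ is a normed space) and is $2\pi$-periodic, hence it descends to a continuous map from the one-dimensional sphere into $\mathcal{H}$. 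Since $\rho$ is lower semi-continuous, $\mathcal{H}(t)$ is open in $\mathcal{H}$, so its preimage under this continuous map is open in the sphere; transported back to $[0,2\pi)$ via the natural bijection this says precisely that $S$ is open on the circle. Concretely: for any $\theta\in S$ continuity provides $\delta>0$ with $p_{x,w}(\phi)\in\mathcal{H}(t)$ whenever the arc-distance of $\phi$ to $\theta$ is less than $\delta$; taking $\varepsilon:=\min\{\delta,\pi\}$ gives $I(\theta-\varepsilon\bmod 2\pi,\ \theta+\varepsilon\bmod 2\pi)\subseteq S$, which is the defining property.

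Having established that $S=p_{x,w}^{-1}(\mathcal{H}(t))$ is non-empty, open on the circle, and contains $\theta=0$, I would invoke Lemma~\ref{lem: as_finite_st} directly with this $S$ and this $\theta$ to obtain $\mathbb{P}(\tau_{p_{x,w}^{-1}(\mathcal{H}(t))}<\infty\mid\Theta=0)=1$, completing the proof. I do not expect a genuine obstacle here; the only point requiring a little care is the bookkeeping in the previous paragraph, namely translating topological openness of the preimage on the circle into the interval notation $I(\cdot,\cdot)$ used in the definition of ``open on the circle'' (and making sure the wrap-around at $0\equiv 2\pi$ causes no issue), together with the observation that $p_{x,w}$ is continuous as a map into the Hilbert space equipped with its norm topology.
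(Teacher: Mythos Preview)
Your proposal is correct and follows essentially the same approach as the paper: verify that $S=p_{x,w}^{-1}(\mathcal{H}(t))$ contains $0$ and is open on the circle (via continuity and $2\pi$-periodicity of $p_{x,w}$ together with openness of $\mathcal{H}(t)$), then invoke Lemma~\ref{lem: as_finite_st}. The paper's proof is terser but the logical content is identical.
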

\begin{proof}
	By the continuity of $p_{x,w}$, the $2\pi$-periodicity and the fact that $\mathcal{H}(t)$ is open we also have that $p^{-1}_{x,w}(\mathcal{H}(t)) \subseteq [0,2\pi)$ is
	open on the circle
	with $0\in p^{-1}_{x,w}(\mathcal{H}(t))$. Therefore the statement follows by Lemma~\ref{lem: as_finite_st}. 
\end{proof}
The previous corollary tells us that whenever $\rho$ is lower semi-continuous, then calling Algorithm~\ref{alg:shrink} with input $S=p^{-1}_{x,w}(\mathcal{H}(t))$ and $\theta_{\text{in}}=0$ terminates almost surely, such that Algorithm~\ref{alg:ess_ala_ana} also terminates and is well-defined.
\begin{rem}
	Usually the non-termination issue does not seem to have a big influence in applications since most densities of interest have open level sets. For example every continuous density is lower semi-continuous.
	Even if they have single outliers for which the algorithm would not terminate, in practice, the algorithm would shrink and shrink and at some point, because a computer works with machine precision, the shrinked interval cannot be distinguished anymore from the current state such that it will eventually accept and return the current as the next instance. 
	In this sense, the algorithm does not get stuck in an infinite loop.
		However, in that case the algorithmic and mathematical description do not coincide with the implementation. 
	%	However, we are not aware of a realistic example where this is a problem that leads to undesirable phenomena.  
\end{rem}

%\subsection{Reversibility of the shrinkage procedure}
\subsection{Properties of the stopped shrinkage procedure}
\label{sec:shrink}

Now we introduce the stopped random variable $Z_{\tau_S}$ of the Markov chain $(Z_n)_{n\in\mathbb{N}}$. 
For the formal definition on the event $\tau_S = \infty$ use an arbitrary random variable $Z_{\infty}$, that is assumed to be measurable w.r.t. $\mathcal{F}_{\infty} := \sigma (Z_k, k \in \mathbb{N})$. We set 
\[
Z_{\tau_S}(\omega) :=
Z_{\infty}(\omega) \mathbf{1}_{\{\tau_S = \infty\}}(\omega)
+\sum_{k=1}^{\infty}
Z_k(\omega) \mathbf{1}_{\{\tau_S = k\}}(\omega),\qquad \omega\in \Omega.
\]
Notice that $Z_{\tau_S}$ is indeed measurable w.r.t. the $\tau_S$-induced $\sigma$-algebra
\[
\mathcal{F}_{\tau_S}
:=
\{
A \in \mathcal{F}
:
A \cap \{\tau_S = k\}
\in
\mathcal{F}_k, \; k \in \mathbb{N}
\}, 
\]
since for any $A \in \mathcal{F}$ and $k \in \mathbb{N}$ we have
\[
\{Z_{\tau_S} \in A\} \cap \{\tau_S = k\}
=
\{Z_{k} \in A\} \cap \{\tau_S = k\} 
\in
\mathcal{F}_k.
\]
Thus, $Z_{\tau_{S}}= (\Gamma_{\tau_{S}}, \Gamma_{\tau_{S}}^{\min}, \Gamma_{\tau_{S}}^{\max})$ is a $[0,2\pi)^3$-valued random variable and its components $\Gamma_{\tau_{S}}, \Gamma_{\tau_{S}}^{\min}, \Gamma_{\tau_{S}}^{\max}$ are $[0,2\pi)$-valued random variables on the probability space $(\Omega,\mathcal{F}_{\tau_S},\mathbb{P})$. 

Now for given $S\in \mathcal{B}([0,2\pi))$ and an arbitrary $\theta_{\text{in}} \in S$, after the whole construction, we are able to state the transition kernel $Q_S$ on $S \times \mathcal{B}(S)$, also called shrinkage kernel, that corresponds to the transition mechanism of Algorithm~\ref{alg:shrink}. 
For $\theta_{\text{in}}\in S$ and $F\in \mathcal{B}(S)$ it
is given as
\begin{equation}  
Q_S(\theta_{\rm in},F) = \mathbb{P}(\Gamma_{\tau_S}\in F, \tau_S<\infty \mid \Theta=\theta_{\rm in}).
\end{equation}
%\mareike{We call $Q_S$ \emph{shrinkage kernel}.}
We formulate the main result regarding the transition kernel $Q_S$ that is essentially used in verifying the reversibility of ESS.

\begin{thm}
	\label{thm: shrink_reversible}
	Let $S\in \mathcal{B}([0,2\pi))$ be 
	%	\mareike{open in $\mathbb{S}^1$}.
	open on the circle and non-empty. 
	Then, 
	the shrinkage kernel $Q_S$ is reversible w.r.t. the uniform distribution $\mathcal{U}_S$ on $S$.
\end{thm}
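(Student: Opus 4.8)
The plan is to establish detailed balance for $Q_S$ with respect to $\mathcal{U}_S$, namely that for all $F,G\in\mathcal{B}(S)$,
\[
\int_F Q_S(\theta,G)\,\mathcal{U}_S(\dint\theta) = \int_G Q_S(\theta,F)\,\mathcal{U}_S(\dint\theta).
\]
The natural route is to exploit the anchor construction: when $\Theta\sim\mathcal{U}_S$ and we run the unstopped shrinkage sequence $(Z_n)_{n\in\N}$ conditioned on $\Theta$, the quantity $\int_F Q_S(\theta,G)\,\mathcal{U}_S(\dint\theta)$ equals $\mathbb{P}(\Theta\in F,\ \Gamma_{\tau_S}\in G,\ \tau_S<\infty)$. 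So the theorem reduces to the symmetry statement
\[
\mathbb{P}(\Theta\in F,\ \Gamma_{\tau_S}\in G,\ \tau_S<\infty) = \mathbb{P}(\Theta\in G,\ \Gamma_{\tau_S}\in F,\ \tau_S<\infty),
\]
i.e. the pair $(\Theta,\Gamma_{\tau_S})$ is exchangeable on $\{\tau_S<\infty\}$. Since Lemma~\ref{lem: as_finite_st} gives $\mathbb{P}(\tau_S<\infty\mid\Theta=\theta)=1$ for every $\theta\in S$, the event $\{\tau_S<\infty\}$ has full probability and can be dropped.

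The key structural fact I would use is a \emph{time-reversal}/detailed-balance identity at the level of the unstopped chain. Decomposing over $\{\tau_S=n\}$, it suffices to show for each $n$ that
\[
\mathbb{P}(\Theta\in F,\ \Gamma_n\in G,\ \tau_S=n) = \mathbb{P}(\Theta\in G,\ \Gamma_n\in F,\ \tau_S=n).
\]
Here Lemma~\ref{lem: theta_conditioned_on_interval} is central: conditioned on $Z_1,\dots,Z_n$, the anchor $\Theta$ is distributed as $\mathcal{U}_S$ restricted to $I(\Gamma_n^{\min},\Gamma_n^{\max})$, which depends only on $(\Gamma_n^{\min},\Gamma_n^{\max})$. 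Simultaneously, $\Gamma_n$ given $(\Gamma_n^{\min},\Gamma_n^{\max})$ is $\mathcal{U}_{I(\Gamma_n^{\min},\Gamma_n^{\max})}$ by \eqref{al: gamma_given_interval}, and on $\{\tau_S=n\}$ we additionally condition $\Gamma_n\in S$ — so both $\Theta$ and $\Gamma_n$ are, conditionally, i.i.d.\ uniform on $S\cap I(\Gamma_n^{\min},\Gamma_n^{\max})$. That i.i.d.\ structure makes the swap of $\Theta$ and $\Gamma_n$ transparent, \emph{provided} one shows that the event $\{\tau_S=n\}$ (equivalently, $\Gamma_1,\dots,\Gamma_{n-1}\notin S$ together with the nesting described by $R_\theta$) is, after conditioning on $(\Gamma_n^{\min},\Gamma_n^{\max})$, independent of $\Gamma_n$ and symmetric under interchanging the roles of $\theta$ and $\gamma_n$. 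Concretely I would run the shrinkage recursion backwards: using Lemma~\ref{lem: kernel_as_expect} and Lemma~\ref{lem: repres_R_theta}, express $\mathbb{P}(\Theta\in F,\Gamma_n\in G,\tau_S=n)$ as an integral over the history $z_1,\dots,z_{n-1}$ of a product of $R_{\theta}$-kernels times $\mathbf{1}_{I(\gamma_{n-1}^{\min},\gamma_{n-1}^{\max})}(\theta)$, then observe — via Lemma~\ref{lem: I_interval_prop} and the explicit two-term form of $R_\theta$ — that the resulting expression is invariant under swapping $\theta\leftrightarrow\gamma_n$ because at each step the indicator "which side did $\gamma$ fall on" is symmetric in the two candidate anchors lying in the current interval.

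The main obstacle, I expect, is making the interchange-of-$\Theta$-and-$\Gamma_n$ argument fully rigorous in the presence of the stopping time and the delicate circular-interval combinatorics: one must carefully track that the anchor $\theta$ is never "hit" (all the $\neq\theta$, $\neq\gamma$ caveats in Lemmas~\ref{lem: I_interval_prop} and~\ref{lem: repres_R_theta} are needed), that the backward recursion's measurability and Fubini manipulations are legitimate, and — crucially — that the shrinking interval $I(\Gamma_n^{\min},\Gamma_n^{\max})$ always contains \emph{both} $\theta$ and the eventual terminal point, so the two-point symmetry propagates. A clean way to package this is to prove, by induction on $n$, the stronger symmetric identity
\[
\mathbb{P}(\Theta\in F,\ Z_n\in G\times B,\ \tau_S> n-1) \;=\; \mathbb{P}\bigl(\text{same expression with the roles of the "current" angle and }\Theta\text{ swapped}\bigr),
\]
and then specialize $B=[0,2\pi)^2$ and intersect with $\{\Gamma_n\in S\}$; the induction step is exactly one application of the $R_\theta$ recursion plus Lemma~\ref{lem: I_interval_prop}. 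Once this exchangeability is in hand, integrating against $\mathcal{U}_S(\dint\theta)$ and summing over $n$ yields the reversibility claim.
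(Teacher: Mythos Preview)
Your approach is essentially the paper's: decompose over $\{\tau_S=k\}$, use Lemma~\ref{lem: kernel_as_expect} to integrate out $\Gamma_k$, then Lemma~\ref{lem: theta_conditioned_on_interval} to integrate out $\Theta$, arriving at an expression symmetric in $F$ and $G$. The obstacle you anticipate (a backward recursion or an inductive $\theta\leftrightarrow\gamma_n$ swap identity for the $R_\theta$ kernels) does not actually arise: once one reaches
\[
\int_G Q_S(\theta,F)\,\mathcal{U}_S(\dint\theta)=\sum_{k\geq 1}\mathbb{E}\Bigl[\mathbf{1}_G(\Theta)\prod_{i=1}^{k-1}\mathbf{1}_{S^c}(\Gamma_i)\,\mathbf{1}_{I(\Gamma_k^{\min},\Gamma_k^{\max})}(\Theta)\,\mathcal{U}_{I(\Gamma_k^{\min},\Gamma_k^{\max})}(F)\Bigr],
\]
a \emph{single} conditioning on $Z_1,\dots,Z_k$ (with respect to which $\prod_{i<k}\mathbf{1}_{S^c}(\Gamma_i)$ and $\mathcal{U}_{I(\Gamma_k^{\min},\Gamma_k^{\max})}(F)$ are already measurable) together with Lemma~\ref{lem: theta_conditioned_on_interval} replaces $\mathbf{1}_{G\cap I(\Gamma_k^{\min},\Gamma_k^{\max})}(\Theta)$ by $\mathcal{U}_S(G\cap I(\Gamma_k^{\min},\Gamma_k^{\max}))/\mathcal{U}_S(I(\Gamma_k^{\min},\Gamma_k^{\max}))$, and the resulting product of ratios is manifestly $F\leftrightarrow G$ symmetric. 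All the inductive work is already packed into the proof of Lemma~\ref{lem: theta_conditioned_on_interval}; no further induction or path-reversal argument is needed at this stage.
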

\begin{proof}
	%Lemma~\ref{lem: as_finite_st} yields
	%\[
	%	Q_S(\theta,F) = \mathbb{P}(\Gamma_{\tau_S}\in F, \tau_S<\infty \mid \Theta=\theta), \quad \theta \in S, F\in \mathcal{B}(S).
	%\]
	%Moreover, by the Markov property of $(Z_n)_{n\in \mathbb{N}}$ conditioned on $\Theta=\theta$ and Lemma~\ref{lem: kernel_as_expect} we have
	By the Markov property of $(Z_n)_{n\in \mathbb{N}}$ conditioned on $\Theta=\theta$ and Lemma~\ref{lem: kernel_as_expect} we have
	\begin{align*}
	& \mathbb{P}(\Gamma_{\tau_S}\in F, \tau_S<\infty \mid \Theta=\theta) 
	= \sum_{k=1}^\infty \mathbb{P}[\Gamma_{\tau_S}\in F, \tau_S=k \mid \Theta=\theta]\\
	& = \sum_{k=1}^\infty \mathbb{P}[\Gamma_{k}\in F\cap S,\Gamma_1\in S^c,\dots,\Gamma_{k-1}\in S^c \mid \Theta=\theta]
	 = \sum_{k=1}^\infty \mathbb{E}\left[\mathbf{1}_F(\Gamma_{k}) \prod_{i=1}^{k-1} \mathbf{1}_{S^c}(\Gamma_i)  \mid \Theta=\theta\right]\\
	& = \sum_{k=1}^\infty \mathbb{E}\left[\mathbb{E}\left[\mathbf{1}_F(\Gamma_{k}) \prod_{i=1}^{k-1} \mathbf{1}_{S^c}(\Gamma_i) \mid Z_1,\dots,Z_{k-1},\Theta\right]  \mid \Theta=\theta\right]\\
	& = \sum_{k=1}^\infty \mathbb{E}\left[ \prod_{i=1}^{k-1} \mathbf{1}_{S^c}(\Gamma_i) \mathbb{E}\left[\mathbf{1}_F(\Gamma_{k}) \mid Z_1,\dots,Z_{k-1},\Theta\right]  \mid \Theta=\theta\right]\\
	& = \sum_{k=1}^\infty \mathbb{E}\left[ \prod_{i=1}^{k-1} \mathbf{1}_{S^c}(\Gamma_i) \mathbb{E}\left[\mathbf{1}_{F\times [0,2\pi)^2}(Z_{k}) \mid Z_{k-1},\Theta\right]  \mid \Theta=\theta\right]\\
	& 
	\underset{\eqref{eq: kernel_as_expect}}{=} 
	\sum_{k=1}^\infty \mathbb{E}\left[ \prod_{i=1}^{k-1} \mathbf{1}_{S^c}(\Gamma_i) \mathbb{E}\left[\mathbf{1}_{I(\Gamma_k^{\min},\Gamma_k^{\max})}(\theta)\,\mathcal{U}_{I(\Gamma_k^{\min},\Gamma_k^{\max})}(F) \mid Z_{k-1},\Theta\right]  \mid \Theta=\theta\right]\\
	& = \sum_{k=1}^\infty \mathbb{E}\left[ \prod_{i=1}^{k-1} \mathbf{1}_{S^c}(\Gamma_i) \mathbb{E}\left[\mathbf{1}_{I(\Gamma_k^{\min},\Gamma_k^{\max})}(\theta)\,\mathcal{U}_{I(\Gamma_k^{\min},\Gamma_k^{\max})}(F) \mid Z_1,\dots,Z_{k-1},\Theta\right]  \mid \Theta=\theta\right]\\
	& = \sum_{k=1}^\infty \mathbb{E}\left[  \mathbb{E}\left[ \prod_{i=1}^{k-1} \mathbf{1}_{S^c}(\Gamma_i)\mathbf{1}_{I(\Gamma_k^{\min},\Gamma_k^{\max})}(\theta)\,\mathcal{U}_{I(\Gamma_k^{\min},\Gamma_k^{\max})}(F) \mid Z_1,\dots,Z_{k-1},\Theta\right]  \mid \Theta=\theta\right]\\
	& = \sum_{k=1}^\infty \mathbb{E}\left[   \prod_{i=1}^{k-1} \mathbf{1}_{S^c}(\Gamma_i)\mathbf{1}_{I(\Gamma_k^{\min},\Gamma_k^{\max})}(\theta)\,\mathcal{U}_{I(\Gamma_k^{\min},\Gamma_k^{\max})}(F)   \mid \Theta=\theta\right].
	\end{align*}	
	For $F,G\in\mathcal{B}(S)$ this yields by the definition of the conditional distribution
	\begin{align}
	\label{eq: LHS_rev}
	\int_{G} Q_S(\theta,F) \;\mathcal{U}_S({\rm d}\theta)
	=
	\sum_{k=1}^\infty \mathbb{E}\left[  \mathbf{1}_G(\Theta) \prod_{i=1}^{k-1} \mathbf{1}_{S^c}(\Gamma_i)\mathbf{1}_{I(\Gamma_k^{\min},\Gamma_k^{\max})}(\Theta)\,\mathcal{U}_{I(\Gamma_k^{\min},\Gamma_k^{\max})}(F)\right],
	\end{align}
	since $\Theta\sim \mathcal{U}_S$. Note that, by exploiting $F\in\mathcal{B}(S)$ we obtain
	\begin{equation}
	\label{eq: unif_on_I}
	\mathcal{U}_{I(\Gamma_k^{\min},\Gamma_k^{\max})}(F) = \frac{\lambda(F\cap I(\Gamma_k^{\min},\Gamma_k^{\max}))}{\lambda(I(\Gamma_k^{\min},\Gamma_k^{\max}))} 
	= \frac{\mathcal{U}_S(F\cap I(\Gamma_k^{\min},\Gamma_k^{\max}))}{\mathcal{U}_S(I(\Gamma_k^{\min},\Gamma_k^{\max}))}.
	\end{equation}
	Using that and Lemma~\ref{lem: theta_conditioned_on_interval} we modify the expectation within the sum and obtain
	\begin{align}
	\notag
	&\mathbb{E}\left[  \mathbf{1}_G(\Theta) \prod_{i=1}^{k-1} \mathbf{1}_{S^c}(\Gamma_i)\mathbf{1}_{I(\Gamma_k^{\min},\Gamma_k^{\max})}(\Theta)\,\mathcal{U}_{I(\Gamma_k^{\min},\Gamma_k^{\max})}(F)\right]\\
	\notag	 
	= & \mathbb{E}\left[  \prod_{i=1}^{k-1} \mathbf{1}_{S^c}(\Gamma_i)\mathbf{1}_{G\cap I(\Gamma_k^{\min},\Gamma_k^{\max})}(\Theta)\,\frac{\mathcal{U}_S(F\cap I(\Gamma_k^{\min},\Gamma_k^{\max}))}{\mathcal{U}_S(I(\Gamma_k^{\min},\Gamma_k^{\max}))}\right] 
	\\
	\notag
	= & \mathbb{E}\Big[ \mathbb{E} \big[ \prod_{i=1}^{k-1} \mathbf{1}_{S^c}(\Gamma_i)\mathbf{1}_{G\cap
		I(\Gamma_k^{\min},\Gamma_k^{\max})}(\Theta)\,\frac{\mathcal{U}_S(F\cap I(\Gamma_k^{\min},\Gamma_k^{\max}))}{\mathcal{U}_S(I(\Gamma_k^{\min},\Gamma_k^{\max}))}\mid Z_1,\dots,Z_k\big]\Big]\\
	\notag	 
	= & 
	\mathbb{E}\Big[ \prod_{i=1}^{k-1} \mathbf{1}_{S^c}(\Gamma_i)\,\frac{\mathcal{U}_S(F\cap
		I(\Gamma_k^{\min},\Gamma_k^{\max}))}{\mathcal{U}_S(I(\Gamma_k^{\min},\Gamma_k^{\max}))} \mathbb{E} \big[ \mathbf{1}_{G\cap I(\Gamma_k^{\min},\Gamma_k^{\max})}(\Theta) \mid Z_1,\dots,Z_k\big]\Big] \\
	\underset{\eqref{eq: theta_conditioned_on_interval}}{=}
	& 
	\mathbb{E}\Big[ \prod_{i=1}^{k-1} \mathbf{1}_{S^c}(\Gamma_i)\,\frac{\mathcal{U}_S(F\cap
		I(\Gamma_k^{\min},\Gamma_k^{\max}))}{\mathcal{U}_S(I(\Gamma_k^{\min},\Gamma_k^{\max}))} \frac{\mathcal{U}_S(G\cap I(\Gamma_k^{\min},\Gamma_k^{\max})}{\mathcal{U}_S(I(\Gamma_k^{\min},\Gamma_k^{\max})}\Big],
	\label{al: useful_repr_pos_sem}
	\end{align}
	where we also used in the last equation that $\mathbb{P}_\Theta = \mathcal{U}_S$. Now we can reverse the roles of $F$ and $G$, such that arguing backwards leads to
	\[
	\int_G Q_S(\theta,F)\;\mathcal{U}_S({\rm d}\theta )
	= \int_F Q_S(\theta,G)\; \mathcal{U}_S({\rm d}\theta ),
	\]
	which shows the claimed reversibility.
\end{proof}
In the following we show positive semi-definiteness for the Markov operator that corresponds to the transition kernel $Q_S$. We introduce the required objects for drawing this conclusion.
	Define the Hilbert space
	\[
	L_S^2 :=\{f:[0,2\pi) \to \mathbb{R} \colon \int_S f(\theta)^2\, \mathcal{U}_S(\dint \theta) < \infty \}
	\]
	equipped with the inner product $\langle f,g\rangle_S := \int_S f(\theta) g(\theta)\, \mathcal{U}_S(\dint \theta)$ for all $f,g \in L_S^2$.
	The transition kernel $Q_S$ induces a linear operator $\mathrm{Q}_S \colon L_S^2\to L_S^2$, given by
	\begin{equation}\label{eq: def shrinkage operator}
	%	\mathrm{Q_S}: L_S^2 \to L_S^2, \qquad 
	f \mapsto \left[\alpha \mapsto \int_S f(\theta)\, Q_S(\alpha, \dint\theta)\right], \quad f\in L_S^2.
	\end{equation}
	
	\begin{lem}\label{lem: positive semidefiniteness}
		Assume that $S\in \mathcal{B}([0,2\pi))$ is open on the circle and non-empty.
		Then, the operator $\mathrm{Q}_S$ defined in \eqref{eq: def shrinkage operator}, that is derived from the shrinkage kernel, is positive semi-definite, i.e., for all $f \in L_S^2$ holds $\langle \mathrm{Q}_S f,f\rangle_S \geq 0.$
	\end{lem}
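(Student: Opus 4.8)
The plan is to extract a square from the formula derived in the reversibility proof. Recall that in the proof of Theorem~\ref{thm: shrink_reversible} we obtained, for $F,G\in\mathcal{B}(S)$, the symmetric representation
\[
\int_G Q_S(\theta,F)\,\mathcal{U}_S(\dint\theta)
= \sum_{k=1}^\infty \mathbb{E}\Big[ \prod_{i=1}^{k-1} \mathbf{1}_{S^c}(\Gamma_i)\,\frac{\mathcal{U}_S(F\cap I(\Gamma_k^{\min},\Gamma_k^{\max}))}{\mathcal{U}_S(I(\Gamma_k^{\min},\Gamma_k^{\max}))}\,\frac{\mathcal{U}_S(G\cap I(\Gamma_k^{\min},\Gamma_k^{\max}))}{\mathcal{U}_S(I(\Gamma_k^{\min},\Gamma_k^{\max}))}\Big],
\]
from the chain of equalities ending at \eqref{al: useful_repr_pos_sem}. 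First I would note that, since $\langle \mathrm{Q}_S f,f\rangle_S = \int_S\big(\int_S f(\theta)\,Q_S(\alpha,\dint\theta)\big)f(\alpha)\,\mathcal{U}_S(\dint\alpha)$, the above identity is precisely the statement that $\langle \mathrm{Q}_S \mathbf{1}_F,\mathbf{1}_G\rangle_S$ equals that sum with $\mathcal{U}_S(F\cap I(\cdots))$ written as $\int \mathbf{1}_F\,\dint\mathcal{U}_S$ restricted to $I(\Gamma_k^{\min},\Gamma_k^{\max})$, and similarly for $G$.

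The key step is to replace the indicators $\mathbf{1}_F,\mathbf{1}_G$ by a general $f\in L^2_S$. Writing $\mathcal{U}_{I(\Gamma_k^{\min},\Gamma_k^{\max})}(F) = \int_S \mathbf{1}_F(\theta)\,\mathcal{U}_{I(\Gamma_k^{\min},\Gamma_k^{\max})}(\dint\theta)$ (using $F\subseteq S$ together with \eqref{eq: unif_on_I}), the same computation run with $f$ in place of $\mathbf{1}_F$ and the same $f$ in place of $\mathbf{1}_G$ gives
\[
\langle \mathrm{Q}_S f,f\rangle_S
= \sum_{k=1}^\infty \mathbb{E}\Big[ \prod_{i=1}^{k-1} \mathbf{1}_{S^c}(\Gamma_i)\,\Big(\int_S f(\theta)\,\mathcal{U}_{I(\Gamma_k^{\min},\Gamma_k^{\max})}(\dint\theta)\Big)^2\Big]\geq 0,
\]
since every term is the expectation of a nonnegative random variable. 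To make this rigorous I would, for fixed $k$, first verify the identity for simple functions $f$ by linearity from the $F,G$ case, then pass to general $f\in L^2_S$ by monotone/dominated convergence (the inner integral is bounded by $\big(\int_S f^2\,\mathcal{U}_{I}\big)^{1/2}\le$ a multiple of $\|f\|_S$ via Jensen, and Tonelli handles the sum over $k$ of nonnegative terms).

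The main obstacle I anticipate is purely bookkeeping rather than conceptual: one must carefully justify interchanging the sum over $k$, the outer expectation, and the integral against $\mathcal{U}_S(\dint\alpha)$ that defines $\langle\mathrm{Q}_S f,f\rangle_S$, and confirm that $\mathrm{Q}_S$ genuinely maps $L^2_S$ into $L^2_S$ (contractivity of $\mathrm{Q}_S$, which follows from $Q_S$ being a Markov kernel and Jensen's inequality, so that all integrals in sight are finite). Once Tonelli is invoked on the nonnegative summands, the positivity is immediate because the quantity inside the expectation is a perfect square times an indicator. A secondary point to handle with a line of justification is that Lemma~\ref{lem: as_finite_st} guarantees $\tau_S<\infty$ almost surely, so $Q_S(\theta,S)=1$ and no mass is lost, which is what makes the series representation of $\langle\mathrm{Q}_Sf,f\rangle_S$ valid in the first place.
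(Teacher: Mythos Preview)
Your proposal is correct and follows essentially the same route as the paper: both start from the symmetric representation \eqref{al: useful_repr_pos_sem} obtained in the proof of Theorem~\ref{thm: shrink_reversible}, extend it from indicators to general $f\in L^2_S$ via the standard machinery of integration theory (linearity plus monotone convergence, then positive/negative parts), and read off nonnegativity from the resulting perfect-square structure. Your additional remarks on Tonelli, contractivity of $\mathrm{Q}_S$, and $\tau_S<\infty$ are useful technical checks but do not alter the strategy.
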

	\begin{proof}
		By \eqref{eq: LHS_rev}, \eqref{eq: unif_on_I} and \eqref{al: useful_repr_pos_sem} we have
		for any $F,G\in\mathcal{B}(S)$ that
		\begin{align*}
		\langle \mathrm{Q}_S \mathbf{1}_F,\mathbf{1}_G \rangle_S 
		= \sum_{k= 1}^\infty \mathbb{E}\Big[ \prod_{i=1}^{k-1} \mathbf{1}_{S^c}(\Gamma_i)\,\mathcal{U}_{I(\Gamma_k^{\min},\Gamma_k^{\max})}(F)\; \mathcal{U}_{I(\Gamma_k^{\min},\Gamma_k^{\max})}(G) \Big].
		\end{align*}
		For fixed $F\in \mathcal{B}(S)$,
		by virtue of the standard machinery of integration theory (exploiting linearity and monotone convergence),
		we extend the former representation from $g=\mathbf{1}_G$ to non-negative $g\in L^2_S$, such that
		\begin{align*}
		\langle \mathrm{Q}_S \mathbf{1}_F,g \rangle_S 
		= \sum_{k= 1}^\infty \mathbb{E}\Big[ \prod_{i=1}^{k-1} \mathbf{1}_{S^c}(\Gamma_i)\,\mathcal{U}_{I(\Gamma_k^{\min},\Gamma_k^{\max})}(F)\; \mathcal{U}_{I(\Gamma_k^{\min},\Gamma_k^{\max})}(g) \Big],
		\end{align*}
		with $\mathcal{U}_{I(\Gamma_k^{\min},\Gamma_k^{\max})}(g):=\int_{I(\Gamma_k^{\min},\Gamma_k^{\max})} g(\theta) \mathcal{U}_{I(\Gamma_k^{\min},\Gamma_k^{\max})}({\rm d} \theta)$. For fixed non-negative $g\in L_S^2$, again by the standard machinery of integration theory, we generalize the latter identity from $f=\mathbf{1}_F$ to non-negative $f\in L_S^2$, such that
		\begin{align*}
		\langle \mathrm{Q}_S f,g \rangle_S 
		= \sum_{k= 1}^\infty \mathbb{E}\Big[ \prod_{i=1}^{k-1} \mathbf{1}_{S^c}(\Gamma_i)\,\mathcal{U}_{I(\Gamma_k^{\min},\Gamma_k^{\max})}(f)\; \mathcal{U}_{I(\Gamma_k^{\min},\Gamma_k^{\max})}(g) \Big].
		\end{align*} 
		Eventually, we also have the latter expression for $f,g\in L_S^2$ by exploiting the bilinearity of the inner product as well as that $f,g$ can be written in terms of the difference of their positive and negative parts. From this last representation it readily follows that $\langle \mathrm{Q}_S f,f\rangle_S   \geq 0$ for all $f \in L_S^2$.
\end{proof}
We finish this section with stating a pushforward invariance property of the transition kernel $Q_S$. 
For general properties regarding pushforward transition kernels we refer to \cite{rudolf2022robust}.
\begin{lem}
	\label{lem: pushforward_invariance}
	Let $S\in \mathcal{B}([0,2\pi))$ be open on the circle and non-empty. For $\theta\in S$ define the function $g_\theta \colon [0,2\pi) \to [0,2\pi)$ by $g_\theta(\alpha) = (\theta - \alpha) \mod 2\pi$. Then, the shrinkage kernel $Q_S$ satisfies
	\[
	Q_{g_\theta^{-1}(S)}(g_\theta^{-1}(\alpha),g_\theta^{-1}(B)) = Q_S(\alpha,B), \qquad \alpha \in S,\, B\in \mathcal{B}(S).
	\]
\end{lem}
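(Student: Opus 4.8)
The plan is to exploit that $g_\theta$ is an orientation-reversing isometry of the circle --- in fact a Borel-measurable involution, $g_\theta\circ g_\theta=\mathrm{id}$ --- and that every ingredient used to assemble the shrinkage kernel $Q_S$ (the initial law \eqref{eq: init_distr}, the unstopped transition kernel $R_\theta$, the stopping region $S\times[0,2\pi)^2$) is built solely from circle-uniform distributions on generalized intervals and Dirac masses. Such objects transform covariantly under any circle isometry, so $Q_S$ should as well, and the claim is exactly the instance of this covariance associated with $g_\theta$. Note at the outset that, since $g_\theta$ is a homeomorphism of the circle, $g_\theta^{-1}(S)=g_\theta(S)$ is again open on the circle and non-empty and $g_\theta^{-1}(\alpha)=g_\theta(\alpha)\in g_\theta(S)$, so by Theorem~\ref{thm: shrink_reversible} (and the construction preceding it) both sides of the asserted identity are well defined; moreover by Lemma~\ref{lem: as_finite_st} the relevant stopping times are a.s.\ finite, so the ``$\tau<\infty$'' truncations may be dropped.

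First I would record the action of $g_\theta$ on the building blocks: $g_\theta$ pushes the circle Lebesgue measure forward to itself; the set $g_\theta(I(\alpha,\beta))$ coincides with $I(g_\theta(\beta),g_\theta(\alpha))$ up to the at most two endpoints $\{g_\theta(\alpha),g_\theta(\beta)\}$, and likewise $g_\theta(J(\alpha,\beta))$ with $J(g_\theta(\beta),g_\theta(\alpha))$; consequently $(g_\theta)_\ast\mathcal{U}_{I(\alpha,\beta)}=\mathcal{U}_{I(g_\theta(\beta),g_\theta(\alpha))}$. Next I would lift $g_\theta$ to $\widehat g_\theta\colon[0,2\pi)^3\to[0,2\pi)^3$, $\widehat g_\theta(\gamma,\gamma^{\min},\gamma^{\max}):=(g_\theta(\gamma),g_\theta(\gamma^{\max}),g_\theta(\gamma^{\min}))$ --- the interchange of the $\gamma^{\min}$- and $\gamma^{\max}$-slots encodes the orientation reversal. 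One checks that $\widehat g_\theta$ is a Borel involution, that it sends $\Lambda_\theta$ into $\Lambda_{g_\theta(\theta)}$ up to a null set, that the initial law \eqref{eq: init_distr} is $\widehat g_\theta$-invariant, and --- the core computation --- that the kernel $R_\theta$ of the unstopped shrinkage sequence (Lemma~\ref{lem: repres_R_theta}) is equivariant,
\[
R_{g_\theta(\theta)}\bigl(\widehat g_\theta(z),\widehat g_\theta(C)\bigr)=R_\theta(z,C),
\]
for every $z$ in a co-null subset of $\Lambda_\theta$ (those $z$ at which the anchor $\theta$ is not an endpoint of the tracked interval, a property that holds a.s.\ along the chain) and every $C\in\mathcal{B}([0,2\pi)^3)$. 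This follows by inserting the interval identities above into the explicit two-term formula for $R_\theta$; the two summands get exchanged, which is precisely the statement that the ``if''- and ``else''-branches of Algorithm~\ref{alg:shrink} swap under the orientation reversal (one matches the branch conditions via the decomposition $I(\gamma^{\min},\gamma^{\max})=J(\gamma^{\min},\gamma)\sqcup I(\gamma,\gamma^{\max})$ used in the proof of Lemma~\ref{lem: repres_R_theta}, together with Lemma~\ref{lem: I_interval_prop}).

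Having all ingredients equivariant, I would conclude by a coupling. Drive the unstopped sequence $(Z_n)$ with anchor $\alpha$ and, on the same probability space, a second run $(Z'_n)$ with anchor $g_\theta(\alpha)$ fed the reflected inputs $g_\theta(\Gamma_n)$; these have the correct conditional laws, since $g_\theta$ preserves circle-uniform distributions and, by the interval identities above, carries each conditional sampling distribution of the first chain to the corresponding one of the second. Then $Z'_n=\widehat g_\theta(Z_n)$ for all $n$ almost surely. Since $g_\theta$ is a bijection, $\Gamma_n\in S\iff g_\theta(\Gamma_n)\in g_\theta(S)$, so the two shrinkage loops stop at the same (a.s.\ finite) time and the stopped first coordinates satisfy $\Gamma'_{\tau_{g_\theta(S)}}=g_\theta(\Gamma_{\tau_S})$ a.s. Hence for $B\in\mathcal{B}(S)$,
\[
Q_{g_\theta(S)}\bigl(g_\theta(\alpha),g_\theta(B)\bigr)=\mathbb{P}\bigl(g_\theta(\Gamma_{\tau_S})\in g_\theta(B)\bigr)=\mathbb{P}(\Gamma_{\tau_S}\in B)=Q_S(\alpha,B),
\]
which, as $g_\theta^{-1}=g_\theta$, is the assertion.

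The step I expect to be the main obstacle is the bookkeeping around the orientation reversal: getting the $\gamma^{\min}\leftrightarrow\gamma^{\max}$ interchange in $\widehat g_\theta$ right and verifying that under it the case distinction in line~4 of Algorithm~\ref{alg:shrink} maps consistently to the opposite branch together with the updates in lines~5 and~7; and, technically, observing that the discrepancies between $g_\theta(I(\alpha,\beta))$ and $I(g_\theta(\beta),g_\theta(\alpha))$ are confined to boundary points, so the pointwise equivariance of $R_\theta$ (and of the membership tests) may genuinely fail on a null set but holds along the chain almost surely --- which is why the argument is run as a coupling rather than as a pointwise kernel identity. Alternatively one may factor $g_\theta$ through the rotation $\alpha\mapsto(\alpha+\theta)\bmod 2\pi$ and the reflection $\alpha\mapsto(-\alpha)\bmod 2\pi$ and establish covariance for each factor separately, which trades one reflection argument for two shorter ones but does not remove the essential bookkeeping.
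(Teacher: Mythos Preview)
Your proposal is correct and follows essentially the same route as the paper: both lift $g_\theta$ to the orientation-reversing involution $(\gamma,\gamma^{\min},\gamma^{\max})\mapsto(g_\theta(\gamma),g_\theta(\gamma^{\max}),g_\theta(\gamma^{\min}))$ on $[0,2\pi)^3$, check invariance of the initial law \eqref{eq: init_distr} and equivariance of $R_\alpha$ (the paper handles the endpoint issue you anticipate by introducing open intervals $I^\circ(\alpha,\beta)$ and an explicit full-measure set $G_\alpha\subset\Lambda_\alpha$ on which the kernel identity holds pointwise), and then pass this to $Q_S$. The only cosmetic difference is that the paper establishes the resulting identity for the finite-dimensional joint laws $\mathbb{P}_{Z_1}\otimes R_\alpha^{\otimes(k-1)}$ by induction on $k$, whereas you phrase the same step as a pathwise coupling.
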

\noindent
The proof of the former lemma is shifted to the appendix, see Section~\ref{sec: proof_pushforward_invariance}.
\begin{rem}
	The transition kernel $Q_S$ is not only of interest within ESS. The shrinkage procedure can be used in other Markov chain Monte Carlo methods as well to replace uniform sampling from subsets of a set that can be parameterized by $[0,2\pi)$.
	In fact geodesic slice sampling on the sphere (see \cite{habeck2023geodesic}) and Gibbsian polar slice sampling (see \cite{schaer2023gibbsian}) employ shrinkage to generate points from great circles of a $d-1$-dimensional Euclidean unit sphere.
	Of course in such scenarios one can exploit all properties of the shrinkage kernel $Q_S$ established in this section.
\end{rem}
\section{Reversibility and positive semi-definiteness of elliptical slice sampling}
\label{sec:rev_ess}	 

With the representation of the transition mechanism of the shrinkage procedure from Algorithm~\ref{alg:shrink} in terms of the transition kernel $Q_S$ we are able to state the transition kernel, say $H$, of elliptical slice sampling that corresponds to the transition mechanism of Algorithm~\ref{alg:ess_ala_ana}. For all $x_{\text{in}}\in \mathcal{H}$ and $A\in\mathcal{B}(\mathcal{H})$ it is given as
\begin{equation}
\label{eq: trans_ESS}
H(x_{\text{in}},A)
= \frac{1}{\rho(x_{\text{in}})}
\int_0^{\rho(x_{\text{in}})} \int_{\mathcal{H}} Q_{S(x_{\text{in}},w,t)}(0,p^{-1}_{x_{\text{in}},w}(\mathcal{H}(t)\cap A))\,  \mu_0({\rm d}w) {\rm d}t,
\end{equation}
where $S(x_{\text{in}},w,t):= p^{-1}_{x_{\text{in}},w}(\mathcal{H}(t))$ for arbitrary $w\in\mathcal{H}$ and $t\in (0,\infty)$.
We also consider the linear operator that is induced by $H$. For this define the Hilbert space
	\[
	L_\mu^2 := \{ f: \mathcal{H} \to \mathbb{R} \colon \int_{\mathcal{H}} f(x)^2 \mu(\dint x) < \infty\}
	\] 
	equipped with the inner product
	$\langle f,g\rangle_{\mu} := \int_{\mathcal{H}} f(x) g(x) \mu(\dint x)$ for $f,g \in L_\mu^2$.	
	Then, $H$ can be extended to a linear operator $\mathrm{H} \colon L_\mu^2 \to L_\mu^2$ given by	
	%Note that $H$ induces a linear operator 
	\begin{equation}\label{eq: ellip_operator}
	%		H: L_\mu^2 \to L_\mu^2, \qquad 
	f \mapsto \left[ x \mapsto \int_{\mathcal{H}}f(y) H(x, \dint y)\right], \quad f\in L_\mu^2.
	\end{equation}
Here we verify that the reversibility and the positive semi-definiteness of the shrinkage procedure w.r.t.~the uniform distribution on $S(x_{\text{in}},w,t)$ carries over to the reversibility of $H$ w.r.t. $\mu$ and the positive semi-definiteness of $\mathrm{H}$.
\begin{thm}\label{thm: ellip_reversible}
	Let $\mathcal{H}$ be a Hilbert space and $C: \mathcal{H} \to \mathcal{H}$ be a non-singular covariance operator defining $\mu_0 = \mathcal{N}(0,C)$.
		Moreover, let $\rho \colon \mathcal{H} \to (0,\infty)$ be lower-semicontinuous such that $Z:= \int_{\mathcal{H}}\rho(x)\mu_0(\dint x)< \infty$. Then, the transition kernel $H$ of the elliptical slice sampler given in \eqref{eq: def shrinkage operator} is reversible w.r.t. 
		\[
		\mu(\dint x)= \frac{1}{Z}\rho(x)\mu_0(\dint x).
		\]
		Moreover, the linear operator $\mathrm{H} \colon L_\mu^2\to L_\mu^2$ induced by $H$, introduced in \eqref{eq: ellip_operator}, is positive semi-definite.
\end{thm}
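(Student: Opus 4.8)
The plan is to strip off the level variable and the Gaussian auxiliary variable and thereby reduce everything to the shrinkage kernel, for which reversibility and positive semi-definiteness are already in hand (Theorem~\ref{thm: shrink_reversible}, Lemma~\ref{lem: positive semidefiniteness}), with Lemma~\ref{lem: pushforward_invariance} serving as the bridge. First I would substitute $\mu(\dint x)=\frac1Z\rho(x)\mu_0(\dint x)$ into $\int_B H(x,A)\,\mu(\dint x)$; the prefactor $\rho(x)^{-1}$ in \eqref{eq: trans_ESS} cancels the $\rho(x)$ coming from $\mu$, and Tonelli (the integrand is a kernel value, hence nonnegative) together with $\{t<\rho(x)\}=\{x\in\mathcal H(t)\}$ pulls the $t$-integral outside, giving $\int_B H(x,A)\,\mu(\dint x)=\frac1Z\int_0^\infty I_t(A,B)\,\dint t$ with
\[
I_t(A,B):=\int_{\mathcal H}\int_{\mathcal H}\mathbf 1_{\mathcal H(t)\cap B}(x)\,Q_{S(x,w,t)}\big(0,p_{x,w}^{-1}(\mathcal H(t)\cap A)\big)\,\mu_0(\dint w)\,\mu_0(\dint x).
\]
Reversibility of $H$ then follows once $I_t(A,B)=I_t(B,A)$ is shown for every $t>0$; lower semi-continuity of $\rho$ makes each $S(x,w,t)$ open on the circle, so $Q_{S(x,w,t)}$ is available wherever $S(x,w,t)\neq\emptyset$ and the remaining contributions vanish.

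The key idea is to use the rotational symmetry of $\mu_0\otimes\mu_0$ to replace the fixed shrinkage anchor $0$ by an integrated one. For $\psi\in[0,2\pi)$ set $R_\psi(x,w):=(\cos\psi\,x+\sin\psi\,w,\ \sin\psi\,x-\cos\psi\,w)$; this is an orthogonal involution of $\mathcal H\times\mathcal H$ commuting with $C\oplus C$, hence $\mu_0\otimes\mu_0$-preserving, and its first coordinate is $p_{x,w}(\psi)$. An elementary computation gives $p_{R_\psi(x,w)}(\phi)=p_{x,w}\big(g_\psi(\phi)\big)$ with $g_\psi$ the reflection from Lemma~\ref{lem: pushforward_invariance}, whence $S(R_\psi(x,w),t)=g_\psi^{-1}(S(x,w,t))$ and $p_{R_\psi(x,w)}^{-1}(\mathcal H(t)\cap A)=g_\psi^{-1}\big(p_{x,w}^{-1}(\mathcal H(t)\cap A)\big)$. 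Invoking $R_\psi$-invariance for each fixed $\psi$ and averaging over $\psi\sim\mathcal U_{[0,2\pi)}$ turns the weight $\mathbf 1_{\mathcal H(t)}(x)$ into $\mathbf 1_{S(x,w,t)}(\psi)$, so that
\[
I_t(A,B)=\frac1{2\pi}\int_{\mathcal H}\int_{\mathcal H}\int_{S(x,w,t)}\mathbf 1_{B}\big(p_{x,w}(\psi)\big)\,Q_{S(R_\psi(x,w),t)}\big(0,p_{R_\psi(x,w)}^{-1}(\mathcal H(t)\cap A)\big)\,\dint\psi\,\mu_0(\dint w)\,\mu_0(\dint x).
\]
Now Lemma~\ref{lem: pushforward_invariance}, applied with $S=S(x,w,t)$ and $\theta=\alpha=\psi\in S$ (so $g_\psi^{-1}=g_\psi$, $g_\psi(\psi)=0$), rewrites the inner kernel as $Q_{S(x,w,t)}\big(\psi,p_{x,w}^{-1}(\mathcal H(t)\cap A)\big)$. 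Setting $F:=p_{x,w}^{-1}(\mathcal H(t)\cap A)$, $G:=p_{x,w}^{-1}(\mathcal H(t)\cap B)$, both in $\mathcal B(S(x,w,t))$, and using $\mathbf 1_B(p_{x,w}(\psi))=\mathbf 1_G(\psi)$ on $S(x,w,t)$, the $\psi$-integral equals $\lambda(S(x,w,t))\int_G Q_{S(x,w,t)}(\psi,F)\,\mathcal U_{S(x,w,t)}(\dint\psi)$, which is symmetric in $F\leftrightarrow G$, i.e.\ in $A\leftrightarrow B$, by reversibility of $Q_{S(x,w,t)}$ w.r.t.\ $\mathcal U_{S(x,w,t)}$ (Theorem~\ref{thm: shrink_reversible}). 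Hence $I_t(A,B)=I_t(B,A)$, proving reversibility of $H$ w.r.t.\ $\mu$.

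For positive semi-definiteness the same unfolding yields, for $f\in L_\mu^2$ and $\tilde f_{x,w}(\theta):=f(p_{x,w}(\theta))$, the identity $\langle\mathrm H f,f\rangle_\mu=\frac1Z\int_0^\infty\int_{\mathcal H}\int_{\mathcal H}\mathbf 1_{\mathcal H(t)}(x)\,\tilde f_{x,w}(0)\,(\mathrm Q_{S(x,w,t)}\tilde f_{x,w})(0)\,\mu_0(\dint w)\,\mu_0(\dint x)\,\dint t$; spreading the anchor exactly as above, now using $\tilde f_{R_\psi(x,w)}=\tilde f_{x,w}\circ g_\psi$ and Lemma~\ref{lem: pushforward_invariance} to convert $(\mathrm Q_{S(R_\psi(x,w),t)}\tilde f_{R_\psi(x,w)})(0)$ into $(\mathrm Q_{S(x,w,t)}\tilde f_{x,w})(\psi)$, gives
\[
\langle\mathrm H f,f\rangle_\mu=\frac1{2\pi Z}\int_0^\infty\int_{\mathcal H}\int_{\mathcal H}\lambda\big(S(x,w,t)\big)\,\big\langle\mathrm Q_{S(x,w,t)}\tilde f_{x,w},\tilde f_{x,w}\big\rangle_{S(x,w,t)}\,\mu_0(\dint w)\,\mu_0(\dint x)\,\dint t\ \ge\ 0
\]
by Lemma~\ref{lem: positive semidefiniteness}. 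The main obstacle is the bookkeeping around $R_\psi$: verifying it preserves $\mu_0\otimes\mu_0$, tracking precisely how $\mathcal H(t)$, the ellipse $p_{x,w}$ and the good set $S(x,w,t)$ transform under it, and checking that each use of Lemma~\ref{lem: pushforward_invariance} meets its hypotheses $\theta\in S$, $B\in\mathcal B(S)$—so that the shrinkage anchor is honestly transported from the fixed value $0$ to the integration variable $\psi\in S$. The remaining measure-theoretic points (Tonelli, well-definedness of $Q_{S(x,w,t)}$ on the relevant event, and $\tilde f_{x,w}\in L_{S(x,w,t)}^2$ for a.e.\ $(x,w,t)$, which one gets first for bounded $f$ and then by approximation) are routine.
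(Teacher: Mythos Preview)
Your proposal is correct and follows essentially the same route as the paper: the transformation you call $R_\psi$ is exactly the paper's $T^{(\theta)}$ (Lemma~\ref{lem: same_distr_same_expec}), your identity $p_{R_\psi(x,w)}=p_{x,w}\circ g_\psi$ and the consequence $S(R_\psi(x,w),t)=g_\psi^{-1}(S(x,w,t))$ are the ``preliminary considerations'' opening the proof of Lemma~\ref{lem: inner_prod_ellip}, and your use of Lemma~\ref{lem: pushforward_invariance} to transport the anchor from $0$ to $\psi$ mirrors that proof verbatim. The only cosmetic difference is the ``active one'' you insert: you average over $\psi\sim\mathcal U_{[0,2\pi)}$ and pick up a factor $\lambda(S)/2\pi$, whereas the paper inserts $\lambda(S)^{-1}\int_S\dint\theta=1$ and retains the indicator $\mathbf 1_{\mathcal H(t)}(x)$ in the final representation; both lead to an integrand manifestly symmetric in $A,B$ (resp.\ nonnegative) via Theorem~\ref{thm: shrink_reversible} and Lemma~\ref{lem: positive semidefiniteness}.
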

We provide some auxiliary tools before proving the main theorem.
\begin{lem}
	\label{lem: same_distr_same_expec}
	Let $X$ and $Y$ be independent random variables mapping to the Hilbert space $\mathcal{H}$, each distributed according to $\mu_0=\mathcal{N}(0,C)$, with $C\colon \mathcal{H} \to \mathcal{H}$ being a non-singular covariance operator. For any $\theta\in[0,2\pi)$ let 
	$T^{(\theta)} \colon \mathcal{H}\times\mathcal{H} \to \mathcal{H}\times\mathcal{H} $ be given by
	\[
	T^{(\theta)} (x,y) := (x \cos \theta + y \sin \theta, x\sin \theta - y \cos \theta).
	\]
	Then
	\begin{align}
	\label{eq: same_distr_same_expec}
	\mathbb{E}(f(\theta,X,Y)) = \mathbb{E}(f(\theta,T^{(\theta)}(X,Y))), \quad \theta\in [0,2\pi),
	\end{align}
	for any $f\colon [0,2\pi) \times \mathcal{H}^2 \to \mathbb{R}$ for which one of the expectations exists, i.e., the product measure of two identically centered normal distributions is invariant under $T^{(\theta)}$.
\end{lem}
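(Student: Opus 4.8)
The plan is to reduce everything to the distributional identity that $T^{(\theta)}(X,Y)$ has the same law as $(X,Y)$ in $\mathcal{H}\times\mathcal{H}$. Granting this, \eqref{eq: same_distr_same_expec} follows from the change-of-variables formula applied to the law of $(X,Y)$: for fixed $\theta$ and non-negative measurable $f(\theta,\cdot,\cdot)$ both sides equal $\int_{\mathcal{H}^2} f(\theta,x,y)\,(\mu_0\otimes\mu_0)({\rm d}x\,{\rm d}y)$ --- the right-hand side precisely because $T^{(\theta)}$ pushes $\mu_0\otimes\mu_0$ forward onto itself --- and for general $f$ the asserted identity, together with the ``provided one of the expectations exists'' clause, follows by decomposing $f$ into its positive and negative parts.

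So the real task is to prove that $\mu_0\otimes\mu_0$ is invariant under $T^{(\theta)}$. I would view $\mathcal{H}\times\mathcal{H}$ as a separable Hilbert space with the product inner product $\langle (x,y),(x',y')\rangle = \langle x,x'\rangle_{\mathcal{H}} + \langle y,y'\rangle_{\mathcal{H}}$, note that $T^{(\theta)}$ is linear and bounded on it, and recall that $\mu_0\otimes\mu_0$ is the centered Gaussian measure on $\mathcal{H}\times\mathcal{H}$ whose covariance operator $\mathcal{C}$ is given by $\mathcal{C}(x,y)=(Cx,Cy)$ (the cross-covariances vanish by independence and centering of $X$ and $Y$). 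Since a bounded linear image of a centered Gaussian measure on a separable Hilbert space is again a centered Gaussian measure, with covariance operator transformed according to $L\mapsto L\,\mathcal{C}\,L^{*}$, and since two centered Gaussian measures on such a space agree as soon as their covariance operators coincide, it suffices to verify the operator identity $T^{(\theta)}\,\mathcal{C}\,(T^{(\theta)})^{*}=\mathcal{C}$.

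This last step is a routine computation resting on the structure of $T^{(\theta)}$ as the ``block-scalar'' operator associated with the real matrix $M_\theta$ having first row $(\cos\theta,\sin\theta)$ and second row $(\sin\theta,-\cos\theta)$, which mixes the two copies of $\mathcal{H}$ but does not act within a single copy: one checks directly that $T^{(\theta)}$ is self-adjoint, that $(T^{(\theta)})^{2}=\mathrm{Id}$ (because $M_\theta$ is an involution), and that $\mathcal{C}$ commutes with $T^{(\theta)}$ (since $\mathcal{C}$ acts as $C$ inside each copy and $T^{(\theta)}$ does not move mass within a copy). Hence $T^{(\theta)}\,\mathcal{C}\,(T^{(\theta)})^{*}=\mathcal{C}\,T^{(\theta)}(T^{(\theta)})^{*}=\mathcal{C}$, as required. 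Equivalent ways to see this, if one prefers to sidestep the covariance-operator characterisation, are to compare characteristic functionals --- $\widehat{(T^{(\theta)})_{*}(\mu_0\otimes\mu_0)}(\xi)=\exp(-\frac{1}{2}\langle T^{(\theta)}\mathcal{C}(T^{(\theta)})^{*}\xi,\xi\rangle)$ --- and rerun the same $2\times2$ algebra, or to expand $X$ and $Y$ in an orthonormal eigenbasis of $C$ and use rotation invariance of $\mathcal{N}(0,I_2)$ in each eigen-coordinate pair together with independence across coordinates.

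I expect the only point requiring genuine care to be the infinite-dimensional bookkeeping --- correctly invoking that continuous linear images of Gaussian measures on separable Hilbert spaces are Gaussian with the transported covariance operator, and that centered Gaussian measures are determined by that operator --- whereas the algebraic core amounts to nothing more than the observation that $M_\theta$ is an orthogonal involution acting between, but not within, the two factors, so the prior covariance $C$ is left entirely untouched.
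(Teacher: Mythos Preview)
Your proposal is correct and follows essentially the same route as the paper: both arguments view $(X,Y)$ as a centered Gaussian on $\mathcal{H}\times\mathcal{H}$ with block-diagonal covariance $\mathcal{C}=\mathrm{diag}(C,C)$, invoke the linear transformation theorem for Gaussian measures, and verify that $T^{(\theta)}\mathcal{C}(T^{(\theta)})^{*}=\mathcal{C}$ (the paper by direct block-matrix multiplication, you via the slightly slicker observation that $T^{(\theta)}$ is a self-adjoint involution commuting with $\mathcal{C}$). Your explicit remark on passing from the distributional identity to the expectation identity via positive/negative parts is a detail the paper leaves implicit.
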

\begin{proof}
	By the fact that $X,Y\sim \mu_0=\mathcal{N}(0,C)$ are independent, we have that
	the random vector $\left(\begin{matrix} X\\ Y \end{matrix}\right)$ on $\mathcal{H} \times \mathcal{H}$ is distributed according to 
	$
	N\left(\left(\begin{matrix}
	0 \\ 0
	\end{matrix}\right),\left(
	\begin{matrix}
	C & 0\\
	0 & C
	\end{matrix}
	\right)
	\right).	
	$
	Note that 
	$
	T^{(\theta)} (x,y)^t = \left(
	\begin{matrix}
	\cos \theta I & \sin \theta I\\
	\sin \theta I & -\cos \theta I
	\end{matrix}
	\right) 
	\left(
	\begin{matrix}
	x\\y
	\end{matrix}
	\right),
	$
	where $I\colon \mathcal{H} \to 
	\mathcal{H}$ denotes the identity operator.
	Thus, by the linear transformation theorem for Gaussian measures, see e.g. \cite[Proposition~1.2.3]{da2002second}, we obtain that the vector
	$T^{(\theta)}(X,Y)^t$ is distributed according to
	\begin{align*}
	&	N\left(\left(
	\begin{matrix}
	\cos \theta I & \sin \theta I\\
	\sin \theta I & -\cos \theta I
	\end{matrix}
	\right) 
	\left(\begin{matrix}
	0 \\ 0
	\end{matrix}\right)
	,\left(
	\begin{matrix}
	\cos \theta I & \sin \theta I\\
	\sin \theta I & -\cos \theta I
	\end{matrix}
	\right) \left(\begin{matrix}
	C & 0\\
	0 & C
	\end{matrix}\right) \left(
	\begin{matrix}
	\cos \theta I & \sin \theta I\\
	\sin \theta I & -\cos \theta I
	\end{matrix}
	\right) \right)
%	\\
%	&\qquad \qquad  
	= 	N\left(\left(\begin{matrix}
	0 \\ 0
	\end{matrix}\right),\left(
	\begin{matrix}
	C & 0\\
	0 & C
	\end{matrix}
	\right)\right).
	\end{align*}
	Hence, the distributions of $(X,Y)$ and $T^{(\theta)} (X,Y)$ coincide, such that \eqref{eq: same_distr_same_expec} holds.
\end{proof}
The following useful representation of the inner product allows us to leverage the results for the shrinkage procedure to the elliptical slice sampler.
	\begin{lem}\label{lem: inner_prod_ellip}
		For function $f: \mathcal{H} \to \mathbb{R}$ define 
		\[
		f_{x,y,t}: [0, 2\pi) \to \mathbb{R}, \quad \theta \mapsto \mathbf{1}_{S(x,y,t)}(\theta)(f \circ p_{x,y})(\theta), \quad \quad x,y \in \mathcal{H}, t \in (0, \infty).
		\]
		Under the assumptions of Theorem \ref{thm: ellip_reversible} we have for all $f \in L_\mu^2$ that $f_{x,y,t} \in L_{S(x,y,t)}^2$ for almost all $(x,y,t) \in \{(x,y,t) \in\mathcal{H}^2 \times (0, \infty) \colon x \in \mathcal{H}(t)\}$ w.r.t. $\mu_0\otimes\mu_0\otimes \lambda$.
		Moreover, for all $f,g \in L_\mu^2$ holds
		\begin{equation}\label{eq: inner_prod_ellip}
		\langle \mathrm{H}f,g\rangle_{\mu} =
		\frac{1}{Z} \int_{0}^{\infty} \int_{\mathcal{H}} \int_{\mathcal{H}(t)} \langle \mathrm{Q}_{S(x,y,t)}f_{x,y,t},g_{x,y,t}\rangle_{S(x,y,t)}\  \mu_0(\dint x) \mu_0(\dint y) \dint t. 
		\end{equation}
\end{lem}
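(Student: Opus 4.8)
The plan is to unfold the definition of $\langle \mathrm{H}f,g\rangle_\mu$ and transform the $\theta$-integral over $[0,2\pi)$ into an expectation over the ellipse parametrization so that the shrinkage kernel $Q_S$ appears directly. First I would write, using \eqref{eq: ellip_operator} and the definition of $\mu$,
\[
\langle \mathrm{H}f,g\rangle_\mu = \frac{1}{Z}\int_{\mathcal{H}} g(x)\,\rho(x)\int_{\mathcal{H}} f(y)\,H(x,\dint y)\,\mu_0(\dint x),
\]
and then substitute the explicit form \eqref{eq: trans_ESS} of $H$. Since $Q_{S(x,w,t)}(0,\cdot)$ is a probability measure on $S(x,w,t)\subseteq[0,2\pi)$ and $p^{-1}_{x,w}$ pushes forward points on the ellipse, the inner integral $\int_{\mathcal{H}} f(y)\,H(x,\dint y)$ should be rewritten as
\[
\frac{1}{\rho(x)}\int_0^{\rho(x)}\int_{\mathcal{H}}\int_{S(x,w,t)} (f\circ p_{x,w})(\theta)\,Q_{S(x,w,t)}(0,\dint\theta)\,\mu_0(\dint w)\,\dint t,
\]
using that $\theta\in p^{-1}_{x,w}(\mathcal{H}(t)\cap A)$ exactly when $p_{x,w}(\theta)\in A$ and $\theta\in S(x,w,t)$; this is a change-of-variables/pushforward identity for the kernel. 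Noting $\rho(x)=\int_0^\infty \mathbf{1}_{\mathcal{H}(t)}(x)\,\dint t$, the factor $1/\rho(x)$ cancels and the $x$-integral is restricted to $\mathcal{H}(t)$, giving
\[
\langle \mathrm{H}f,g\rangle_\mu = \frac{1}{Z}\int_0^\infty\int_{\mathcal{H}}\int_{\mathcal{H}(t)} g(x)\int_{S(x,w,t)}(f\circ p_{x,w})(\theta)\,Q_{S(x,w,t)}(0,\dint\theta)\,\mu_0(\dint x)\,\mu_0(\dint w)\,\dint t.
\]

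The next step is to recognize the inner double object as $\langle \mathrm{Q}_{S(x,w,t)} f_{x,w,t}, \cdot\rangle$-like, but the mismatch is that $Q_S$ is evaluated only at the anchor $0$, whereas the inner product $\langle \mathrm{Q}_S f_{x,w,t}, g_{x,w,t}\rangle_S$ integrates the starting point against $\mathcal{U}_S$. This is where Lemma~\ref{lem: same_distr_same_expec} and the pushforward invariance Lemma~\ref{lem: pushforward_invariance} enter. The idea: replacing the fixed anchor $0$ by a uniformly chosen anchor costs nothing because, after applying the rotation $T^{(\theta)}$ of Lemma~\ref{lem: same_distr_same_expec} to the pair $(x,w)$ (treating $x$ as Gaussian on $\mathcal{H}(t)$ is legitimate since we integrate $x$ against $\mu_0$ restricted to $\mathcal{H}(t)$, with the extra weight $g(x)$ absorbed), the base point $0$ on the ellipse through $(x,w)$ corresponds, under the reparametrization $\theta\mapsto(\theta-\alpha)\bmod 2\pi$, to the base point $\alpha$ on the ellipse through $T^{(\alpha)}(x,w)$; Lemma~\ref{lem: pushforward_invariance} then shows $Q_{S}(0,\cdot)$ at the rotated ellipse equals $Q_{g_\alpha^{-1}(S)}(g_\alpha^{-1}(\alpha),g_\alpha^{-1}(\cdot))$ with $g_\alpha^{-1}(\alpha)=0$ identified appropriately. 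Averaging over $\alpha\sim\mathcal{U}_{[0,2\pi)}$ and combining with $\mathcal{U}_{S}$ being the normalized restriction of $\lambda$, one obtains exactly
\[
\int_{\mathcal{H}(t)} g(x)\int_{S(x,w,t)}(f\circ p_{x,w})(\theta)\,Q_{S(x,w,t)}(0,\dint\theta)\,\mu_0(\dint x) = \int_{\mathcal{H}(t)}\langle \mathrm{Q}_{S(x,w,t)}f_{x,w,t},g_{x,w,t}\rangle_{S(x,w,t)}\,\mu_0(\dint x),
\]
after also symmetrizing in $(x,w)$ (the ellipse through $(x,w)$ and the one through $(w,-x)$ coincide up to reparametrization, which realigns $\mathbf{1}_{S(x,w,t)}$ with the $\mathcal{U}_{S}$-average). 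Plugging this back yields \eqref{eq: inner_prod_ellip}.

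Before that, the integrability claim $f_{x,w,t}\in L^2_{S(x,w,t)}$ for a.e.\ $(x,w,t)$ should be dispatched first, by a Tonelli argument: one computes
\[
\frac{1}{Z}\int_0^\infty\int_{\mathcal{H}}\int_{\mathcal{H}(t)}\int_{S(x,w,t)}(f\circ p_{x,w})(\theta)^2\,\mathcal{U}_{S(x,w,t)}(\dint\theta)\,\mu_0(\dint x)\,\mu_0(\dint w)\,\dint t,
\]
and shows it equals $\|f\|_\mu^2<\infty$ by the same anchor-averaging/rotation identity applied to $f^2$ in place of the bilinear expression (here $\mathcal{U}_{S}$ over the anchor plays the role of $Q_S$, since a one-step average recovers the uniform measure on $S$ after integrating out the ellipse direction). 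Finiteness of the triple integral then forces the inner integral to be finite for a.e.\ $(x,w,t)$, i.e.\ $f_{x,w,t}\in L^2_{S(x,w,t)}$ a.e., which also licenses all the Fubini/Tonelli swaps used above for general $f,g\in L^2_\mu$ (split into positive and negative parts).

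The main obstacle I anticipate is the bookkeeping in the anchor-randomization step: one must carefully match the parametrization of the ellipse through $(x,w)$ based at angle $0$ with the parametrization of the ellipse through $T^{(\alpha)}(x,w)$ based at angle $\alpha$, verify that $S(T^{(\alpha)}(x,w),t) = g_\alpha^{-1}(S(x,w,t))$ as subsets of $[0,2\pi)$, and confirm the measure-theoretic identifications so that Lemma~\ref{lem: pushforward_invariance} applies verbatim. Once these geometric identities are pinned down and the invariance of $\mu_0\otimes\mu_0$ under $T^{(\alpha)}$ from Lemma~\ref{lem: same_distr_same_expec} is invoked, the rest is routine Fubini.
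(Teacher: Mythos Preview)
Your approach is essentially the paper's: unfold $H$, insert an auxiliary angle variable (the ``active one'' $\int_0^{2\pi}\mathbf{1}_{S(x,w,t)}(\theta)\lambda(S(x,w,t))^{-1}\,\dint\theta=1$), apply the rotation invariance of $\mu_0\otimes\mu_0$ under $T^{(\theta)}$ (Lemma~\ref{lem: same_distr_same_expec}), use the geometric identity $S(T^{(\theta)}(x,w),t)=g_\theta^{-1}(S(x,w,t))$ together with the pushforward invariance (Lemma~\ref{lem: pushforward_invariance}) to move the anchor from $0$ to $\theta$, and recognize the resulting $\theta$-integral as $\langle \mathrm{Q}_{S}f_{x,w,t},g_{x,w,t}\rangle_S$. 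The integrability claim is handled by the same trick applied to $f^2$, exactly as you outline.

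One concrete slip to fix: your displayed identity
\[
\int_{\mathcal{H}(t)} g(x)\int_{S(x,w,t)}(f\circ p_{x,w})(\theta)\,Q_{S(x,w,t)}(0,\dint\theta)\,\mu_0(\dint x) = \int_{\mathcal{H}(t)}\langle \mathrm{Q}_{S(x,w,t)}f_{x,w,t},g_{x,w,t}\rangle_{S(x,w,t)}\,\mu_0(\dint x)
\]
cannot hold for fixed $w$, because $T^{(\alpha)}$ mixes $x$ and $w$. The rotation argument only works once you integrate over \emph{both} variables against $\mu_0\otimes\mu_0$; the indicator $\mathbf{1}_{\mathcal{H}(t)}(x)$ and the weight $g(x)$ must be carried inside the integrand (as part of your $h_t^{(2)}$) rather than treated as a restriction of the domain before rotating. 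After the rotation, $\mathbf{1}_{\mathcal{H}(t)}(p_{x,w}(\theta))$ becomes $\mathbf{1}_{S(x,w,t)}(\theta)$ and $g(p_{x,w}(\theta))\mathbf{1}_{S(x,w,t)}(\theta)$ becomes $g_{x,w,t}(\theta)$, while the indicator $\mathbf{1}_{S(T^{(\theta)}(x,w),t)}(\theta)=\mathbf{1}_{\mathcal{H}(t)}(x)$ restores the restriction $x\in\mathcal{H}(t)$. This is precisely the bookkeeping you flag as the main obstacle; once you keep the $w$-integral in play it goes through as in the paper.
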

\begin{proof}
	We start with some preliminary considerations that are repeatedly used within the proof.
		For arbitrary $\theta \in [0, 2\pi)$ let 
		%	the function
		$g_\theta(\alpha) := (\theta - \alpha)\mod 2\pi$ for $\alpha\in [0,2\pi)$ and note that,
		by using angle sum identities of trigonometric functions, we have
		\[
		p_{T^{(\theta)}(x,y)}(\alpha)
		=
		p_{x,y}(g_\theta(\alpha)),
		\qquad
		\forall \alpha \in [0, 2\pi).
		\]
		By exploiting the previous equality we have for all $ x,y \in \mathcal{H}$ and $t \in (0, \infty)$ that
		\[
		\alpha\in S(T^{(\theta)}(x,y),t) \quad \Longleftrightarrow \quad g_\theta(\alpha) \in S(x,y,t).
		\]
		Thus, $S(T^{(\theta)}(x,y),t) = g_\theta^{-1}(S(x,y,t))$. 
		In particular, we have $\lambda(S(T^{(\theta)}(x,y),t)) = \lambda(S(x,y,t))$, and $\mathbf{1}_{S(T^{(\theta)}(x,y),t)}(\theta) = \mathbf{1}_{S(x,y,t)}(0)=\mathbf{1}_{\mathcal{H}(t)}(x)$.
		
		For $f\in L_\mu^2$ and $X,Y$ being independent $\mu_0$-distributed random variables, by including `active ones', we obtain
		\begin{align*}
		\int_{\mathcal{H}} f(x)^2 \mu(\dint x) & = \frac{1}{Z}
		\int_0^{2\pi} \int_{0}^{\infty} \mathbb{E}[h^{(1)}_t(\theta,X,Y)] \dint t \dint \theta ,
		\end{align*} 
		with 
		$h_t^{(1)}(\theta,x,y) = \mathbf{1}_{\mathcal{H}(t)}(x) \mathbf{1}_{S(x,y,t)}(\theta)f(x)^2\lambda(S(x,y,t))^{-1}$, where we also used that $\mathbf{1}_{(0, \rho(x))}(t) = \mathbf{1}_{\mathcal{H}(t)}(x)$ and $\lambda(S(x,y,t))>0$. The latter holds because $S(x,y,t)$ is open on the circle and non-empty (at least for those $t$ occurring in the expression above) due to the lower semi-continuity of $\rho$.
		Then, Lemma \ref{lem: same_distr_same_expec} and the previous considerations yield
		\begin{align*}
		&	\int_{\mathcal{H}} f(x)^2 \mu(\dint x)  = \frac{1}{Z}
		\int_0^{2\pi} \int_{0}^{\infty} \mathbb{E}[h^{(1)}_t(\theta,T^{(\theta)}(X,Y))] \dint t \dint \theta \\
		=&\int_{0}^{2\pi}\int_{0}^{\infty}\int_{\mathcal{H}}\int_{\mathcal{H}} \frac{\mathbf{1}_{\mathcal{H}(t)}\left(p_{x,y}(\theta)\right) \mathbf{1}_{S(T^{(\theta)}(x,y),t)}(\theta)f\left(p_{x,y}(\theta)\right)^2}{\lambda(S(T^{(\theta)}(x,y),t))Z}\ \mu_0(\dint x)\, \mu_0(\dint y)\, \dint t \, \dint \theta\\
		=&\int_{0}^{2\pi}\int_{0}^{\infty}\int_{\mathcal{H}}\int_{\mathcal{H}} \frac{\mathbf{1}_{\mathcal{H}(t)}(x) f_{x,y,t}(\theta)^2}{\lambda(S(x,y,t))Z}\ \mu_0(\dint x)\, \mu_0(\dint y)\, \dint t \, \dint \theta\\
		=&\frac{1}{Z}\int_{0}^{\infty}\int_{\mathcal{H}}\int_{\mathcal{H}}\mathbf{1}_{\mathcal{H}(t)}(x) \int_{S(x,y,t)} f_{x,y,t}(\theta)^2\ \mathcal{U}_{S(x,y,t)}(\dint \theta) \mu_0(\dint x)\, \mu_0(\dint y)\, \dint t.
		\end{align*}
		This equality implies $f_{x,y,t} \in L_{S(x,y,t)}^2$ for $\mu_0\otimes \mu_0 \otimes \lambda$-almost all $(x,y,t) \in \{(x,y,t) \in\mathcal{H}^2 \times (0, \infty) \colon x \in \mathcal{H}(t)\}$.
		For  proving \eqref{eq: inner_prod_ellip} let $g \in L_\mu^2$. Similar as above, by the definition of $\mathrm{H}$ with
		\[
		\mathrm{Q}_{S(x,y,t)}(f\circ p_{x,y})(0)=\int_{S(x,y,t)} f\left(p_{x,y}(\alpha)\right)\, Q_{S(x,y,t)}(0, \dint \alpha),
		\]
		we obtain
		\begin{align*}
		&\langle \mathrm{H}f,g\rangle_\mu
		=
		\frac{1}{Z} \int_{\mathcal{H}} \mathrm{H}f(x) g(x) \rho(x)\ \mu_0(\dint x)\\
		&
		= \frac{1}{Z} \int_{0}^{\infty} \int_{\mathcal{H}} \int_{\mathcal{H}} g(x) \mathbf{1}_{\mathcal{H}(t)}(x) \mathrm{Q}_{S(x,y,t)}(f\circ p_{x,y})(0)\ \mu_0(\dint x)\mu_0(\dint y) \dint t\\
		& =  \frac{1}{Z} 	\int_{0}^{\infty} \int_{0}^{2\pi}\mathbb{E}\left[h_t^{(2)}(\theta, X, Y)\right] \  \dint \theta\,\dint t,
		\end{align*}
		where $X,Y$ are independent $\mu_0$-distributed random variables and
		\[
		h_t^{(2)}(\theta,x,y)
		=
		\mathbf{1}_{S(x,y,t)}(\theta) g(x) \mathbf{1}_{\mathcal{H}(t)}(x) 
		\mathrm{Q}_{S(x,y,t)}(f\circ p_{x,y})(0)\,
		\lambda(S(x,y,t))^{-1}.
		\]
		Then, by Lemma~\ref{lem: same_distr_same_expec} we have
		\begin{align*}
		\langle \mathrm{H}f,g\rangle_\mu
		&=  \frac{1}{Z} 
		\int_0^\infty
		\int_0^{2\pi}
		\mathbb{E}\left[h^{(2)}_{t}(\theta,T^{(\theta)}(X, Y))\right]
		\dint \theta
		\dint t\\
		&= \frac{1}{Z} 
		\int_0^\infty
		\int_{\mathcal{H}}
		\int_{\mathcal{H}}
		\int_0^{2\pi}
		h^{(2)}_{t}(\theta,T^{(\theta)}(x, y))\ 
		\dint \theta\,
		\mu_0(\dint x)\,
		\mu_0(\dint y)\,
		\dint t.
		\end{align*}
		Taking into account the considerations at the beginning of the proof, Lemma \ref{lem: pushforward_invariance} implies
		\begin{align*}
		& \mathrm{Q}_{S(T^{(\theta)}(x,y),t)}(f\circ p_{T^{(\theta)}(x,y)})(0)  
		=	\int_{0}^{2\pi} \mathbf{1}_{S(T^{(\theta)}(x,y), t)}(\alpha) f(p_{T^{(\theta)}(x,y)}(\alpha)) Q_{S(T^{(\theta)}(x,y),t)}(0, \dint \alpha)\\
		&= \int_{0}^{2\pi} \mathbf{1}_{S\left(x,y, t\right)}\left(g_\theta(\alpha)\right) f\left(p_{x,y}(g_\theta(\alpha))\right) Q_{g_\theta^{-1}(S(x,y,t))}(g_\theta^{-1}(\theta), \dint \alpha)\\
		&= \int_{0}^{2\pi} \mathbf{1}_{S\left(x,y, t\right)}\left(\alpha\right) f\left(p_{x,y}(\alpha)\right) Q_{S(x,y,t)}(\theta, \dint \alpha)\\
		&= \int_{S(x,y,t)} f_{x,y,t}(\alpha) Q_{S(x,y,t)}(\theta, \dint \alpha) = \mathrm{Q}_{S(x,y,t)}f_{x,y,t}(\theta),
		\end{align*}
		and further
		\begin{align*}
		h_t^{(2)}(\theta,T^{(\theta)}(x,y))
		&=
		\frac
		{
			\mathbf{1}_{S(T^{(\theta)}(x,y),t)}(\theta) \cdot g\left(p_{x,y}(\theta)\right) \cdot \mathbf{1}_{\mathcal{H}(t)}\left(p_{x,y}(\theta)\right) \cdot \mathrm{Q}_{S(x,y,t)}f_{x,y,t}(\theta)
		}
		{\lambda(S(T^{(\theta)}(x,y),t))}\\
		&=
		%		\frac
		%		{
		\mathbf{1}_{\mathcal{H}(t)}(x) \cdot g_{x,y,t}(\theta) \cdot \mathrm{Q}_{S(x,y,t)}f_{x,y,t}(\theta)
		%		}
		\cdot \lambda(S(x,y,t))^{-1}.
		\end{align*}
		Hence
		\begin{align*}
		\int_{0}^{2\pi} h^{(2)}_{t}(\theta,T^{(\theta)}(x, y))\ \dint \theta\,
		&= 	\mathbf{1}_{\mathcal{H}(t)}(x) \int_{S(x,y,t)} g_{x,y,t}(\theta) \ \mathrm{Q}_{S(x,y,t)}f_{x,y,t}(\theta) \ \mathcal{U}_{S(x,y,t)}(\dint \theta)\\
		&= 	\mathbf{1}_{\mathcal{H}(t)}(x) \langle \mathrm{Q}_{S(x,y,t)}f_{x,y,t} ,g_{x,y,t} \rangle_{S(x,y,t)}.
		\end{align*}
		Altogether we obtain
		\begin{align*}
		\langle \mathrm{H}f, g\rangle_{\mu} 
		& = \frac{1}{Z}	\int_0^\infty \int_{\mathcal{H}} \int_{\mathcal{H}(t)} \langle \mathrm{Q}_{S(x,y,t)}f_{x,y,t} ,g_{x,y,t} \rangle_{S(x,y,t)}\  \mu_0(\dint x)\, \mu_0(\dint y)\, \dint t. 
		\qedhere
		\end{align*}
\end{proof}
By combining the previous results we prove the main theorem.
	\begin{proof}[Proof of Theorem \ref{thm: ellip_reversible}]
		Let $A, B \in \mathcal{B}(\mathcal{H})$.
		Since $\rho$ is lower-semicontinuous, $S(x,y,t)$ is open on the circle and non-empty for all $(x,y,t) \in \{(x,y,t) \in \mathcal{H}^2 \times (0,\infty) \colon x \in \mathcal{H}(t)\}$.
		By Theorem~\ref{thm: shrink_reversible} this yields
		\begin{align*}
		&\langle \mathrm{Q}_{S(x,y,t)}\mathbf{1}_{p_{x,y}^{-1}(A \cap \mathcal{H}(t))},\mathbf{1}_{p_{x,y}^{-1}(B \cap \mathcal{H}(t))}\rangle_{S(x,y,t)}\\
		=& \int_{p_{x,y}^{-1}(B \cap \mathcal{H}(t))} Q_{S(x,y,t)}(\theta, p_{x,y}^{-1}(A \cap \mathcal{H}(t)))\, \mathcal{U}_{S(x,y,t)}(\dint \theta)\\
		=&\int_{p_{x,y}^{-1}(A \cap \mathcal{H}(t))} Q_{S(x,y,t)}(\theta, p_{x,y}^{-1}(B \cap \mathcal{H}(t)))\, \mathcal{U}_{S(x,y,t)}(\dint \theta)\\
		=&\langle \mathrm{Q}_{S(x,y,t)} \mathbf{1}_{p_{x,y}^{-1}(B \cap \mathcal{H}(t))},\mathbf{1}_{p_{x,y}^{-1}(A \cap \mathcal{H}(t))}\rangle_{S(x,y,t)}.
		\end{align*}
		By $(\mathbf{1}_{A})_{x,y,t} = \mathbf{1}_{p_{x,y}^{-1}(A \cap \mathcal{H}(t))}$, we get with	 \eqref{eq: inner_prod_ellip} that
		\begin{align*}
		&\int_{B} H(x, A) \mu(\dint x) 
		= \langle \mathrm{H}\mathbf{1}_{A},\mathbf{1}_{B}\rangle_\mu \\
		&\qquad= \frac{1}{Z} \int_{0}^{\infty} \int_{\mathcal{H}} \int_{\mathcal{H}(t)} \langle \mathrm{Q}_{S(x,y,t)} \mathbf{1}_{p_{x,y}^{-1}(A \cap \mathcal{H}(t))},\mathbf{1}_{p_{x,y}^{-1}(B \cap \mathcal{H}(t))}\rangle_{S(x,y,t)}\ \mu_0(\dint x)\, \mu_0(\dint y) \, \dint t\\
		&\qquad= \frac{1}{Z} \int_{0}^{\infty} \int_{\mathcal{H}} \int_{\mathcal{H}(t)} \langle 
		\mathrm{Q}_{S(x,y,t)}\mathbf{1}_{p_{x,y}^{-1}(B \cap \mathcal{H}(t))},\mathbf{1}_{p_{x,y}^{-1}(A \cap \mathcal{H}(t))}\rangle_{S(x,y,t)}\ \mu_0(\dint x)\, \mu_0(\dint y) \, \dint t\\
		&\qquad = \langle \mathrm{H}\mathbf{1}_{B},\mathbf{1}_{A}\rangle_\mu = \int_{A} H(x, B) \mu(\dint x),
		\end{align*}
		which verifies the desired reversibility w.r.t. $\mu$. We now turn to positive semi-definiteness.
		Combining Lemma \ref{lem: inner_prod_ellip} and Lemma \ref{lem: positive semidefiniteness} yields for all $f \in L_\mu^2$ that
		\begin{align*}
		\langle \mathrm{H}f,f\rangle_\mu 
		& = \frac{1}{Z}\int_{0}^{\infty} \int_{\mathcal{H}} \int_{\mathcal{H}(t)} \langle \mathrm{Q}_{S(x,y,t)}f_{x,y,t},f_{x,y,t}\rangle_{S(x,y,t)}\  \mu_0(\dint x) \mu_0(\dint y) \dint t
		\geq 0.
		\qedhere
		\end{align*}
	\end{proof}
\section{Summary and outlook}

Let us summarize our main findings. We provide a proof of reversibility and positive semi-definiteness of ESS, where the underlying state space of the corresponding Markov chain can be an infinite-dimensional Hilbert space. On the way to that we point to a (weak) qualitative regularity condition of the likelihood function ($\rho$ is assumed to be lower semicontinuous) that guarantees that the appearing while loop terminates and therefore leads to a well-defined transition kernel. Moreover, with \eqref{eq: trans_ESS} we developed a representation of the transition kernel of ESS. Our approach illuminates the hybrid slice sampling structure, cf. \cite{latuszynski2014convergence}, in terms of the positive semi-definiteness, see Lemma~\ref{lem: positive semidefiniteness}, and the reversibility of the shrinkage procedure, see Theorem~\ref{thm: shrink_reversible}, w.r.t. the uniform distribution on a subset of the angle space $[0,2\pi)$.

The formerly developed representations and tools might path the way for an analysis of the spectral gap of ESS regarding dimension independent behavior. A strictly positive spectral gap is a desirable property of a Markov chain w.r.t.~mixing properties as well as the theoretical assessment of the mean squared error of Markov chain Monte Carlo for the approximations of expectations according to $\mu$, for details see for example \cite{rudolf2012explicit}. Coupling constructions as have been derived for simple slice sampling in \cite{natarovskii2021quantitative} might be a promising approach for addressing the verification of the existence of such a positive spectral gap on $\mathcal{H}$. Moreover, it also seems advantageous to further explore the structural similarity between Metropolis-Hastings and slice sampling approaches. In particular, approximate (elliptical) slice sampling that relies on evaluations of proxys of the likelihood function are interesting. Here stability investigations as e.g. delivered in \cite{habeck2020stability,sprungk2020local} might be used to obtain perturbation theoretical results for ESS as presented in \cite{rudolf2018perturbation,medina2020perturbation} for approximate Metropolis Hastings. 
Eventually, the theoretical investigation of ESS might be useful to verify the reversibility property and the positive semi-definiteness property also for other slice sampling schemes that rely on the shrinkage procedure.

%%%%%%%%%%%%%%%%%%%%%%%%%%%%%%%%%%%%%%%%%%%%%%
%% Single Appendix:                         %%
%%%%%%%%%%%%%%%%%%%%%%%%%%%%%%%%%%%%%%%%%%%%%%
\begin{appendix}
\section{Technical proofs}%% if no title is needed, leave empty \section*{}.

\subsection{Proof of Lemma~\ref{lem: theta_conditioned_on_interval}}
\label{app: 1}

\begin{proof}
	By induction over $n\in\mathbb{N}$ we prove 
	\begin{equation}
	\label{eq: to_show}
	\mathbb{E}[\mathbf{1}_A(\Theta)\mid Z_1,\dots,Z_n] = 
	\frac{\mathbb{P}_{\Theta}(A\cap I(\Gamma_n^{\min},\Gamma_n^{\max}))}{\mathbb{P}_\Theta(I(\Gamma_n^{\min},\Gamma_n^{\max}))},
	\end{equation}
	from which the statement follows readily.
	We start with the base case, i.e., consider $n=1$. Note that $Z_1=(\Gamma_1,\Gamma_1^{\min},\Gamma_1^{\max})$ is independent of $\Theta$ and that $I(\Gamma_1^{\min},\Gamma_1^{\max}) = I(\Gamma_1,\Gamma_1) = [0,2\pi)$. Using those properties yields
	\begin{align*}
	\mathbb{E}(\mathbf{1}_A(\Theta)\mid Z_1) & = \mathbb{P}_\Theta (A) 
	= \frac{\mathbb{P}_\Theta(A\cap I(\Gamma_1,\Gamma_1))}{\mathbb{P}_\Theta(I(\Gamma_1,\Gamma_1))} 
	= \frac{\mathbb{P}_\Theta(A\cap I(\Gamma^{\min}_1,\Gamma^{\max}_1))}{\mathbb{P}_\Theta(I(\Gamma^{\min}_1,\Gamma^{\max}_1))},
	\end{align*}
	which verifies \eqref{eq: to_show} for $n=1$.
	
	Assume that \eqref{eq: to_show} is true for $n$, we are going to prove it for $n+1$. 
	Observe that, as $Z_n \in \Lambda_\Theta$ almost surely, we have $\Theta \in I(\Gamma_n^{\min},\Gamma_n^{\max})$, $\Gamma_n \in I(\Gamma_n^{\min},\Gamma_n^{\max})$ and the following two implications
	\begin{align}
	\label{al: impl_theta1}
	\Theta \in I(\Gamma_n,\Gamma_n^{\max}) & \; \Longrightarrow \; 
	 \Gamma_n\in J(\Gamma_n^{\min},\Theta) 
	\; \Longrightarrow \;
	\Gamma_{n+1}^{\min} = \Gamma_n, \; \Gamma_{n+1}^{\max} = \Gamma_n^{\max},\\
	\label{al: impl_theta2}
	\Theta \in J(\Gamma_n^{\min},\Gamma_n) & \; \Longrightarrow \;
	\Gamma_n\in I(\Theta,\Gamma_n^{\max}) 
	\; \Longrightarrow \;
	\Gamma_{n+1}^{\min} = \Gamma_n^{\min}, \; \Gamma_{n+1}^{\max} = \Gamma_n.
	\end{align}
	Moreover, by the induction assumption, the fact that $\Gamma_n \in I(\Gamma_n^{\min},\Gamma_n^{\max})$ almost surely and disintegration, {see \cite[Chapter~6, Theorem~6.4 and (6)]{Ka02}}, we have 
	\begin{align}
	\label{al: gamma_gamma_max}
	\mathbb{E}[\mathbf{1}_{I(\Gamma_n,\Gamma_n^{\max})}(\Theta)\mid Z_1,\dots,Z_n]  
	& = \frac{\mathbb{P}_\Theta(I(\Gamma_n,\Gamma_n^{\max}))}{\mathbb{P}_\Theta(I(\Gamma_n^{\min},\Gamma_n^{\max}))},\\
	\label{al: gamma_min_gamma}
	\mathbb{E}[\mathbf{1}_{J(\Gamma^{\min}_n,\Gamma_n)}(\Theta)\mid Z_1,\dots,Z_n]  
	& = \frac{\mathbb{P}_\Theta(J(\Gamma^{\min}_n,\Gamma_n))}{\mathbb{P}_\Theta(I(\Gamma_n^{\min},\Gamma_n^{\max}))} =  \mathbf{1}_{\{\Gamma^{\min}_n\neq\Gamma_n\}}\frac{\mathbb{P}_\Theta(I(\Gamma^{\min}_n,\Gamma_n))}{\mathbb{P}_\Theta(I(\Gamma_n^{\min},\Gamma_n^{\max}))}.
	\end{align}
	For arbitrary $A\in \mathcal{B}([0,2\pi))$, $C_i \in \mathcal{B}([0,2\pi)^3)$ with $i=1,\dots,n+1$  we verify
	\begin{align}
	& \mathbb{E}\Big[\mathbf{1}_A(\Theta) \prod_{i=1}^{n+1} \mathbf{1}_{C_i}(Z_i)\Big]
	= \mathbb{E}\Big[ 
	\prod_{i=1}^{n+1} \mathbf{1}_{C_i}(Z_i) \frac{\mathbb{P}_\Theta(A\cap I(\Gamma^{\min}_{n+1},\Gamma_{n+1}^{\max}))}{\mathbb{P}_\Theta(I(\Gamma^{\min}_{n+1},\Gamma_{n+1}^{\max}))} \Big].
	\label{al: to_show}
	\end{align}
	Hence by the definition of the conditional distribution/expectation and the fact that Cartesian product sets of the above form generate the $\sigma$-algebra $\mathcal{B}([0,2\pi)^{3(n+1)})$ we obtain \eqref{eq: to_show}. For proving \eqref{al: to_show} we observe that
	\begin{align*}
	& \mathbb{E}\Big[\mathbf{1}_A(\Theta)\prod_{i=1}^{n+1} \mathbf{1}_{C_i}(Z_i)\Big]
	=  \mathbb{E}\Big [ \mathbb{E} \Big[ \mathbf{1}_{A}(\Theta) \prod_{i=1}^{n+1} \mathbf{1}_{C_i}(Z_i)  \mid Z_1,\dots,Z_n, \Theta \Big] \Big]\\
	=\; & \mathbb{E}\Big [  \mathbf{1}_{A}(\Theta) \prod_{i=1}^{n} \mathbf{1}_{C_i}(Z_i)\, \mathbb{P}\left( Z_{n+1}\in C_{n+1}   \mid Z_1,\dots,Z_n, \Theta \right) \Big]\\
	=\; & \mathbb{E}\Big [  \mathbf{1}_{A}(\Theta) \prod_{i=1}^{n} \mathbf{1}_{C_i}(Z_i)\,\mathbb{P}\left( Z_{n+1}\in C_{n+1}   \mid Z_n, \Theta \right) \Big]
	=\;  \mathbb{E}\Big [  \mathbf{1}_{A}(\Theta) \prod_{i=1}^{n} \mathbf{1}_{C_i}(Z_i)\, R_{\Theta}(Z_n,C_{n+1}) \Big]. 
	\end{align*}
	By Lemma~\ref{lem: repres_R_theta} we conclude from the previous calculation that
	\begin{align*}
	& \mathbb{E}\Big[\mathbf{1}_A(\Theta)\prod_{i=1}^{n+1} \mathbf{1}_{C_i}(Z_i)\Big]
	=\; \mathbb{E}\Big[\prod_{i=1}^{n}\mathbf{1}_{C_i}(Z_i)\, \mathbf{1}_{A\cap I(\Gamma_n,\Gamma_n^{\max})}(\Theta) 
	\,\mathcal{U}_{I(\Gamma_n,\Gamma_n^{\max})}\otimes\delta_{(\Gamma_n,\Gamma_n^{\max})}(C_{n+1}) \Big]\\
	& \qquad \qquad \qquad \qquad \qquad + \mathbb{E}\Big[\prod_{i=1}^{n}\mathbf{1}_{C_i}(Z_i)\, \mathbf{1}_{A\cap J(\Gamma_n^{\min},\Gamma_n)}(\Theta) \,
	\mathcal{U}_{I(\Gamma_n^{\min},\Gamma_n)}\otimes\delta_{(\Gamma_n^{\min},\Gamma_n)}(C_{n+1}) \Big]\\
	=\;& \mathbb{E}\Big[\prod_{i=1}^{n}\mathbf{1}_{C_i}(Z_i)\,
	\mathcal{U}_{I(\Gamma_n,\Gamma_n^{\max})}\otimes\delta_{(\Gamma_n,\Gamma_n^{\max})}(C_{n+1})\, \mathbb{E}[\mathbf{1}_{A\cap I(\Gamma_n,\Gamma_n^{\max})}(\Theta)  \mid Z_1,\dots,Z_n] \Big]\\
	& + \mathbb{E}\Big[\prod_{i=1}^{n}\mathbf{1}_{C_i}(Z_i)\,
	\mathcal{U}_{I(\Gamma_n^{\min},\Gamma_n)}\otimes\delta_{(\Gamma_n^{\min},\Gamma_n)}(C_{n+1}) \, \mathbb{E}[\mathbf{1}_{A\cap J(\Gamma_n^{\min},\Gamma_n)}(\Theta) \mid Z_1,\dots,Z_n]\Big].
	\end{align*}
	For abbreviating the notation define
	\begin{align*}
	H_{\max}(Z_1,\dots,Z_n) & := \prod_{i=1}^{n}\mathbf{1}_{C_i}(Z_i)\,
	\mathcal{U}_{I(\Gamma_n,\Gamma_n^{\max})}\otimes\delta_{(\Gamma_n,\Gamma_n^{\max})}(C_{n+1}),\\
	H_{\min}(Z_1,\dots,Z_n) & :=
	\prod_{i=1}^{n}\mathbf{1}_{C_i}(Z_i)\,
	\mathcal{U}_{I(\Gamma_n^{\min},\Gamma_n)}\otimes\delta_{(\Gamma_n^{\min},\Gamma_n)}(C_{n+1}).
	\end{align*} 
	Observe that $\mathbb{P}_\Theta(A\cap J(\Gamma^{\min}_n,\Gamma_n)) = \mathbf{1}_{\{\Gamma^{\min}_n\neq\Gamma_n\}}\,\mathbb{P}_\Theta(A\cap I(\Gamma^{\min}_n,\Gamma_n))$. Therefore
	using the induction assumption and the fact that $\Gamma_n\in I(\Gamma_n^{\min},\Gamma_n^{\max})$ almost surely implies $J(\Gamma_n^{\min},\Gamma_n) \subset I(\Gamma_n^{\min},\Gamma_n^{\max})$
	and $I(\Gamma_n,\Gamma_n^{\max}) \subset I(\Gamma_n^{\min},\Gamma_n^{\max})$ almost surely we have
	\begin{align*}
	& \mathbb{E}\Big[\mathbf{1}_A(\Theta)\prod_{i=1}^{n+1} \mathbf{1}_{C_i}(Z_i)\Big]\\
	%	=& \mathbb{E}\Big[H_{\max}(Z_1,\dots,Z_n)\, \frac{\mathbb{P}_\Theta(A\cap I(\Gamma_n,\Gamma_n^{\max}))}{\mathbb{P}_\Theta(I(\Gamma_n^{\min},\Gamma_n^{\max}))} \Big]\\
	%	& \quad + \mathbb{E}\Big[H_{\min}(Z_1,\dots,Z_n) \, \frac{\mathbb{P}_\Theta(A\cap I(\Gamma^{\min}_n,\Gamma_n))}{\mathbb{P}_\Theta(I(\Gamma_n^{\min},\Gamma_n^{\max}))}\Big]\\
	=\;& \mathbb{E}\Big[H_{\max}(Z_1,\dots,Z_n)\, \frac{\mathbb{P}_\Theta(A\cap I(\Gamma_n,\Gamma_n^{\max}))}{\mathbb{P}_\Theta(I(\Gamma_n^{\min},\Gamma_n^{\max}))} \Big]\\
	& \quad  + \mathbb{E}\Big[H_{\min}(Z_1,\dots,Z_n) {\mathbf{1}_{\{\Gamma_n^{\min}\neq\Gamma_n\}}}\, \frac{\mathbb{P}_\Theta(A\cap I(\Gamma^{\min}_n,\Gamma_n))}{\mathbb{P}_\Theta(I(\Gamma_n^{\min},\Gamma_n^{\max}))}\Big]\\
	=\;& \mathbb{E}\Big[H_{\max}(Z_1,\dots,Z_n)\,\frac{\mathbb{P}_\Theta(I(\Gamma_n,\Gamma_n^{\max}))}{\mathbb{P}_\Theta(I(\Gamma_n^{\min},\Gamma_n^{\max}))} \frac{\mathbb{P}_\Theta(A\cap I(\Gamma_n,\Gamma_n^{\max}))}{\mathbb{P}_\Theta(I(\Gamma_n,\Gamma_n^{\max}))} \Big]\\
	& \quad  + \mathbb{E}\Big[H_{\min}(Z_1,\dots,Z_n) {\mathbf{1}_{\{\Gamma_n^{\min}\neq\Gamma_n\}}}\,\frac{\mathbb{P}_\Theta(I(\Gamma_n^{\min},\Gamma_n))}{\mathbb{P}_\Theta(I(\Gamma_n^{\min},\Gamma_n^{\max}))} \frac{\mathbb{P}_\Theta(A\cap I(\Gamma^{\min}_n,\Gamma_n))}{\mathbb{P}_\Theta(I(\Gamma_n^{\min},\Gamma_n))} \Big].
	\end{align*}
	Using \eqref{al: gamma_gamma_max} as well as \eqref{al: gamma_min_gamma} we get
	\begin{align*}
	& \mathbb{E}\Big[\mathbf{1}_A(\Theta)\prod_{i=1}^{n+1} \mathbf{1}_{C_i}(Z_i)\Big]\\
	=\;& \mathbb{E}\Big[H_{\max}(Z_1,\dots,Z_n)\,\mathbb{E}[\mathbf{1}_{I(\Gamma_n,\Gamma_n^{\max})}(\Theta)\mid Z_1,\dots,Z_n]
	\cdot \frac{\mathbb{P}_\Theta(A\cap I(\Gamma_n,\Gamma_n^{\max}))}{\mathbb{P}_\Theta(I(\Gamma_n,\Gamma_n^{\max}))} \Big]\\
	& + \mathbb{E}\Big[H_{\min}(Z_1,\dots,Z_n) \,\mathbb{E}[\mathbf{1}_{J(\Gamma_n^{\min},\Gamma_n)}(\Theta)\mid Z_1,\dots,Z_n]
	\cdot
	\frac{\mathbb{P}_\Theta(A\cap I(\Gamma^{\min}_n,\Gamma_n))}{\mathbb{P}_\Theta(I(\Gamma_n^{\min},\Gamma_n))}\Big]
	\\
	=\;& \mathbb{E}\Big[\mathbb{E}\Big[H_{\max}(Z_1,\dots,Z_n)\,\mathbf{1}_{I(\Gamma_n,\Gamma_n^{\max})}(\Theta)\cdot \frac{\mathbb{P}_\Theta(A\cap I(\Gamma_n,\Gamma_n^{\max}))}{\mathbb{P}_\Theta(I(\Gamma_n,\Gamma_n^{\max}))}\mid Z_1,\dots,Z_n\Big] \Big]\\
	& + \mathbb{E}\Big[\mathbb{E}\Big[H_{\min}(Z_1,\dots,Z_n) \,\mathbf{1}_{J(\Gamma_n^{\min},\Gamma_n)}(\Theta)
	\cdot
	\frac{\mathbb{P}_\Theta(A\cap I(\Gamma^{\min}_n,\Gamma_n))}{\mathbb{P}_\Theta(I(\Gamma_n^{\min},\Gamma_n))}\mid Z_1,\dots,Z_n\Big]\Big].
	\end{align*}
	Denoting 
	\[
	\mathcal{T}_{I(\Gamma^{\min}_{n+1},\Gamma_{n+1}^{\max})}(A) := 
	\frac{\mathbb{P}_\Theta(A\cap I(\Gamma^{\min}_{n+1},\Gamma_{n+1}^{\max}))}{\mathbb{P}_\Theta(I(\Gamma^{\min}_{n+1},\Gamma_{n+1}^{\max}))}
	\]
	and exploiting \eqref{al: impl_theta1} as well as \eqref{al: impl_theta2} gives
	\begin{align*}
	& \mathbb{E}\Big[\mathbf{1}_A(\Theta)\prod_{i=1}^{n+1} \mathbf{1}_{C_i}(Z_i)\Big]\\
	=\;& \mathbb{E}\Big[ \prod_{i=1}^{n}\mathbf{1}_{C_i}(Z_i)\,
	\mathcal{U}_{I(\Gamma^{\min}_{n+1},\Gamma_{n+1}^{\max})}\otimes\delta_{(\Gamma^{\min}_{n+1},\Gamma_{n+1}^{\max})}(C_{n+1})\,\mathbf{1}_{I(\Gamma_n,\Gamma_n^{\max})}(\Theta) \mathcal{T}_{I(\Gamma^{\min}_{n+1},\Gamma_{n+1}^{\max})}(A)\Big]\\
	& + \mathbb{E}\Big[
	\prod_{i=1}^{n}\mathbf{1}_{C_i}(Z_i)\,
	\mathcal{U}_{I(\Gamma_{n+1}^{\min},\Gamma^{\max}_{n+1})}\otimes\delta_{(\Gamma_{n+1}^{\min},\Gamma^{\max}_{n+1})}(C_{n+1}) \,\mathbf{1}_{J(\Gamma_n^{\min},\Gamma_n)}(\Theta)
	\mathcal{T}_{I(\Gamma^{\min}_{n+1},\Gamma_{n+1}^{\max})}(A)\Big].
	\end{align*}
	By the fact that $\Theta,\Gamma_n \in I(\Gamma_n^{\min},\Gamma_n^{\max})$ almost surely, we have
	$\mathbf{1}_{I(\Gamma_n,\Gamma_n^{\max})}(\Theta) +\mathbf{1}_{J(\Gamma_n^{\min},\Gamma_n)}(\Theta)=1$ almost surely, such that
	\begin{align*}
	& \mathbb{E}\Big[\mathbf{1}_A(\Theta)\prod_{i=1}^{n+1} \mathbf{1}_{C_i}(Z_i)\Big]
	=\; \mathbb{E}\Big[ \prod_{i=1}^{n}\mathbf{1}_{C_i}(Z_i)\,
	\mathcal{U}_{I(\Gamma^{\min}_{n+1},\Gamma_{n+1}^{\max})}\otimes\delta_{(\Gamma^{\min}_{n+1},\Gamma_{n+1}^{\max})}(C_{n+1})\,\mathcal{T}_{I(\Gamma^{\min}_{n+1},\Gamma_{n+1}^{\max})}(A)\Big].
	\end{align*}
	By virtue of \eqref{al: gamma_given_interval} we have 
	\begin{align*}
	\mathbb{E}[\mathbf{1}_{C_{n+1}}(Z_{n+1})\mid \Gamma_{n+1}^{\min},\Gamma_{n+1}^{\max}] & =   \mathcal{U}_{I(\Gamma^{\min}_{n+1},\Gamma_{n+1}^{\max})}\otimes\delta_{(\Gamma^{\min}_{n+1},\Gamma_{n+1}^{\max})}(C_{n+1})\\
	&=  \mathbb{E}[\mathbf{1}_{C_{n+1}}(Z_{n+1})\mid Z_1,\dots,Z_n, \Gamma_{n+1}^{\min},\Gamma_{n+1}^{\max}] ,
	\end{align*}
	such that
	\begin{align*}
	 \mathbb{E}\Big[\mathbf{1}_A(\Theta)\prod_{i=1}^{n+1} \mathbf{1}_{C_i}(Z_i)\Big]
	=\;& \mathbb{E}\Big[ \prod_{i=1}^{n}\mathbf{1}_{C_i}(Z_i)\,\mathbb{E}[\mathbf{1}_{C_{n+1}}(Z_{n+1})\mid Z_1,\dots,Z_n, \Gamma_{n+1}^{\min},\Gamma_{n+1}^{\max}]\,\mathcal{T}_{I(\Gamma^{\min}_{n+1},\Gamma_{n+1}^{\max})}(A)\Big]\\
	=\;& \mathbb{E}\Big[\mathbb{E}\Big[ \prod_{i=1}^{n+1}\mathbf{1}_{C_i}(Z_i)\,\mathcal{T}_{I(\Gamma^{\min}_{n+1},\Gamma_{n+1}^{\max})}(A)
	\mid Z_1,\dots,Z_n, \Gamma_{n+1}^{\min},\Gamma_{n+1}^{\max}\Big]\Big]\\
	=\;& \mathbb{E}\Big[ \prod_{i=1}^{n+1}\mathbf{1}_{C_i}(Z_i)\,\frac{\mathbb{P}_\Theta(A\cap I(\Gamma^{\min}_{n+1},\Gamma_{n+1}^{\max}))}{\mathbb{P}_\Theta(I(\Gamma^{\min}_{n+1},\Gamma_{n+1}^{\max}))}\Big].
	\end{align*}
	Finally observing that $\frac{\mathbb{P}_\Theta(A\cap I(\Gamma^{\min}_{n+1},\Gamma_{n+1}^{\max}))}{\mathbb{P}_\Theta(I(\Gamma^{\min}_{n+1},\Gamma_{n+1}^{\max}))}$ is measurable w.r.t. $\sigma(Z_1,\dots,Z_{n+1})$ we have proven \eqref{eq: to_show} and the total statement is verified. 
\end{proof}

\subsection{Proof of Lemma~\ref{lem: pushforward_invariance}}
\label{sec: proof_pushforward_invariance}
	We start with stating notation and proving auxiliary results that are used in the actual proof of Lemma~\ref{lem: pushforward_invariance}.
	For $\alpha,\beta\in [0,2\pi)$ let
	\[ 
	{\inI}(\alpha, \beta):= \begin{cases}	(\alpha, \beta), & \alpha < \beta\\
	[0, \beta) \cup (\alpha, 2 \pi), & \alpha > \beta\\
	[0, 2\pi), & \alpha = \beta.
	\end{cases} 
	\]
	We provide two lemmas, where the first follows by 
	performing a case distinction.
	\begin{lem}  \label{lem: interval_repres_after_gtheta}
		Let $\theta\in [0,2\pi)$ and define $g_\theta \colon [0,2\pi) \to [0,2\pi)$ by $g_\theta (\alpha) = (\theta - \alpha) \mod 2\pi$.
		Then, for any $\alpha,\beta\in [0,2\pi)$ we have 
		\[
		g_\theta(I^
		\circ(\alpha,\beta)) = I^
		\circ(g_\theta(\beta),g_\theta(\alpha)).
		\]
	\end{lem}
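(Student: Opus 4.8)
The plan is to pass to the universal cover. Write $\psi\colon\mathbb{R}\to[0,2\pi)$ for the canonical projection $\psi(x)=x\bmod 2\pi$, so that $g_\theta=\psi(\theta-\cdot)$ and, since $\psi$ is $2\pi$-periodic and $\psi\circ\psi=\psi$, one has $g_\theta(\psi(x))=\psi(\theta-x)$ for every $x\in\mathbb{R}$; in particular $g_\theta$ is a bijection of $[0,2\pi)$ (indeed an involution, as $g_\theta(g_\theta(\alpha))=\alpha$). The case $\alpha=\beta$ is then immediate: both sides of the claimed identity equal $[0,2\pi)=g_\theta([0,2\pi))$. So from here on I would assume $\alpha\neq\beta$.

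The one piece of genuine casework I would isolate first is the elementary identity
\[
I^{\circ}(c,d)=\psi\bigl((\tilde c,\ \tilde c+(d-c)\bmod 2\pi)\bigr)\qquad\text{for all }c\neq d\text{ in }[0,2\pi)\text{ and every }\tilde c\in\psi^{-1}(c),
\]
whose proof is a short check distinguishing $c<d$ and $c>d$: for $c<d$ take $\tilde c=c$, and the real interval already lies in $[0,2\pi)$; for $c>d$ take $\tilde c=c$, so the interval is $(c,d+2\pi)$, and splitting it at $2\pi$ reproduces $[0,d)\cup(c,2\pi)=I^{\circ}(c,d)$. Independence of the representative $\tilde c$ holds because translating the real interval by a multiple of $2\pi$ does not change its $\psi$-image. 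In particular, setting $\ell:=(\beta-\alpha)\bmod 2\pi\in(0,2\pi)$, we obtain $I^{\circ}(\alpha,\beta)=\psi\bigl((\alpha,\alpha+\ell)\bigr)$.

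With this in hand the computation is two lines. Using $g_\theta\circ\psi=\psi(\theta-\cdot)$,
\[
g_\theta\bigl(I^{\circ}(\alpha,\beta)\bigr)=g_\theta\bigl(\psi((\alpha,\alpha+\ell))\bigr)=\psi\bigl(\{\theta-x:x\in(\alpha,\alpha+\ell)\}\bigr)=\psi\bigl((\theta-\alpha-\ell,\ \theta-\alpha)\bigr).
\]
For the right-hand side I would apply the displayed identity with $c=g_\theta(\beta)$ and $d=g_\theta(\alpha)$: here $(d-c)\bmod 2\pi=(g_\theta(\alpha)-g_\theta(\beta))\bmod 2\pi=(\beta-\alpha)\bmod 2\pi=\ell$, and $\theta-\alpha-\ell$ is a real representative of $g_\theta(\beta)$ because $\theta-\beta\equiv\theta-\alpha-\ell\pmod{2\pi}$. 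Hence
\[
I^{\circ}(g_\theta(\beta),g_\theta(\alpha))=\psi\bigl((\theta-\alpha-\ell,\ \theta-\alpha-\ell+\ell)\bigr)=\psi\bigl((\theta-\alpha-\ell,\ \theta-\alpha)\bigr),
\]
which is exactly the expression obtained for the left-hand side, so the two sets coincide.

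I expect the only mildly delicate point to be the bookkeeping in the second paragraph: verifying the $\psi$-image formula for $I^{\circ}$ in the wrap-around case $c>d$ and observing that it is insensitive to the choice of real representative of the left endpoint. Everything afterwards is a direct substitution. One could instead avoid $\psi$ entirely and prove the statement by an exhaustive case distinction on the cyclic order of $\alpha$, $\beta$, $\theta$, as the surrounding text suggests; that is conceptually trivial but notationally heavier, so the covering-space argument above seems cleaner to write down.
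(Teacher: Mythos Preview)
Your argument is correct. The paper itself gives no details beyond the phrase ``follows by performing a case distinction,'' i.e.\ a direct check splitting on the relative order of $\alpha$, $\beta$ and the effect of $g_\theta$ on the various interval configurations. Your approach is genuinely different: you lift to the real line via the projection $\psi$, encode $I^\circ(c,d)$ once and for all as the $\psi$-image of a real interval of length $(d-c)\bmod 2\pi$, and then the claim becomes a one-line substitution using $g_\theta\circ\psi=\psi(\theta-\cdot)$. The residual casework is confined to verifying that single $\psi$-representation of $I^\circ$, which only needs the two cases $c<d$ and $c>d$. By contrast, the paper's direct route would in principle require tracking how $g_\theta$ rearranges endpoints across several subcases (including whether $\theta$ lies between $\alpha$ and $\beta$ and whether the resulting images wrap around). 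Your covering-space formulation buys a shorter and more conceptual write-up at the cost of introducing the auxiliary map $\psi$; the paper's approach is more elementary but, as you note, notationally heavier.
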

	\begin{lem} \label{lem: a_s_R_alpha}
		Let $\alpha,\theta \in (0,2\pi]$ and
		\[
		G_{\alpha} := \{(\gamma,\gamma^{\min},\gamma^{\max})\in [0,2\pi)^3 \colon \alpha, \gamma \in I^
		\circ(\gamma^{\min}, \gamma^{\max}), \alpha\neq\gamma,\alpha \neq \gamma^{\min}, \alpha \neq \gamma^{\max}\}.
		\]
		Define
		$\widetilde{g}_\theta \colon [0,2\pi)^3 \to [0,2\pi)^3$ by $\widetilde{g}_\theta(\gamma_1,\gamma_2,\gamma_3) = (g_\theta(\gamma_1),g_\theta(\gamma_3),g_\theta(\gamma_2))$,
		where $g_\theta$ is the mapping from Lemma~\ref{lem: interval_repres_after_gtheta}.
		Then, for $R_\alpha$ (introduced before Lemma~\ref{lem: repres_R_theta}) we have for any $z\in G_\alpha$ that $R_\alpha(z,G_\alpha)=1$ and 
		\begin{equation}
		\label{eq: pushforward_coincides}
		R_{g_\theta^{-1}(\alpha)}(\widetilde{g}_\theta(z),\widetilde{g}_\theta(C)) = R_\alpha(z,C),
		\end{equation}
		for any $C\in \mathcal{B}([0,2\pi)^3)$.
	\end{lem}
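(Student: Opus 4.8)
The plan is to derive both assertions from the explicit one-step representation of $R_\alpha$ in Lemma~\ref{lem: repres_R_theta}, using that $g_\theta$ is an orientation-reversing involution of the circle. First I would record the elementary properties of $g_\theta$ that do the work: it is an involution on $[0,2\pi)$, so $g_\theta^{-1}=g_\theta$ and coordinatewise $\widetilde g_\theta^{-1}=\widetilde g_\theta$; it is piecewise affine of slope $-1$, hence preserves Lebesgue measure on $[0,2\pi)$; and, by Lemma~\ref{lem: interval_repres_after_gtheta}, $g_\theta(I^\circ(a,b))=I^\circ(g_\theta(b),g_\theta(a))$. Combining the last two gives the pushforward identity $(g_\theta)_*\,\mathcal{U}_{I(a,b)}=(g_\theta)_*\,\mathcal{U}_{I^\circ(a,b)}=\mathcal{U}_{I^\circ(g_\theta(b),g_\theta(a))}=\mathcal{U}_{I(g_\theta(b),g_\theta(a))}$, the two outer equalities holding because interval endpoints are Lebesgue-null. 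I would also note that $G_\alpha\subseteq\Lambda_\alpha$, so $R_\alpha$ is genuinely defined on $G_\alpha$; that $\widetilde g_\theta(G_\alpha)=G_{g_\theta(\alpha)}$, a direct check from injectivity of $g_\theta$ and Lemma~\ref{lem: interval_repres_after_gtheta}; and that on $G_\alpha$ one may pass freely between the open and half-open versions of the circle intervals inside all indicators evaluated at $\alpha$, since these versions differ only at endpoints and the conditions $\alpha\neq\gamma,\alpha\neq\gamma^{\min},\alpha\neq\gamma^{\max}$ exclude $\alpha$ from them (with a routine separate treatment of the degenerate configuration $\gamma=\gamma^{\min}=\gamma^{\max}$, where the relevant $J$-arc is empty).

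For $R_\alpha(z,G_\alpha)=1$ with $z=(\gamma,\gamma^{\min},\gamma^{\max})\in G_\alpha$, I would follow the closing argument in the proof of Lemma~\ref{lem: repres_R_theta}: since $\gamma\in I(\gamma^{\min},\gamma^{\max})$ one has the disjoint splitting $I(\gamma^{\min},\gamma^{\max})=J(\gamma^{\min},\gamma)\cup I(\gamma,\gamma^{\max})$ with $\alpha$ in exactly one part, and in each of the two branches of the Lemma~\ref{lem: repres_R_theta} formula the updated triple keeps its two extremal slots equal to two of $\gamma,\gamma^{\min},\gamma^{\max}$ (hence different from $\alpha$), has $\alpha$ in the corresponding open arc, and has its new middle slot drawn from a diffuse distribution on that arc; therefore the updated triple lies in $G_\alpha$ almost surely, which gives $R_\alpha(z,G_\alpha)=1$.

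For \eqref{eq: pushforward_coincides}, fix $z=(\gamma,\gamma^{\min},\gamma^{\max})\in G_\alpha$, put $\beta:=g_\theta(\alpha)$ and $(\eta,\eta^{\min},\eta^{\max}):=\widetilde g_\theta(z)=(g_\theta(\gamma),g_\theta(\gamma^{\max}),g_\theta(\gamma^{\min}))$, and expand both $R_\alpha(z,\cdot)$ and $R_\beta((\eta,\eta^{\min},\eta^{\max}),\cdot)$ by Lemma~\ref{lem: repres_R_theta}. Orientation reversal makes the two two-term sums match crosswise: by Lemma~\ref{lem: interval_repres_after_gtheta} and injectivity of $g_\theta$, $\beta\in I^\circ(\eta,\eta^{\max})=g_\theta(I^\circ(\gamma^{\min},\gamma))$ is equivalent to $\alpha\in I^\circ(\gamma^{\min},\gamma)$, so on $G_\alpha$ the indicator of one branch of $R_\beta$ equals $\mathbf{1}_{J(\gamma^{\min},\gamma)}(\alpha)$, the indicator of the other branch of $R_\alpha$, and symmetrically for the remaining pair. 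For the measure parts I would push the product measure of a branch of $R_\beta$, say $\mathcal{U}_{I(\eta,\eta^{\max})}\otimes\delta_{(\eta,\eta^{\max})}=\mathcal{U}_{I(g_\theta(\gamma),g_\theta(\gamma^{\min}))}\otimes\delta_{(g_\theta(\gamma),g_\theta(\gamma^{\min}))}$, through $\widetilde g_\theta$: the first marginal becomes $\mathcal{U}_{I^\circ(\gamma^{\min},\gamma)}=\mathcal{U}_{I(\gamma^{\min},\gamma)}$ by the pushforward identity above, and since $\widetilde g_\theta$ transposes the two extremal slots and applies $g_\theta$ coordinatewise, the Dirac part becomes $\delta_{(\gamma^{\min},\gamma)}$, so the whole measure becomes $\mathcal{U}_{I(\gamma^{\min},\gamma)}\otimes\delta_{(\gamma^{\min},\gamma)}$, exactly the measure appearing in the matching branch of $R_\alpha$. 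Running the same computation on the other branch and using $\nu(\widetilde g_\theta(C))=(\widetilde g_\theta)_*\nu(C)$ (valid since $\widetilde g_\theta$ is an involution) yields $R_{g_\theta^{-1}(\alpha)}(\widetilde g_\theta(z),\widetilde g_\theta(C))=R_\alpha(z,C)$ for all $C\in\mathcal{B}([0,2\pi)^3)$, as claimed.

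The main obstacle is purely one of bookkeeping: one must keep straight that $g_\theta$ reverses orientation — so $I(a,b)$ turns into an interval whose endpoints are $g_\theta(b),g_\theta(a)$ in the \emph{reversed} order — while $\widetilde g_\theta$ additionally transposes the minimum and maximum coordinates, and one must invoke the defining conditions of $G_\alpha$ precisely where the open, half-open and degenerate versions of the circle intervals need to be identified. There is no conceptual difficulty; the danger lies in an order or endpoint slip in the two-branch case analysis.
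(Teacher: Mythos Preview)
Your proposal is correct and follows essentially the same approach as the paper: both arguments rest on Lemma~\ref{lem: interval_repres_after_gtheta} together with the observation that on $G_\alpha$ the indicators evaluated at $\alpha$ are insensitive to the difference between $I$, $J$ and $I^\circ$ (with the degenerate case $\gamma=\gamma^{\min}=\gamma^{\max}$ handled separately), and both then match the two branches of $R_\beta(\widetilde g_\theta(z),\cdot)$ crosswise to the branches of $R_\alpha(z,\cdot)$. The only cosmetic differences are that the paper first records an explicit $I^\circ$-representation of $R_\alpha$ on $G_\alpha$ before computing, and proves the pushforward identity before $R_\alpha(z,G_\alpha)=1$, whereas you proceed in the opposite order and do the interval bookkeeping inline.
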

	\begin{proof}
		Observe that for all $\alpha, \beta \in [0,2\pi)$ we have $\mathcal{U}_{I(\alpha, \beta)} = \mathcal{U}_{I^
			\circ(\alpha, \beta)} $, and for all $z= (\gamma, \gamma^{\min}, \gamma^{\max}) \in G_\alpha$ holds
		\begin{equation*}
		\mathbf{1}_{I(\gamma, \gamma^{\max})}(\alpha) = \mathbf{1}_{I^\circ(\gamma, \gamma^{\max})}(\alpha), \qquad 
		\mathbf{1}_{J(\gamma^{\min},\gamma)}(\alpha) = \mathbf{1}_{[0,2\pi)\setminus\{\gamma^{\min}\}}(\gamma)\,\mathbf{1}_{I^\circ(\gamma^{\min},\gamma)}(\alpha).
		\end{equation*}
		Therefore we get by Lemma \ref{lem: repres_R_theta} for all $z= (\gamma, \gamma^{\min}, \gamma^{\max}) \in G_\alpha$ and any $C\in \mathcal{B}([0,2\pi)^3)$ that
		\begin{align}
		\label{eq: repres_R_alpha_on_G_alpha}
		R_\alpha((\gamma,\gamma^{\min},\gamma^{\max}), C) 
		&= \mathbf{1}_{I^\circ(\gamma,\gamma^{\max})}(\alpha) \cdot 
		\mathcal{U}_{I^\circ(\gamma,\gamma^{\max})} \otimes 
		\delta_{(\gamma,\gamma^{\max})}(C) \\
		&\quad+ \mathbf{1}_{[0,2\pi)\setminus\{\gamma^{\min}\}}(\gamma)\,
		\mathbf{1}_{I^\circ(\gamma^{\min},\gamma)}(\alpha) \cdot 
		\mathcal{U}_{I^\circ(\gamma^{\min},\gamma)} \otimes 
		\delta_{(\gamma^{\min},\gamma)}(C). \notag
		\end{align}
		Now fix  $z=(\gamma,\gamma^{\min},\gamma^{\max})\in G_\alpha$ and $C\in \mathcal{B}([0,2\pi)^3)$.
		Since $z\in G_\alpha$ implies $\widetilde{g}_{\theta}(z)\in G_{{g^{-1}_\theta}(\alpha)}$ and $g_\theta = g_\theta^{-1}$, $\widetilde{g}_\theta = \widetilde{g}_\theta^{-1}$, we obtain by \eqref{eq: repres_R_alpha_on_G_alpha}
		\begin{align*}
		&R_{g_\theta^{-1}(\alpha)}(\widetilde{g}_\theta(z),\widetilde{g}_\theta(C))
		%
		%	More detailed calculation				 
		%				 
		%				& = \mathbf{1}_{I(g_\theta(\gamma),g_\theta(\gamma^{\min}))}(g^{-1}_\theta(\alpha)) \cdot \mathcal{U}_{I(g_\theta(\gamma),g_\theta(\gamma^{\min}))} \otimes 
		%				\delta_{(g_\theta(\gamma),g_\theta(\gamma^{\min}))}(\widetilde{g}_\theta(C))\\
		%				& + \mathbf{1}_{\bar{I}(g_\theta(\gamma^{\max}),g_\theta(\gamma))}(g^{-1}_\theta(\alpha)) \cdot \mathcal{U}_{I(g_\theta(\gamma^{\max}),g_\theta(\gamma))} \otimes 
		%				\delta_{(g_\theta(\gamma^{\max}),g_\theta(\gamma))}(\widetilde{g}_\theta(C))\\
		= \mathbf{1}_{g_\theta(I^\circ(g_\theta(\gamma),g_\theta(\gamma^{\min})))}(\alpha) 
		\cdot \mathcal{U}_{g_\theta(I^\circ(g_\theta(\gamma),g_\theta(\gamma^{\min})))} \otimes 
		\delta_{(\gamma^{\min},\gamma)}(C) \\
		& \quad +\mathbf{1}_{[0,2\pi)\setminus(g_\theta(\gamma^{\max}))}(g_\theta(\gamma)) \mathbf{1}_{g_\theta(I^
			\circ(g_\theta(\gamma^{\max}),g_\theta(\gamma)))}(\alpha) \cdot \mathcal{U}_{g_\theta(I^
			\circ(g_\theta(\gamma^{\max}),g_\theta(\gamma)))} \otimes 
		\delta_{(\gamma,\gamma^{\max})}(C).
		\end{align*}
		Lemma~\ref{lem: interval_repres_after_gtheta} and again $g_\theta=g_\theta^{-1}$ yield
		\begin{align*}
		R_{g_\theta^{-1}(\alpha)}(\widetilde{g}_\theta(z),\widetilde{g}_\theta(C))
		&= \mathbf{1}_{I^
			\circ(\gamma^{\min},\gamma)}(\alpha) \cdot \mathcal{U}_{I^
			\circ(\gamma^{\min},\gamma)} \otimes 
		\delta_{(\gamma^{\min},\gamma)}(C) \\
		& \quad + \mathbf{1}_{[0,2\pi)\setminus\{\gamma^{\max}\}}(\gamma)\, \mathbf{1}_{I^
			\circ(\gamma,\gamma^{\max})}(\alpha) \cdot \mathcal{U}_{I^
			\circ(\gamma,\gamma^{\max})} \otimes \delta_{(\gamma,\gamma^{\max})}(C).
		\end{align*}
		Observe that $z \in G_\alpha$ implies that $\gamma = \gamma^{\max}$ or $\gamma = \gamma^{\min}$ is only possible if $\gamma=\gamma^{\min} = \gamma^{\max}$.
		Therefore 
		\begin{align*}
		R_{g_\theta^{-1}(\alpha)}(\widetilde{g}_\theta(z),\widetilde{g}_\theta(C))
		&= \mathbf{1}_{[0,2\pi)\setminus\{\gamma^{\min}\}}(\gamma)\,\mathbf{1}_{I^
			\circ(\gamma^{\min},\gamma)}(\alpha) \cdot \mathcal{U}_{I^
			\circ(\gamma^{\min},\gamma)} \otimes 
		\delta_{(\gamma^{\min},\gamma)}(C) \\
		& \quad +  \mathbf{1}_{I^
			\circ(\gamma,\gamma^{\max})}(\alpha) \cdot \mathcal{U}_{I^
			\circ(\gamma,\gamma^{\max})} \otimes 
		\delta_{(\gamma,\gamma^{\max})}(C) = R_\alpha(z,C), 
		\end{align*}
		where the last equality follows by \eqref{eq: repres_R_alpha_on_G_alpha}.
		
		Next we argue for $R_\alpha(z, G_\alpha)= 1$.
		Again from \eqref{eq: repres_R_alpha_on_G_alpha} and the definition of $G_\alpha$ we get
		\begin{align*}
		& R_\alpha((\gamma,\gamma^{\min},\gamma^{\max}), G_\alpha) \\
		& = \mathbf{1}_{I^\circ(\gamma,\gamma^{\max})}(\alpha)  \cdot \mathcal{U}_{I^\circ(\gamma,\gamma^{\max})}
		\left(I^\circ(\gamma,\gamma^{\max}) \setminus \{\alpha\}\right)\\
		&\qquad + \mathbf{1}_{[0,2\pi)\setminus\{\gamma^{\min}\}}(\gamma)\,\mathbf{1}_{I^
			\circ(\gamma^{\min},\gamma)}(\alpha) \cdot \mathcal{U}_{I^
			\circ(\gamma^{\min},\gamma)} \left(I^
		\circ(\gamma^{\min},\gamma) \setminus\{\alpha\}\right)\\
		&= \mathbf{1}_{I^\circ(\gamma,\gamma^{\max})}(\alpha) + \mathbf{1}_{J(\gamma^{\min},\gamma)}(\alpha)=1.
		\qedhere
		\end{align*}
	\end{proof}
	We recall the notion of a tensor product of kernels and probability measures. For $\alpha\in[0,2\pi)$ let $(Z_k)_{k\in\N}$ be the shrinkage Markov chain with transition kernel $R_\alpha$ on $\Lambda_\alpha$ and initial distribution $\mathbb{P}_{Z_1}$ see \eqref{eq: init_distr}. Then, for $k\in\N$ the tensor product measure $\mathbb{P}_{Z_1}\otimes R_\alpha^{\otimes(k-1)}$ on the product space $([0,2\pi)^{3k},\mathcal{B}([0,2\pi)^{3k}))$ is the uniquely determined probability measure that satisfies 
	\begin{align*}
	\mathbb{P}_{Z_1}\otimes R_{\alpha}^{\otimes(k-1)}(A_1\times \dots \times A_k)
	&= \int_{A_1} \int_{A_2} \dots \int_{A_k} R_{\alpha}(z_{k-1},\dint z_k) \dots R_{\alpha}(z_1,\dint z_2) \mathbb{P}_{Z_1}(\dint z_1)\\ 
	&= \mathbb{P}(Z_1\in A_1,\dots,Z_k\in A_k\mid\Theta=\alpha)
	\end{align*}
	for any $A_1,\dots,A_k \in \mathcal{B}([0,2\pi)^{3})$. For $k\geq2$ we exploit the fact that
	\[
	\mathbb{P}_{Z_1}\otimes R_\alpha^{\otimes(k-1)}(A_1\times \dots \times A_k)  
	= \int_{A_1\times \dots \times A_{k-1}} R_\alpha(z_{k-1},A_k)\; \mathbb{P}_{Z_1}\otimes R_\alpha^{\otimes(k-2)}(\dint z_1 \dots \dint z_{k-1}). 
	\]
	With that we add another auxiliary observation.
	\begin{lem} 
		Let $\alpha,\theta \in (0,2\pi]$ and 
		let		
		%		$G_{\alpha}$ as well as 
		$g_\theta$, $\widetilde{g}_\theta$ be as in Lemma~\ref{lem: a_s_R_alpha}. Then, for any $k\in\N$ and  $C_1, \ldots, C_k \in \mathcal{B}([0,2\pi)^3)$ we have
		\begin{equation}\label{eq: gamma-chain under rotation}
		\mathbb{P}_{Z_1} \otimes R^{\otimes(k-1)}_{g_\theta^{-1}(\alpha)} \left(\widetilde{g}_\theta(C_1) \times \ldots \times \widetilde{g}_\theta(C_k)\right) 
		= 	\mathbb{P}_{Z_1} \otimes R^{\otimes(k-1)}_{\alpha}  \left(C_1\times \ldots \times C_k\right).
		\end{equation}
	\end{lem}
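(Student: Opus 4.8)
The plan is to prove \eqref{eq: gamma-chain under rotation} by induction on $k\in\N$, regarding the finite-dimensional distributions of the shrinkage chain as pushforwards under the coordinatewise action $\widetilde{g}_\theta^{\times m}(z_1,\dots,z_m):=(\widetilde{g}_\theta(z_1),\dots,\widetilde{g}_\theta(z_m))$ of the involution $\widetilde{g}_\theta$. Two ingredients drive the argument. First, $g_\theta$ is a measure-preserving bijection of $([0,2\pi),\mathcal{B}([0,2\pi)),\lambda)$ — the composition of a reflection with a rotation on the circle — so each $\widetilde{g}_\theta^{\times m}$ is a bi-measurable involution. Second, Lemma~\ref{lem: a_s_R_alpha} supplies both the invariance $R_\alpha(z,G_\alpha)=1$ for $z\in G_\alpha$ and the intertwining identity $R_{g_\theta^{-1}(\alpha)}(\widetilde{g}_\theta(z),\widetilde{g}_\theta(C))=R_\alpha(z,C)$ for $z\in G_\alpha$. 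As a preliminary I would record that $\mathbb{P}_{Z_1}\otimes R_\alpha^{\otimes(k-1)}$ is concentrated on $G_\alpha^k$: by \eqref{eq: init_distr}, $\mathbb{P}_{Z_1}$ charges only the diagonal points $(\gamma,\gamma,\gamma)$ with $\gamma\neq\alpha$, all of which lie in $G_\alpha$, and $R_\alpha(\cdot,G_\alpha)=1$ propagates this along the chain.

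For the base case $k=1$ one shows $\mathbb{P}_{Z_1}(\widetilde{g}_\theta(C))=\mathbb{P}_{Z_1}(C)$: since $\widetilde{g}_\theta$ is a bijection, $(\gamma,\gamma,\gamma)\in\widetilde{g}_\theta(C)$ if and only if $(g_\theta(\gamma),g_\theta(\gamma),g_\theta(\gamma))\in C$, so by \eqref{eq: init_distr} and the substitution $\beta=g_\theta(\gamma)$, which preserves $\dint\gamma/2\pi$, the two sides agree. For the inductive step, assume \eqref{eq: gamma-chain under rotation} with $k-1$ in place of $k$. Since equality of measures on the generating $\pi$-system of measurable rectangles extends to all of $\mathcal{B}([0,2\pi)^{3(k-1)})$ (Dynkin $\pi$-$\lambda$ argument), the hypothesis is equivalent to the identity of measures $\mathbb{P}_{Z_1}\otimes R_{g_\theta^{-1}(\alpha)}^{\otimes(k-2)}=\bigl(\widetilde{g}_\theta^{\times(k-1)}\bigr)_{*}\bigl(\mathbb{P}_{Z_1}\otimes R_{\alpha}^{\otimes(k-2)}\bigr)$. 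Using the recursive definition of the tensor product,
\begin{align*}
& \mathbb{P}_{Z_1}\otimes R_{g_\theta^{-1}(\alpha)}^{\otimes(k-1)}\bigl(\widetilde{g}_\theta(C_1)\times\dots\times\widetilde{g}_\theta(C_k)\bigr)\\
& \quad = \int_{\widetilde{g}_\theta(C_1)\times\dots\times\widetilde{g}_\theta(C_{k-1})} R_{g_\theta^{-1}(\alpha)}\bigl(z_{k-1},\widetilde{g}_\theta(C_k)\bigr)\; \mathbb{P}_{Z_1}\otimes R_{g_\theta^{-1}(\alpha)}^{\otimes(k-2)}(\dint z_1\cdots\dint z_{k-1}),
\end{align*}
I would apply the change-of-variables formula for the pushforward to rewrite the right-hand side as an integral over $C_1\times\dots\times C_{k-1}$ against $\mathbb{P}_{Z_1}\otimes R_\alpha^{\otimes(k-2)}$ of $R_{g_\theta^{-1}(\alpha)}(\widetilde{g}_\theta(w_{k-1}),\widetilde{g}_\theta(C_k))$, the indicators $\mathbf{1}_{\widetilde{g}_\theta(C_i)}(\widetilde{g}_\theta(w_i))=\mathbf{1}_{C_i}(w_i)$ collapsing the integration domain. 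Because that measure lives on $G_\alpha^{k-1}$, the intertwining relation \eqref{eq: pushforward_coincides} replaces the integrand by $R_\alpha(w_{k-1},C_k)$, and the resulting quantity is by definition $\mathbb{P}_{Z_1}\otimes R_\alpha^{\otimes(k-1)}(C_1\times\dots\times C_k)$, which closes the induction.

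I expect the main obstacle to be careful bookkeeping rather than any conceptual difficulty: one must keep track of the involutions $g_\theta=g_\theta^{-1}$ and $\widetilde{g}_\theta=\widetilde{g}_\theta^{-1}$, remember that $\widetilde{g}_\theta$ interchanges the second and third coordinates (harmless on the diagonal but essential when applying $R_\alpha$), and — most importantly — make sure the preliminary concentration of $\mathbb{P}_{Z_1}\otimes R_\alpha^{\otimes(k-1)}$ on $G_\alpha^k$ is in place before invoking \eqref{eq: pushforward_coincides}, which is valid only on $G_\alpha$. The remaining point requiring attention is the repeated passage between equality of measures on rectangles and equality as measures, together with the attendant change-of-variables identity for nonnegative integrands; this is routine via the $\pi$-$\lambda$ theorem but should be stated explicitly since it is used in both directions.
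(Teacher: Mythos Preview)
Your proposal is correct and follows essentially the same route as the paper's proof: both establish the concentration $\mathbb{P}_{Z_1}\otimes R_\alpha^{\otimes(k-1)}(G_\alpha^k)=1$ as a preliminary, handle the base case via the substitution $\beta=g_\theta(\gamma)$ in \eqref{eq: init_distr}, and close the induction by applying the recursive form of the tensor product together with the intertwining relation \eqref{eq: pushforward_coincides} on $G_\alpha$. The only difference is that you make explicit the $\pi$--$\lambda$ step upgrading the inductive hypothesis from rectangles to the full pushforward identity of measures, which the paper uses tacitly in its change-of-variables line; this is a clarification rather than a departure.
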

	\begin{proof}
		For $G_\alpha$ as defined in Lemma~\ref{lem: a_s_R_alpha} and 
		for any $k\in\N$
		we show by induction that
		\begin{equation}
		\label{eq: 1st_induc}
		\mathbb{P}_{Z_1} \otimes R^{\otimes(k-1)}_{\alpha}  \left(G_\alpha \times \ldots \times G_\alpha \right)=1.
		\end{equation}
		For $k=1$ we have
		\[
		\mathbb{P}_{Z_1}(G_\alpha) = \int_{[0,2\pi)} \mathbf{1}_{G_\alpha}(\gamma,\gamma,\gamma)\; \frac{\dint \gamma}{2\pi} = \mathcal{U}_{[0, 2\pi)}([0,2\pi)\setminus\{\alpha\}) =1.
		\]
		Assume \eqref{eq: 1st_induc} is true for $k$. Then, using from Lemma~\ref{lem: a_s_R_alpha} that $R_\alpha(z_k,G_\alpha)=1$, yields
		\begin{align*}
		\mathbb{P}_{Z_1} \otimes R^{\otimes k}_{\alpha}  \left(G_\alpha \times \ldots \times G_\alpha \right)
		& = \int_{G_\alpha\times \dots \times G_{\alpha}} R(z_{k},G_\alpha)\; \mathbb{P}_{Z_1}\otimes R^{\otimes(k-1)}(\dint z_1 \dots \dint z_{k})\\ 
		& =	\mathbb{P}_{Z_1} \otimes R^{\otimes (k-1)}_{\alpha}  \left(G_\alpha \times \ldots \times G_\alpha \right) =1.
		\end{align*}
		
		We prove now \eqref{eq: gamma-chain under rotation} again by induction.
		Indeed by the change of variables $\widetilde{\gamma} = g_\theta(\gamma)$ we have for all $C_1 \in \mathcal{B}([0,2\pi)^3)$ that
		\begin{align*}
		\mathbb{P}_{Z_1}\left(\widetilde{g}_\theta(C_1)\right) 
		&=  \int_0^{2\pi}\mathbf{1}_{C_1}\left(g_\theta(\gamma), g_\theta(\gamma), g_\theta(\gamma)\right)\,  \frac{\dint \gamma}{2\pi}
		=  \int_0^{2\pi}\mathbf{1}_{C_1}\left(\widetilde{\gamma}, \widetilde{\gamma}, \widetilde{\gamma}\right)\,  \frac{\dint \widetilde{\gamma}}{2\pi}
		= 	\mathbb{P}_{Z_1}\left(C_1\right),
		\end{align*}
		which gives the statement for $k=1$. Assume 
		that \eqref{eq: gamma-chain under rotation} holds for $k \in \mathbb{N}$. Then
		\begin{align*}
		& \mathbb{P}_{Z_1} \otimes R^{\otimes k}_{g_\theta^{-1}(\alpha)}  \left(\widetilde{g}_\theta(C_1) \times \ldots \times \widetilde{g}_\theta(C_{k+1})\right)\\
		= & \int_{\widetilde{g}_\theta(C_1) \times \ldots \times \widetilde{g}_\theta(C_{k})} R_{g_\theta^{-1}(\alpha)}\big(z_k, \widetilde{g}_\theta(C_{k+1})\big) \;
		\mathbb{P}_{Z_1} \otimes R^{\otimes(k-1)}_{g_\theta^{-1}(\alpha)}(\dint z_1\dots \dint z_{k})
		\\
		= &
		\int_{C_1 \times \ldots \times C_{k}} R_{g_\theta^{-1}(\alpha)}\big(\widetilde{g}_\theta(z_k), \widetilde{g}_\theta(C_{k+1})\big) 
		\;\mathbb{P}_{Z_1} \otimes R^{\otimes(k-1)}_{\alpha}(\dint z_1 \dots \dint z_k)\\
		\underset{\eqref{eq: 1st_induc}}{=} &
		\int_{(C_1 \times \ldots \times C_{k})\cap G_\alpha^k} R_{g_\theta^{-1}(\alpha)}\big(\widetilde{g}_\theta(z_k), \widetilde{g}_\theta(C_{k+1})\big) 
		\;\mathbb{P}_{Z_1} \otimes R^{\otimes(k-1)}_{\alpha}(\dint z_1 \dots \dint z_k)\\
		\underset{\eqref{eq: pushforward_coincides}}{=} &
		\int_{(C_1 \times \ldots \times C_{k})\cap G_\alpha^k} R_{\alpha}\big(z_k, C_{k+1}\big) 
		\;\mathbb{P}_{Z_1} \otimes R^{\otimes(k-1)}_{\alpha}(\dint z_1 \dots \dint z_k)
		\underset{\eqref{eq: 1st_induc}}{=} 
		\mathbb{P}_{Z_1} \otimes R^{\otimes k}_{\alpha}  \left(C_1\times \ldots \times C_{k+1}\right),
		\end{align*}
		which finishes the proof.
	\end{proof}
	Now we turn to the proof of Lemma~\ref{lem: pushforward_invariance}.
	\begin{proof}
		We apply \eqref{eq: gamma-chain under rotation} to the definition of $Q_{g_\theta^{-1}(S)}$ and get for all $B \in \mathcal{B}(S)$ that
		\begin{align*}
		&Q_{g_\theta^{-1}(S)}(g_\theta^{-1}(\alpha),g_\theta^{-1}(B)) \\
		&= \sum_{k= 1}^{\infty} \mathbb{P}\left[\Gamma_k \in g_\theta^{-1}(B) \cap g_\theta^{-1}(S), \Gamma_1 \in g_\theta^{-1}(S)^c, \ldots, \Gamma_{k-1} \in g_\theta^{-1}(S)^c \mid \Theta = g_\theta^{-1}(\alpha)\right]\\
		&= \sum_{k=1}^{\infty} \mathbb{P}_{Z_1} \otimes R^{\otimes(k-1)}_{g_\theta^{-1}(\alpha)}  \left(\widetilde{g}_\theta(S^c\times[0,2\pi)^2)^{k-1} \times \widetilde{g}_\theta(S\cap B \times[0,2\pi)^2 )\right) \\
		&
		\underset{\eqref{eq: gamma-chain under rotation} }{
			=} \sum_{k=1}^{\infty} \mathbb{P}_{Z_1} \otimes R^{
			\otimes(k-1)}_{\alpha}  \left((S^c\times[0,2\pi)^2) \times \ldots \times (S^c\times[0,2\pi)^2) \times (S\cap B \times[0,2\pi)^2 )\right) 
		= Q_S(\alpha,B),
		\end{align*}
		which verifies the claim.
\end{proof}

\end{appendix}
%%%%%%%%%%%%%%%%%%%%%%%%%%%%%%%%%%%%%%%%%%%%%%
%% Multiple Appendixes:                     %%
%%%%%%%%%%%%%%%%%%%%%%%%%%%%%%%%%%%%%%%%%%%%%%
%\begin{appendix}
%\section{???}
%
%\section{???}
%
%\end{appendix}

%%%%%%%%%%%%%%%%%%%%%%%%%%%%%%%%%%%%%%%%%%%%%%
%% Support information, if any,             %%
%% should be provided in the                %%
%% Acknowledgements section.                %%
%%%%%%%%%%%%%%%%%%%%%%%%%%%%%%%%%%%%%%%%%%%%%%
\begin{acks}[Acknowledgments]
%\subsection*{Acknowledgements}
 The authors thank Michael Habeck, Philip Sch\"ar and Bj\"orn Sprungk for comments on a preliminary version of the manuscript and fruitful discussions about this topic. Mareike Hasenpflug and Daniel Rudolf gratefully acknowledge support of the DFG within pro\-ject 432680300 -- SFB 1456 subproject B02.
\end{acks}
\bibliographystyle{imsart-nameyear} % Style BST file (imsart-number.bst or imsart-nameyear.bst)
%\bibliography{references.bib}       % Bibliography file (usually '*.bib')

\begin{thebibliography}{34}
	% BibTex style file: imsart-nameyear.bst, 2017-11-03
	% Default style options (sort=1,type=nameyear).
	% Used options (sort=1,type=nameyear).
	
	\bibitem[\protect\citeauthoryear{Beskos et~al.}{2011}]{beskos2011hybrid}
	\begin{barticle}[author]
		\bauthor{\bsnm{Beskos},~\bfnm{A.}\binits{A.}},
		\bauthor{\bsnm{Pinski},~\bfnm{F.~J.}\binits{F.~J.}},
		\bauthor{\bsnm{Sanz-Serna},~\bfnm{J.~M.}\binits{J.~M.}} \AND
		\bauthor{\bsnm{Stuart},~\bfnm{A.~M.}\binits{A.~M.}}
		(\byear{2011}).
		\btitle{Hybrid {M}onte {C}arlo on {H}ilbert spaces}.
		\bjournal{Stochastic Process. Appl.}
		\bvolume{121}
		\bpages{2201--2230}.
		\bdoi{10.1016/j.spa.2011.06.003}
	\end{barticle}
	\endbibitem
	
	\bibitem[\protect\citeauthoryear{Beskos et~al.}{2017}]{beskos2017geometric}
	\begin{barticle}[author]
		\bauthor{\bsnm{Beskos},~\bfnm{Alexandros}\binits{A.}},
		\bauthor{\bsnm{Girolami},~\bfnm{Mark}\binits{M.}},
		\bauthor{\bsnm{Lan},~\bfnm{Shiwei}\binits{S.}},
		\bauthor{\bsnm{Farrell},~\bfnm{Patrick~E.}\binits{P.~E.}} \AND
		\bauthor{\bsnm{Stuart},~\bfnm{Andrew~M.}\binits{A.~M.}}
		(\byear{2017}).
		\btitle{Geometric {MCMC} for infinite-dimensional inverse problems}.
		\bjournal{J. Comput. Phys.}
		\bvolume{335}
		\bpages{327--351}.
		\bdoi{10.1016/j.jcp.2016.12.041}
	\end{barticle}
	\endbibitem
	
	\bibitem[\protect\citeauthoryear{Cotter et~al.}{2013}]{cotter2013mcmc}
	\begin{barticle}[author]
		\bauthor{\bsnm{Cotter},~\bfnm{Simon~L}\binits{S.~L.}},
		\bauthor{\bsnm{Roberts},~\bfnm{Gareth~O}\binits{G.~O.}},
		\bauthor{\bsnm{Stuart},~\bfnm{Andrew~M}\binits{A.~M.}} \AND
		\bauthor{\bsnm{White},~\bfnm{David}\binits{D.}}
		(\byear{2013}).
		\btitle{{MCMC} methods for functions: modifying old algorithms to make them
			faster}.
		\bjournal{Statist. Sci.}
		\bpages{424--446}.
	    \bdoi{10.1214/13-STS421}
	\end{barticle}
	\endbibitem
	
	\bibitem[\protect\citeauthoryear{Cui, Law and Marzouk}{2016}]{cui2016dimension}
	\begin{barticle}[author]
		\bauthor{\bsnm{Cui},~\bfnm{Tiangang}\binits{T.}},
		\bauthor{\bsnm{Law},~\bfnm{Kody J.~H.}\binits{K.~J.~H.}} \AND
		\bauthor{\bsnm{Marzouk},~\bfnm{Youssef~M.}\binits{Y.~M.}}
		(\byear{2016}).
		\btitle{Dimension-independent likelihood-informed {MCMC}}.
		\bjournal{J. Comput. Phys.}
		\bvolume{304}
		\bpages{109--137}.
		\bdoi{j.jcp.2015.10.008}
	\end{barticle}
	\endbibitem
	
	\bibitem[\protect\citeauthoryear{Da~Prato and Zabczyk}{2002}]{da2002second}
	\begin{bbook}[author]
		\bauthor{\bsnm{Da~Prato},~\bfnm{Giuseppe}\binits{G.}} \AND
		\bauthor{\bsnm{Zabczyk},~\bfnm{Jerzy}\binits{J.}}
		(\byear{2002}).
		\btitle{Second Order Partial Differential Equations in Hilbert Spaces}
		\bvolume{293}.
		\bpublisher{Cambridge University Press}, \baddress{Cambridge}.
		\bdoi{10.1017/CBO9780511543210}
	\end{bbook}
	\endbibitem
	
	\bibitem[\protect\citeauthoryear{Douc et~al.}{2018}]{douc2018markov}
	\begin{bbook}[author]
		\bauthor{\bsnm{Douc},~\bfnm{Randal}\binits{R.}},
		\bauthor{\bsnm{Moulines},~\bfnm{Eric}\binits{E.}},
		\bauthor{\bsnm{Priouret},~\bfnm{Pierre}\binits{P.}} \AND
		\bauthor{\bsnm{Soulier},~\bfnm{Philippe}\binits{P.}}
		(\byear{2018}).
		\btitle{{M}arkov Chains}.
		\bpublisher{Springer}, \baddress{Cham}.
		\bdoi{10.1007/978-3-319-97704-1}
	\end{bbook}
	\endbibitem
	
	\bibitem[\protect\citeauthoryear{Gessner, Kanjilal and
		Hennig}{2020}]{gessner2022integrals}
	\begin{binproceedings}[author]
		\bauthor{\bsnm{Gessner},~\bfnm{Alexandra}\binits{A.}},
		\bauthor{\bsnm{Kanjilal},~\bfnm{Oindrila}\binits{O.}} \AND
		\bauthor{\bsnm{Hennig},~\bfnm{Philipp}\binits{P.}}
		(\byear{2020}).
		\btitle{Integrals over {G}aussians under Linear Domain Constraints}.
		In \bbooktitle{Proceedings of the 23rd International Conference on Artificial
			Intelligence and Statistics}
		\bpages{2764--2774}.
	\end{binproceedings}
	\endbibitem
	
	\bibitem[\protect\citeauthoryear{Grenioux et~al.}{2023}]{grenioux23a}
	\begin{binproceedings}[author]
		\bauthor{\bsnm{Grenioux},~\bfnm{Louis}\binits{L.}},
		\bauthor{\bsnm{O.~Durmus},~\bfnm{Alain}\binits{A.}},
		\bauthor{\bsnm{Moulines},~\bfnm{Eric}\binits{E.}} \AND
		\bauthor{\bsnm{Gabri\'{e}},~\bfnm{Marylou}\binits{M.}}
		(\byear{2023}).
		\btitle{On Sampling with Approximate Transport Maps}.
		In \bbooktitle{Proceedings of the 40th International Conference on Machine
			Learning, PMLR}
		\bvolume{202}
		\bpages{11698--11733}.
	\end{binproceedings}
	\endbibitem
	
	\bibitem[\protect\citeauthoryear{Habeck, Rudolf and
		Sprungk}{2020}]{habeck2020stability}
	\begin{barticle}[author]
		\bauthor{\bsnm{Habeck},~\bfnm{Michael}\binits{M.}},
		\bauthor{\bsnm{Rudolf},~\bfnm{Daniel}\binits{D.}} \AND
		\bauthor{\bsnm{Sprungk},~\bfnm{Bj{\"o}rn}\binits{B.}}
		(\byear{2020}).
		\btitle{Stability of doubly-intractable distributions}.
		\bjournal{Electron. Commun. Probab.}
		\bvolume{25}
		\bpages{1--13}.
		\bdoi{10.1214/20-ECP341}
	\end{barticle}
	\endbibitem
	
	\bibitem[\protect\citeauthoryear{Habeck et~al.}{2023}]{habeck2023geodesic}
	\begin{bmisc}[author]
		\bauthor{\bsnm{Habeck},~\bfnm{Michael}\binits{M.}},
		\bauthor{\bsnm{Hasenpflug},~\bfnm{Mareike}\binits{M.}},
		\bauthor{\bsnm{Kodgirwar},~\bfnm{Shantanu}\binits{S.}} \AND
		\bauthor{\bsnm{Rudolf},~\bfnm{Daniel}\binits{D.}}
		(\byear{2023}).
		\btitle{Geodesic slice sampling on the sphere}.
		\bhowpublished{arXiv:2301.08056}.
		\bnote{Preprint}.
		\bdoi{10.48550/arXiv.2301.08056}
	\end{bmisc}
	\endbibitem
	
	\bibitem[\protect\citeauthoryear{Jones}{2004}]{Jo04}
	\begin{barticle}[author]
		\bauthor{\bsnm{Jones},~\bfnm{G.}\binits{G.}}
		(\byear{2004}).
		\btitle{On the {M}arkov chain central limit theorem}.
		\bjournal{Probab. Surv.}
		\bvolume{1}
		\bpages{299--320}.
		\bdoi{10.1214/154957804100000051}
	\end{barticle}
	\endbibitem
	
	\bibitem[\protect\citeauthoryear{Kallenberg}{2002}]{Ka02}
	\begin{bbook}[author]
		\bauthor{\bsnm{Kallenberg},~\bfnm{O.}\binits{O.}}
		(\byear{2002}).
		\btitle{Foundations of Modern Probability},
		\bedition{{S}econd} ed.
		\bseries{Probability and its Applications}.
		\bpublisher{Springer-Verlag}, \baddress{New York}.
		\bdoi{10.1007/978-3-030-61871-1}
	\end{bbook}
	\endbibitem
	
	\bibitem[\protect\citeauthoryear{{\L}atuszy{\'n}ski and
		Rudolf}{2014}]{latuszynski2014convergence}
	\begin{barticle}[author]
		\bauthor{\bsnm{{\L}atuszy{\'n}ski},~\bfnm{Krzysztof}\binits{K.}} \AND
		\bauthor{\bsnm{Rudolf},~\bfnm{Daniel}\binits{D.}}
		(\byear{2024}).
		\btitle{Convergence of hybrid slice sampling via spectral gap}.
		\bjournal{J. Appl. Probab.}
		\bnote{Accepted}.
		\bdoi{10.48550/arXiv.1409.2709}
	\end{barticle}
	\endbibitem
	
	\bibitem[\protect\citeauthoryear{Law}{2014}]{law2014proposal}
	\begin{barticle}[author]
		\bauthor{\bsnm{Law},~\bfnm{K.~J.~H.}\binits{K.~J.~H.}}
		(\byear{2014}).
		\btitle{Proposals which speed up function-space {MCMC}}.
		\bjournal{J. Comput. Appl. Math.}
		\bvolume{262}
		\bpages{127--138}.
		\bdoi{10.1016/j.cam.2013.07.026}
	\end{barticle}
	\endbibitem
	
	\bibitem[\protect\citeauthoryear{Lie et~al.}{2023}]{lie2021dimension}
	\begin{barticle}[author]
		\bauthor{\bsnm{Lie},~\bfnm{Han~Cheng}\binits{H.~C.}},
		\bauthor{\bsnm{Rudolf},~\bfnm{Daniel}\binits{D.}},
		\bauthor{\bsnm{Sprungk},~\bfnm{Bj{\"o}rn}\binits{B.}} \AND
		\bauthor{\bsnm{Sullivan},~\bfnm{Timothy~John}\binits{T.~J.}}
		(\byear{2023}).
		\btitle{Dimension-independent {M}arkov chain {M}onte {C}arlo on the sphere}.
		\bjournal{Scand. J. Stat.}
		\bvolume{50}
	    \bpages{1818--1858}.
	    \bdoi{10.1111/sjos.12653}
	\end{barticle}
	\endbibitem
	
	\bibitem[\protect\citeauthoryear{Medina-Aguayo, Rudolf and
		Schweizer}{2020}]{medina2020perturbation}
	\begin{barticle}[author]
		\bauthor{\bsnm{Medina-Aguayo},~\bfnm{Felipe}\binits{F.}},
		\bauthor{\bsnm{Rudolf},~\bfnm{Daniel}\binits{D.}} \AND
		\bauthor{\bsnm{Schweizer},~\bfnm{Nikolaus}\binits{N.}}
		(\byear{2020}).
		\btitle{Perturbation bounds for {Monte Carlo} within {Metropolis} via
			restricted approximations}.
		\bjournal{Stochastic Process. Appl.}
		\bvolume{130}
		\bpages{2200--2227}.
		\bdoi{10.1016/j.spa.2019.06.015}
	\end{barticle}
	\endbibitem
	
	\bibitem[\protect\citeauthoryear{Meyn and Tweedie}{2009}]{meyn2009markov}
	\begin{bbook}[author]
		\bauthor{\bsnm{Meyn},~\bfnm{Sean}\binits{S.}} \AND
		\bauthor{\bsnm{Tweedie},~\bfnm{Richard~L}\binits{R.~L.}}
		(\byear{2009}).
		\btitle{Markov Chains and Stochastic Stability}.
		\bpublisher{Cambridge University Press}, \baddress{Cambridge}.
		\bdoi{10.1007/978-1-4471-3267-7}
	\end{bbook}
	\endbibitem
	
	\bibitem[\protect\citeauthoryear{Murray, Adams and MacKay}{2010}]{MuAdMa10}
	\begin{binproceedings}[author]
		\bauthor{\bsnm{Murray},~\bfnm{Iain}\binits{I.}},
		\bauthor{\bsnm{Adams},~\bfnm{Ryan~Prescott}\binits{R.~P.}} \AND
		\bauthor{\bsnm{MacKay},~\bfnm{David J.~C.}\binits{D.~J.~C.}}
		(\byear{2010}).
		\btitle{Elliptical slice sampling}.
		In \bbooktitle{The Proceedings of the 13th International Conference on
			Artificial Intelligence and Statistics}
		\bpages{541--548}.
	\end{binproceedings}
	\endbibitem
	
	\bibitem[\protect\citeauthoryear{Murray and Graham}{2016}]{murray2016pseudo}
	\begin{binproceedings}[author]
		\bauthor{\bsnm{Murray},~\bfnm{Iain}\binits{I.}} \AND
		\bauthor{\bsnm{Graham},~\bfnm{Matthew}\binits{M.}}
		(\byear{2016}).
		\btitle{Pseudo-marginal slice sampling}.
		In \bbooktitle{The Proceedings of the 19th International Conference on
			Artificial Intelligence and Statistics}
		\bpages{911--919}.
	\end{binproceedings}
	\endbibitem
	
	\bibitem[\protect\citeauthoryear{Natarovskii, Rudolf and
		Sprungk}{2021a}]{pmlr-v139-natarovskii21a}
	\begin{binproceedings}[author]
		\bauthor{\bsnm{Natarovskii},~\bfnm{Viacheslav}\binits{V.}},
		\bauthor{\bsnm{Rudolf},~\bfnm{Daniel}\binits{D.}} \AND
		\bauthor{\bsnm{Sprungk},~\bfnm{Bj{\"o}rn}\binits{B.}}
		(\byear{2021}a).
		\btitle{Geometric convergence of elliptical slice sampling}.
		In \bbooktitle{Proceedings of the 38th International Conference on Machine
			Learning, PMLR}
		\bpages{7969--7978}.
	\end{binproceedings}
	\endbibitem
	
	\bibitem[\protect\citeauthoryear{Natarovskii, Rudolf and
		Sprungk}{2021b}]{natarovskii2021quantitative}
	\begin{barticle}[author]
		\bauthor{\bsnm{Natarovskii},~\bfnm{Viacheslav}\binits{V.}},
		\bauthor{\bsnm{Rudolf},~\bfnm{Daniel}\binits{D.}} \AND
		\bauthor{\bsnm{Sprungk},~\bfnm{Bj{\"o}rn}\binits{B.}}
		(\byear{2021}b).
		\btitle{Quantitative spectral gap estimate and {W}asserstein contraction of
			simple slice sampling}.
		\bjournal{Ann. Appl. Probab.}
		\bvolume{31}
		\bpages{806--825}.
		\bdoi{10.1214/20-AAP1605}
	\end{barticle}
	\endbibitem
	
	\bibitem[\protect\citeauthoryear{Neal}{1999}]{neal1999regression}
	\begin{binproceedings}[author]
		\bauthor{\bsnm{Neal},~\bfnm{Radford~M}\binits{R.~M.}}
		(\byear{1999}).
		\btitle{Regression and classification using {G}aussian process priors}.
		In \bbooktitle{Bayesian statistics 6. Proceedings of the 6th Valencia
			international meeting}
		\bpages{475-501}.
		\bdoi{10.1093/oso/9780198504856.003.0021}
	\end{binproceedings}
	\endbibitem
	
	\bibitem[\protect\citeauthoryear{Neal}{2003}]{neal2003slice}
	\begin{barticle}[author]
		\bauthor{\bsnm{Neal},~\bfnm{Radford~M}\binits{R.~M.}}
		(\byear{2003}).
		\btitle{Slice sampling}.
		\bjournal{Ann. Statist.}
		\bvolume{31}
		\bpages{705--767}.
		\bdoi{10.1214/aos/1056562461}
	\end{barticle}
	\endbibitem
	
	\bibitem[\protect\citeauthoryear{Nishihara, Murray and
		Adams}{2014}]{nishihara2014parallel}
	\begin{barticle}[author]
		\bauthor{\bsnm{Nishihara},~\bfnm{R.}\binits{R.}},
		\bauthor{\bsnm{Murray},~\bfnm{I.}\binits{I.}} \AND
		\bauthor{\bsnm{Adams},~\bfnm{R.~P.}\binits{R.~P.}}
		(\byear{2014}).
		\btitle{Parallel {MCMC} with generalized elliptical slice sampling}.
		\bjournal{J. Mach. Learn. Res.}
		\bvolume{15}
		\bpages{2087--2112}.
		\bdoi{10.48550/arXiv.1210.7477}
	\end{barticle}
	\endbibitem
	
	\bibitem[\protect\citeauthoryear{Ottobre et~al.}{2016}]{ottobre2016function}
	\begin{barticle}[author]
		\bauthor{\bsnm{Ottobre},~\bfnm{Michela}\binits{M.}},
		\bauthor{\bsnm{Pillai},~\bfnm{Natesh~S.}\binits{N.~S.}},
		\bauthor{\bsnm{Pinski},~\bfnm{Frank~J.}\binits{F.~J.}} \AND
		\bauthor{\bsnm{Stuart},~\bfnm{Andrew~M.}\binits{A.~M.}}
		(\byear{2016}).
		\btitle{A function space {HMC} algorithm with second order {L}angevin diffusion
			limit}.
		\bjournal{Bernoulli}
		\bvolume{22}
		\bpages{60--106}.
		\bdoi{10.3150/14-BEJ621}
	\end{barticle}
	\endbibitem
	
	\bibitem[\protect\citeauthoryear{Pinski et~al.}{2015}]{pinski2015algorithms}
	\begin{barticle}[author]
		\bauthor{\bsnm{Pinski},~\bfnm{F.~J.}\binits{F.~J.}},
		\bauthor{\bsnm{Simpson},~\bfnm{G.}\binits{G.}},
		\bauthor{\bsnm{Stuart},~\bfnm{A.~M.}\binits{A.~M.}} \AND
		\bauthor{\bsnm{Weber},~\bfnm{H.}\binits{H.}}
		(\byear{2015}).
		\btitle{Algorithms for {K}ullback-{L}eibler approximation of probability
			measures in infinite dimensions}.
		\bjournal{SIAM J. Sci. Comput.}
		\bvolume{37}
		\bpages{a2733--a2757}.
		\bdoi{10.1137/14098171X}
	\end{barticle}
	\endbibitem
	
	\bibitem[\protect\citeauthoryear{Ray, Pati and
		Bhattacharya}{2020}]{ray2020efficient}
	\begin{barticle}[author]
		\bauthor{\bsnm{Ray},~\bfnm{Pallavi}\binits{P.}},
		\bauthor{\bsnm{Pati},~\bfnm{Debdeep}\binits{D.}} \AND
		\bauthor{\bsnm{Bhattacharya},~\bfnm{Anirban}\binits{A.}}
		(\byear{2020}).
		\btitle{Efficient {B}ayesian shape-restricted function estimation with
			constrained {G}aussian process priors}.
		\bjournal{Stat. Comput.}
		\bvolume{30}
		\bpages{839--853}.
		\bdoi{10.1007/s11222-020-09922-0}
	\end{barticle}
	\endbibitem
	
	\bibitem[\protect\citeauthoryear{Rudolf}{2012}]{rudolf2012explicit}
	\begin{barticle}[author]
		\bauthor{\bsnm{Rudolf},~\bfnm{Daniel}\binits{D.}}
		(\byear{2012}).
		\btitle{Explicit error bounds for {Markov} chain {Monte Carlo}}.
		\bjournal{Dissertationes Math.}
		\bvolume{485}
		\bpages{1--93}.
		\bdoi{10.4064/dm485-0-1}
	\end{barticle}
	\endbibitem
	
	\bibitem[\protect\citeauthoryear{Rudolf and
		Schweizer}{2018}]{rudolf2018perturbation}
	\begin{barticle}[author]
		\bauthor{\bsnm{Rudolf},~\bfnm{Daniel}\binits{D.}} \AND
		\bauthor{\bsnm{Schweizer},~\bfnm{Nikolaus}\binits{N.}}
		(\byear{2018}).
		\btitle{{Perturbation theory for Markov chains via Wasserstein distance}}.
		\bjournal{Bernoulli}
		\bvolume{24}
		\bpages{2610--2639}.
		\bdoi{10.3150/17-BEJ938}
	\end{barticle}
	\endbibitem
	
	\bibitem[\protect\citeauthoryear{Rudolf and
		Sprungk}{2018}]{rudolf2018generalization}
	\begin{barticle}[author]
		\bauthor{\bsnm{Rudolf},~\bfnm{Daniel}\binits{D.}} \AND
		\bauthor{\bsnm{Sprungk},~\bfnm{Bj{\"o}rn}\binits{B.}}
		(\byear{2018}).
		\btitle{On a generalization of the preconditioned {Crank-Nicolson Metropolis}
			algorithm}.
		\bjournal{Found. Comput. Math.}
		\bvolume{18}
		\bpages{309--343}.
		\bdoi{10.1007/s10208-016-9340-x}
	\end{barticle}
	\endbibitem
	
	\bibitem[\protect\citeauthoryear{Rudolf and Sprungk}{2022}]{rudolf2022robust}
	\begin{bmisc}[author]
		\bauthor{\bsnm{Rudolf},~\bfnm{Daniel}\binits{D.}} \AND
		\bauthor{\bsnm{Sprungk},~\bfnm{Bj{\"o}rn}\binits{B.}}
		(\byear{2022}).
		\btitle{{Robust random walk-like Metropolis-Hastings algorithms for
				concentrating posteriors}}.
		\bhowpublished{arXiv:2202.12127}.
		\bnote{Preprint}.
		\bdoi{10.48550/arXiv.2202.12127}
	\end{bmisc}
	\endbibitem
	
	\bibitem[\protect\citeauthoryear{Sch\"{a}r, Habeck and
		Rudolf}{2023}]{schaer2023gibbsian}
	\begin{binproceedings}[author]
		\bauthor{\bsnm{Sch\"{a}r},~\bfnm{Philip}\binits{P.}},
		\bauthor{\bsnm{Habeck},~\bfnm{Michael}\binits{M.}} \AND
		\bauthor{\bsnm{Rudolf},~\bfnm{Daniel}\binits{D.}}
		(\byear{2023}).
		\btitle{Gibbsian polar slice sampling}.
		In \bbooktitle{Proceedings of the 40th International Conference on Machine
			Learning}
		\bpages{30204--30223}.
	\end{binproceedings}
	\endbibitem
	
	\bibitem[\protect\citeauthoryear{Sprungk}{2020}]{sprungk2020local}
	\begin{barticle}[author]
		\bauthor{\bsnm{Sprungk},~\bfnm{Bj{\"o}rn}\binits{B.}}
		(\byear{2020}).
		\btitle{{On the local Lipschitz stability of Bayesian inverse problems}}.
		\bjournal{Inverse Probl.}
		\bvolume{36}
		\bpages{31 pp.}
		\bdoi{10.1088/1361-6420/ab6f43}
	\end{barticle}
	\endbibitem
	
	\bibitem[\protect\citeauthoryear{Tierney}{1994}]{Ti94}
	\begin{barticle}[author]
		\bauthor{\bsnm{Tierney},~\bfnm{Luke}\binits{L.}}
		(\byear{1994}).
		\btitle{Markov Chains for Exploring Posterior Distributions}.
		\bjournal{Ann. Statist.}
		\bvolume{22}
		\bpages{1701--1728}.
		\bdoi{10.1214/aos/1176325750}
	\end{barticle}
	\endbibitem
	
\end{thebibliography}

%% or include bibliography directly:
% \begin{thebibliography}{}
% \bibitem[\protect\citeauthoryear{???}{???}]{b1}
% \end{thebibliography}

\end{document}